\documentclass[11pt,reqno]{amsart}

\usepackage[T1]{fontenc}
\usepackage{amsmath, amssymb, amsthm, amsfonts}
\usepackage{mathtools}
\usepackage[dvipsnames]{xcolor}

\definecolor{DartmouthGreen}{HTML}{00693E}
\definecolor{DartmouthDarkGreen}{HTML}{004B23}
\definecolor{DartmouthLightGreen}{HTML}{4BAF7A}

\usepackage[
  colorlinks=true,
  hyperindex,
  linkcolor=DartmouthDarkGreen,
  citecolor=DartmouthGreen,
  urlcolor=DartmouthLightGreen,
  pagebackref=true
]{hyperref}

\usepackage{fullpage}
\usepackage{tikz,tikz-cd}
\usetikzlibrary{graphs, graphs.standard,positioning}

\usepackage{colortbl, array,booktabs,tabularx}
\usepackage{enumerate}
\usepackage[shortlabels]{enumitem}

\usepackage{calrsfs}
\DeclareMathAlphabet{\pazocal}{OMS}{zplm}{m}{n}
\usepackage{newcent,fouriernc}

\newtheorem{theorem}{Theorem}[section]
\newtheorem{lemma}[theorem]{Lemma}

\newtheorem{corollary}[theorem]{Corollary}
\newtheorem{proposition}[theorem]{Proposition}
\newtheorem{conjecture}[theorem]{Conjecture}

\theoremstyle{definition}
\newtheorem{definition}[theorem]{Definition}
\newtheorem{example}[theorem]{Example}

\newtheorem{remark}[theorem]{Remark}

\newtheorem{manualtheoreminner}{Theorem}
\newenvironment{manualtheorem}[1]{%
  \IfBlankTF{#1}
    {\renewcommand{\themanualtheoreminner}{\unskip}}
    {\renewcommand\themanualtheoreminner{#1}}%
  \manualtheoreminner
}{\endmanualtheoreminner}

\newtheorem{manualcorinner}{Corollary}
\newenvironment{manualcorollary}[1]{%
  \IfBlankTF{#1}
    {\renewcommand{\themanualcorinner}{\unskip}}
    {\renewcommand\themanualcorinner{#1}}%
  \manualcorinner
}{\endmanualcorinner}

\newtheorem{manualconjinner}{Conjecture}

\newenvironment{manualconjecture}[1]{%
  \IfBlankTF{#1}
    {\renewcommand{\themanualconjinner}{\unskip}}
    {\renewcommand{\themanualconjinner}{#1}}%
  \manualconjinner
}{\endmanualconjinner}

\newcommand{\bP}{\mathbf{P}}

\newcommand{\FF}{\mathbb{F}}

\newcommand{\QQ}{\mathbb{Q}}
\newcommand{\RR}{\mathbb{R}}

\newcommand{\ZZ}{\mathbb{Z}}

\newcommand{\cB}{\mathcal{B}}

\newcommand{\cI}{\mathcal{I}}

\newcommand{\cM}{\mathcal{M}}

\newcommand{\cR}{\mathcal{R}}

\newcommand{\sM}{\mathsf{M}}

\newcommand{\sQ}{\mathsf{Q}}
\newcommand{\sP}{\mathsf{P}}

\newcommand{\GC}{\mathcal{GC}}
\newcommand{\MC}{\mathcal{M}}

\DeclareMathOperator{\Aut}{Aut}
\DeclareMathOperator{\id}{id}
\DeclareMathOperator{\Span}{Span}
\DeclareMathOperator{\Out}{Out}

\DeclareMathOperator{\sgn}{sgn}

\DeclareMathOperator{\img}{img}
\DeclareMathOperator{\Sym}{Sym}

\DeclareMathOperator{\rk}{rk}
\DeclareMathOperator{\nul}{nul}

\DeclareMathOperator{\del}{del}
\DeclareMathOperator{\con}{con}
\DeclareMathOperator{\tot}{tot}
\DeclareMathOperator{\Tot}{Tot}

\DeclareMathOperator{\lp}{lp}
\DeclareMathOperator{\clp}{clp}
\DeclareMathOperator{\cont}{cont}
\DeclareMathOperator{\disc}{disc}

\DeclareMathOperator{\reg}{\textbf{reg}}
\DeclareMathOperator{\lpl}{\textbf{lpl}}
\DeclareMathOperator{\colpl}{\textbf{clpl}}
\DeclareMathOperator{\simp}{\textbf{simp}}
\DeclareMathOperator{\cosimp}{\textbf{csimp}}
\DeclareMathOperator{\grph}{\textbf{grph}}
\DeclareMathOperator{\cogrph}{\textbf{cgrph}}
\DeclareMathOperator{\gr}{gr}
\DeclareMathOperator{\GL}{GL}

\makeatletter
\newcommand*{\tp}{%
  {\mathpalette\@transpose{}}%
}
\newcommand*{\@transpose}[2]{%
  \raisebox{\depth}{$\m@th#1\intercal$}%
}
\makeatother

\newcommand{\E}{\cellcolor{blue!30}}
 
\newcommand{\matroidtable}[1]{%
\begin{tikzpicture}
\node[inner sep=0pt] (tbl) {%
\renewcommand{\arraystretch}{1.5}%
\setlength{\tabcolsep}{6pt}%
\begin{tabular}{r | *{9}{>{\centering\arraybackslash}m{2.2em}} >{\centering\arraybackslash}m{5.2em}}
#1
\end{tabular}%
};
\node[left=10pt of tbl.west, rotate=90, anchor=center, font=\itshape] {$r$};
\node[below=6pt of tbl.south, anchor=center, font=\itshape] {$n$};
\end{tikzpicture}%
}
\title{Explorations of Matroid Complexes}

\author[J. Bruce]{Juliette Bruce}
\address{Department of Mathematics, Dartmouth College, Hanover, NH 03755}
\email{\href{mailto:juliette.bruce@dartmouth.edu}{juliette.bruce@dartmouth.edu}}
\urladdr{\url{https://www.juliettebruce.xyz}}

\author[J. Bucciarelli]{Jacob Bucciarelli}
\address{Department of Mathematics, Kansas State University, Manhattan, KS 66506}
\email{\href{mailto:jbucciarelli@ksu.edu}{jbucciarelli@ksu.edu}}
\urladdr{}

\author[B. Zacovic]{Bailee Zacovic}
\address{Department of Mathematics, University of Michigan, Ann Arbor, MI 48109}
\email{\href{mailto:bzacovic@umich.edu}{bzacovic@umich.edu}}
\urladdr{\url{https://bzacovic.github.io/Bailee-Zacovic/}}

\begin{document}

%%%%%%%%%%%%%%%%%%%%%%%%%%%%%%%%%%%%%%%%%%%%%%%%%%%
%%%%%%%%%%%%%%%%%%%%%%%%%%%%%%%%%%%%%%%%%%%%%%%%%%%

\begin{abstract}
Motivated by Kontsevich's graph complexes, this paper gives a systematic study of matroid complexes. We construct deletion and contraction bicomplexes on the vector space spanned by matroid classes equipped with ground-set orientations, organizing the several naturally arising variants into a single unified framework. We show that direct sum and restriction-contraction make this space into a connected graded Hopf algebra extending Schmitt's matroid Hopf algebra, and use the resulting dg-algebra structure to prove broad acyclicity results. We compute the total, simple, loopless, regular, binary, and ternary matroid complexes through ground-set size $9$, and the connected quotient of the simple loopless regular complex through ground-set size $15$. These computations detect nontrivial homology and lead to a conjectural description in terms of odd-wheel matroids.
\end{abstract}

\maketitle
\setcounter{tocdepth}{1}
\tableofcontents

%%%%%%%%%%%%%%%%%%%%%%%%%%%%%%%%%%%%%%%%%%%%%%%%%%%
\section{Introduction}
%%%%%%%%%%%%%%%%%%%%%%%%%%%%%%%%%%%%%%%%%%%%%%%%%%%

Kontsevich's graph complexes have become a forum for combinatorics, topology, geometry, and mathematical physics. Originally introduced as chain complexes spanned by graphs modulo natural symmetries \cite{kontsevich93,kontsevich94}, they have exhibited deep relations to a wide range of areas: the Grothendieck--Teichm\"uller Lie algebra \cite{willwacher15}, the cohomology of moduli spaces \cite{CGP21,brown21,CFGP23,payneWillwacher24}, the homology of $\Out(F_n)$ and the topology of Culler--Vogtmann's Outer space \cite{cullerVogtmann86,borinskyVogtmann20,borinskyVogtmann23}, the cohomology of arithmetic groups \cite{brownHuPanzer24,brown25}, and quantum field theory via Feynman integrals/diagrams \cite{willwacher18,borinskyVogtmann22,hbHu25,Z19}. At the same time, large-scale computations have proven essential \cite{DBNBM01,brunWillwacher24,willwacher25}.  It is therefore natural to ask whether analogous complexes built from other geometric-combinatorial objects display comparable structure.
  
Matroids have hovered at the edge of this story. Every graph determines a matroid via its cycle structure, and matroids have deletion and contraction operations analogous to those used to define graph complexes. Indeed, Alekseyevskaya, Borovik, Gelfand, and White \cite{ABGW00} introduced \emph{matroid complexes} as a direct generalization of Kontsevich's graph complexes already in the late 1990s, replacing graphs by matroids and using matroid deletion and contraction to define differentials. However, while graph complexes subsequently attracted intense study, matroid complexes received comparatively little attention. This gap, together with the appearance of a related chain complex of simple regular matroids in the study of the cohomology of the moduli space of principally polarized abelian varieties (equivalently, the cohomology of the symplectic group) \cite{BBCMMW24}, motivates the systematic study undertaken here.

We work with the $\QQ$-vector space $\MC_\bullet$ generated by matroid isomorphism classes equipped with ground-set orientations, graded by ground-set size. Using deletion and contraction, we construct four differentials $\partial_{\del}$, $\partial_{\clp}$, $\partial_{\con}$, $\partial_{\lp}$ on $\MC_\bullet$ (the deletion of non-coloops, the deletion of coloops, the contraction of non-loops, and the contraction of loops) and organize them into two bicomplexes: the deletion bicomplex $(\MC_{\bullet,\bullet},\partial_{\del},\partial_{\clp})$ and the contraction bicomplex $(\MC_{\bullet,\bullet},\partial_{\lp},\partial_{\con})$, related by matroid duality. These bicomplexes recover the variants appearing in \cite{ABGW00} and \cite{BBCMMW24} via totalization, associated graded pieces of the rank and nullity filtrations, and restriction to combinatorial subclasses such as simple, loopless, regular, binary, and ternary matroids. They also expose useful algebraic structure on $\MC_\bullet$: direct sum gives a product, restriction-contraction gives a coproduct, and the interaction of these operations with the differentials leads to strong formal consequences.
  
A theme that runs throughout much of the literature on graph complexes is that these complexes and their homology carry rich algebraic structure, e.g., Lie/Hopf structures, filtrations by loop and edge number, etc.  These structures have frequently been leveraged to great effect, playing an essential role in organizing computations, forcing structural results, and detecting nontrivial homology. For example, the zeroth homology of the (commutative) graph complex is isomorphic to the Grothendieck–Teichm\"{u}ller Lie algebra \cite{willwacher15}, and this in turn is a key input for showing the dimensions of certain cohomology groups of the moduli space of genus $g$ curves grow exponentially in $g$. With this motivation we prove that Schmitt's Hopf algebra of matroids \cite{schmitt94} can be extended to $\MC_{\bullet}$ in a way that interacts meaningfully with the differentials. 
 
\begin{manualtheorem}{\ref{thm:hopf-algebra}}
  The direct sum product $\star$ and the restriction-contraction coproduct $\Delta$ make $\MC_\bullet$ into a connected graded Hopf algebra, with unit $[\varnothing,1]$ and counit $\epsilon$, where $\epsilon([\varnothing,1])=1$ and is zero otherwise. Further, each of the differentials is a graded derivation of $\star$.
  \end{manualtheorem}

 The Hopf algebra is not, however, a dg-Hopf algebra for these differentials: the differentials are not two-sided coderivations for $\Delta$. Instead, deletion-type differentials are right coderivations and contraction-type differentials are left coderivations. Despite this, the Hopf algebra structure -- especially its underlying dg algebra -- still has immediate consequences for homology. If a matroid complex contains a class whose boundary is the unit, one obtains a contracting homotopy implying the complex is acyclic. For the deletion and contraction differentials such classes can be taken to be the classes of $U_{0,1}$ (i.e., a loop) and $U_{1,1}$ (i.e., a coloop), giving the following.

\begin{manualtheorem}{\ref{thm:acyclicity-summary}}
  	All of the following complexes are acyclic:
	\[
		\left(\MC_{\bullet}, \partial_{\del}\right), \quad \quad \left(\MC_{\bullet}, \partial_{\del}^{\tot} \right), 
		\quad \quad \left(\MC_{\bullet}, \partial_{\con} \right), \quad \text{and} \quad \left(\MC_{\bullet}, \partial_{\con}^{\tot} \right).
	\]
\end{manualtheorem}

Put differently, this theorem says that the ability to sum with a loop or coloop results in an acyclic complex. Interestingly, \cite{BBCMMW24,ABBCV26} use a similar ``adjoining a coloop'' trick to prove certain complexes related to the top-weight cohomology of the moduli of principally polarized abelian varieties of dimension $g$ and other locally symmetric varieties are also acyclic. This also shows that the rational homology of the matroid complexes originally defined by \cite{ABGW00} vanishes above degree one. Since \cite{ABGW00} define their complexes over $\ZZ$, there may still be interesting torsion classes. 

We can extend this result to many subcomplexes defined in terms of interesting combinatorial properties of matroids. If $\bP$ is a property of matroids stable under isomorphism, we let $\MC^{\bP}_{\bullet}$ denote the subspace spanned by those classes that have property $\bP$. When $\bP$ is closed under deletion or contraction our differentials restrict to form subcomplexes $(\MC_{\bullet}^{\bP}, \partial)$ where $\partial$ is one of our four differentials depending on whether $\bP$ is deletion or contraction closed. Examples of such properties include being regular, being graphic, being cographic, etc. 
 
 \begin{manualcorollary}{\ref{cor:acyclicity-summary}}
 	Let $\bP$ be a property of matroids stable under isomorphism.
	\begin{enumerate}
	\item If $\bP$ is deletion closed and closed under direct sum with $U_{0,1}$ then $(\MC_{\bullet}^{\bP}, \partial_{\del})$ and $(\MC^{\bP}_{\bullet}, \partial_{\del}^{\tot})$ are acyclic. 
	\item If $\bP$ is contraction closed and closed under direct sum with $U_{1,1}$ then $(\MC_{\bullet}^{\bP}, \partial_{\con})$ and $(\MC^{\bP}_{\bullet}, \partial_{\con}^{\tot})$ are acyclic. 
	\end{enumerate}
 \end{manualcorollary} 

The corollary confirms an observation of Hampe, who noted that computations suggest many combinatorial subcomplexes of the \cite{ABGW00} matroid complexes have trivial homology \cite{hampe17}. This confirmation is again only over $\QQ$.

This acyclicity theorem is best viewed as an organizing principle rather than as bad news. It says that the unreduced matroid complexes are too large: the ability to adjoin a loop or coloop already forces a contracting homotopy. Consequently, any genuinely interesting homology must come from restrictions that break these one-element operations, such as simplicity, looplessness, bounded rank, or connected quotients. In the graph complex setting, the analogous fact is that the interesting homology lives on connected graphs. For matroid complexes, the corresponding reduction is:
 
\begin{manualtheorem}{\ref{thm:connected-generators}}\label{intro:connected}
  The algebra $(\MC_\bullet, \star)$ is the free super-commutative algebra on its connected matroid classes, and its indecomposable quotient is generated by connected matroid classes.  
\end{manualtheorem}
 
In other words, every class decomposes uniquely as a product of connected classes, so the study of $\MC_\bullet$ as an algebra reduces to connected matroids. When the relevant class of matroids is loopless (on the deletion side) or coloopless (on the contraction side), the disconnected classes form a subcomplex, and one can pass to a \emph{connected quotient} that identifies with the indecomposable quotient of the algebra. This is the analogue of the passage from all graphs to connected graphs in graph homology.

Guided by these structural results, we carry out two rounds of computations exploring the deletion matroid complex and several combinatorial subcomplexes. Using the \textit{Macaulay2} database of all matroids on at most nine elements, we compute the full deletion complex and the regular, binary, ternary, simple, and loopless subcomplexes in this range. Note that the number of 10-element matroids, up to isomorphism, remains an open question; however, there are at least $2.5\times 10^{12}$ non-isomorphic matroids of rank 5 on ten elements \cite{mayhewRoyle08}. Figure~\ref{fig:data1} shows $\MC_{\bullet}$ for up to $n=9$.
 
Our computations confirm Theorem~\ref{thm:acyclicity-summary} and Corollary~\ref{cor:acyclicity-summary} for the deletion, regular, binary, and ternary complexes. The simple and loopless deletion complexes are isomorphic (see Proposition~\ref{prop:vanishing-2loops-parallel}) and behave differently: apart from the degree-zero class given by $[\varnothing,1]$ they have a nonzero degree-$1$ class represented by $[U_{1,1},\eta]$, and we found no further homology through $n=8$. However, extending the computation to larger ground sets reveals genuinely new phenomena.
 
In a second round of computation, we use the database of connected simple loopless regular matroids compiled by Fripertinger and Wild \cite{fripertingerWild}, which extends through ground-set size $n=15$. Using linear algebra over $\FF_2$ to test for odd automorphisms in the binary representations, we compute the connected quotient $(\MC_{\bullet}^{\reg,\simp,\cont},\partial_{\del})$, which by Theorem~\ref{thm:connected-generators} and Corollary~\ref{cor:connected-indecomposables} is the indecomposable quotient of the simple loopless regular deletion matroid subcomplex. We computed several nontrivial homology classes generated by $U_{1,1}$ or the graphic matroid of a wheel graph $W_{g}$ of genus $g$ with odd genus. The data lead to the following conjectural description of the homology of the indecomposable quotient of the simple loopless regular matroid complex.

\begin{manualconjecture}{\ref{con:reg-homology}}
    The homology of the indecomposable quotient of the simple loopless regular matroid complex is generated by the classes of $U_{1,1}$ and $\sM(W_{2k+1})$ for all $k\geq1$.
\end{manualconjecture}

Here $\sM(W_{2k+1})$ is the graphic matroid of the wheel graph $W_{2k+1}$ with $4k+2$ edges and genus $2k+1$. The appearance of odd wheels is especially suggestive in light of the graph-complex picture. In the graph complex, the odd wheel graphs $W_{2k+1}$ give nontrivial homology classes \cite{willwacher15} and play important roles in related computations and applications \cite{brown21,CGP21,BCGP24}. Conjecture~\ref{con:reg-homology} may therefore be viewed as a matroidal shadow of the odd-wheel phenomenon in graph homology. 

 \begin{figure}[ht]
\centering
\scalebox{0.9}{
\renewcommand{\arraystretch}{1.5}
\setlength{\tabcolsep}{6pt}
\begin{tikzpicture}
\node[inner sep=0pt] (tbl) {%
\begin{tabular}{r | *{9}{>{\centering\arraybackslash}m{2.2em}} >{\centering\arraybackslash}m{5.2em}}
$9$ & \cellcolor{blue!30} & \cellcolor{blue!30} & \cellcolor{blue!30} & \cellcolor{blue!30} & \cellcolor{blue!30} & \cellcolor{blue!30} & \cellcolor{blue!30} & \cellcolor{blue!30} & \cellcolor{blue!30} & $0$ \\[2pt]
$8$ & \cellcolor{blue!30} & \cellcolor{blue!30} & \cellcolor{blue!30} & \cellcolor{blue!30} & \cellcolor{blue!30} & \cellcolor{blue!30} & \cellcolor{blue!30} & \cellcolor{blue!30} & $0$ & $0$ \\[2pt]
$7$ & \cellcolor{blue!30} & \cellcolor{blue!30} & \cellcolor{blue!30} & \cellcolor{blue!30} & \cellcolor{blue!30} & \cellcolor{blue!30} & \cellcolor{blue!30} & $0$ & $0$ & $0$ \\[2pt]
$6$ & \cellcolor{blue!30} & \cellcolor{blue!30} & \cellcolor{blue!30} & \cellcolor{blue!30} & \cellcolor{blue!30} & \cellcolor{blue!30} & $0$ & $0$ & $0$ & $\mathbb{Q}^{131}$ \\[2pt]
$5$ & \cellcolor{blue!30} & \cellcolor{blue!30} & \cellcolor{blue!30} & \cellcolor{blue!30} & \cellcolor{blue!30} & $0$ & $0$ & $0$ & $\mathbb{Q}^{21}$ & $\mathbb{Q}^{174{,}771}$ \\[2pt]
$4$ & \cellcolor{blue!30} & \cellcolor{blue!30} & \cellcolor{blue!30} & \cellcolor{blue!30} & $0$ & $0$ & $0$ & $\mathbb{Q}^{9}$ & $\mathbb{Q}^{299}$ & $\mathbb{Q}^{174{,}771}$ \\[2pt]
$3$ & \cellcolor{blue!30} & \cellcolor{blue!30} & \cellcolor{blue!30} & $0$ & $0$ & $0$ & $\mathbb{Q}^{2}$ & $\mathbb{Q}^{9}$ & $\mathbb{Q}^{21}$ & $\mathbb{Q}^{131}$ \\[2pt]
$2$ & \cellcolor{blue!30} & \cellcolor{blue!30} & $0$ & $0$ & $0$ & $0$ & $0$ & $0$ & $0$ & $0$ \\[2pt]
$1$ & \cellcolor{blue!30} & $\mathbb{Q}$ & $\mathbb{Q}$ & $0$ & $0$ & $0$ & $0$ & $0$ & $0$ & $0$ \\[2pt]
$0$ & $\mathbb{Q}$ & $\mathbb{Q}$ & $0$ & $0$ & $0$ & $0$ & $0$ & $0$ & $0$ & $0$ \\[2pt]
\midrule
 & \bfseries$0$ & \bfseries$1$ & \bfseries$2$ & \bfseries$3$ & \bfseries$4$ & \bfseries$5$ & \bfseries$6$ & \bfseries$7$ & \bfseries$8$ & \bfseries$9$ \\
\end{tabular}%
};
\node[left=10pt of tbl.west, rotate=90, anchor=center, font=\itshape] {$r$};
\node[below=6pt of tbl.south, anchor=center, font=\itshape] {$n$};
\end{tikzpicture}
}
\caption{Dimensions of the chain groups $\MC_{n-r,r}$ for $0\le r\le n\le 9$. Blue entries indicate bidegrees with no matroids.}
\label{fig:data1}
\end{figure}
 
 %%%%%%%%%%%%%%%%%%%%%%%%%%%%%%%%%%%%%%%%%%%%%%%%%%%
 \subsection*{Outline} 
 %%%%%%%%%%%%%%%%%%%%%%%%%%%%%%%%%%%%%%%%%%%%%%%%%%%
 Section~\ref{sec:background} recalls the necessary background on matroids. Section~\ref{sec:matroid-complex} contains our construction of matroid complexes in their various guises. Section~\ref{sec:comparison} compares our framework with \cite{ABGW00} and \cite{BBCMMW24}. Section~\ref{sec:algebraic-structure} establishes the Hopf algebra structure and proves the acyclicity and free-algebra theorems. Section~\ref{sec:computation} presents our computational work. 
 
%%%%%%%%%%%%%%%%%%%%%%%%%%%%%%%%%%%%%%%%%%%%%%%%%%%
\subsection*{Acknowledgments} 
%%%%%%%%%%%%%%%%%%%%%%%%%%%%%%%%%%%%%%%%%%%%%%%%%%%

We warmly thank Madeline Brandt, Francis Brown, Melody Chan, Daniel Corey, and Dustin Ross for helpful discussions. We also thank Michael Borinsky, Simone Hu, and Bernd Sturmfels for their encouragement to write this note. Finally, we are immensely grateful to Benjamin Ashlock, who worked on this project in its infancy.   

Juliette Bruce was partially supported by the National Science Foundation under Award Nos. NSF FRG DMS-2053221 and NSF MSPRF DMS-2002239. Bailee Zacovic was supported by the National Science Foundation Graduate
Research Fellowship Program under Grant No. DGE-2241144. Initial work on this project took place during the ``Midwest Research Experience for Graduates'' held in Summer 2023 at the University of Michigan, supported by NSF DMS-2317485. We thank the organizers of this program for providing a welcoming and comfortable environment that made these interactions possible. Any opinions,
findings, and conclusions or recommendations expressed in this material are those of the authors and do not reflect the views of the National Science Foundation.

%%%%%%%%%%%%%%%%%%%%%%%%%%%%%%%%%%%%%%%%%%%%%%%%%%%
\section{Background}\label{sec:background}
%%%%%%%%%%%%%%%%%%%%%%%%%%%%%%%%%%%%%%%%%%%%%%%%%%%

In this section, we briefly recall the basic facts concerning matroids that we will use and fix our notation and terminology. For a thorough treatment of matroid theory, see \cite{oxley11}. Throughout, we let $[n]\coloneqq \{1,2,\ldots,n\}$ and write $\binom{[n]}{k}$ for the set of $k$-element subsets of $[n]$.  There are numerous equivalent ways to define matroids; we employ the definition via bases. 

\begin{definition}
A \emph{matroid} is a pair $\sM=(E,\mathcal{B})$, where $E$ is a finite set and $\mathcal{B}$ is a collection of subsets of $E$ satisfying the following two axioms:
\begin{enumerate}
    \item[(B1)] $\mathcal{B}$ is nonempty.
    \item[(B2)] Let $S,T \in \cB$ be distinct subsets. For every $x\in S\setminus T$ there exists $y\in T\setminus S$ such that $(S\setminus \{x\})\cup \{y\}$ is in $\cB$. 
\end{enumerate}
\end{definition}
The (B2) axiom is commonly called the basis exchange axiom of a matroid. Given a matroid $\sM=(E,\mathcal{B})$ we call the finite set $E$ the \emph{ground set} of $\sM$ and the set $\cB$ the \emph{set of bases} of $\sM$. To avoid confusion, we will often write $E(\sM)$ and $\cB(\sM)$ for the ground set and bases of a matroid $\sM$. An element $S\in \cB$ is called a basis of $\sM$ and a subset $T \subset E$ is said to be an \emph{independent set} of $\sM$ if there exists a basis $S\in \cB$ such that $T\subset S$. We write $\cI(\sM)$ for the set of all independent sets of a matroid $\sM$. The independent sets maximal with respect to inclusion are precisely the bases, and every basis has the same size. 

\begin{definition}
    The \emph{rank} of a matroid $\sM=(E,\mathcal{B})$, which we denote by $\rk(\sM)$, is the size of any basis of $\sM$. The \emph{nullity} of a matroid $\sM$ is equal to $\nul(\sM)\coloneqq|E(\sM)|-\rk(\sM)$.
\end{definition}

\begin{example}\label{exm:into-uniform-matroid}
    Let $0\leq r\leq n$ be non-negative integers. The matroid with ground set $[n]$ whose bases are all subsets of $[n]$ of size $r$ is called the \emph{uniform matroid} of rank $r$ on $n$ elements. We denote this matroid by $U_{r,n}$, and it has nullity $n-r$.
\end{example}

While generally we will speak of independent sets and bases, at times it is also useful to discuss the dual notions. Let $\sM=(E,\mathcal{B})$ be a matroid. A subset $S\subset E$ is a \emph{dependent set} of $\sM$ if it is not an independent set of $\sM$. The minimal dependent sets are the \emph{circuits} of $\sM$. Reversing the roles of independent and dependent sets provides a notion of duality.

\begin{definition}
    The \textit{dual} of a matroid $\sM$ is the matroid $\sM^*$ whose ground set is $E(\sM)$ and whose bases are $\cB(\sM^*)=\{E(\sM)\setminus S \;\; | \;\; S \in \cB(\sM) \}$.
\end{definition}

It is immediate from the definition that $(\sM^*)^*=\sM$. Further, duality flips the roles of rank and nullity in the sense that $\rk(\sM^*)=\nul(\sM)$ and $\nul(\sM^*)=\rk(\sM)$. 

\begin{definition}\label{def:matroidpotpourri}
Fix a matroid $\sM=(E,\mathcal{B})$.
    \begin{itemize}
        \item An element $x\in E$ is a \emph{loop} of $\sM$ if $x\notin B$ for all $B\in \mathcal{B}$.
        \item An element $x\in E$ is a \emph{coloop} of $\sM$ if $x\in B$ for all $B\in \mathcal{B}$.
        \item Two elements $x,y\in E$ are \emph{parallel} if $\{x,y\}$ is a circuit.
        \item Two elements $x,y\in E$ are in \emph{series} if every basis contains $x$ or $y$ and $\{x,y\}$ is minimal with respect to this property. 
    \end{itemize}
\end{definition}

A matroid is \emph{simple} if it is loopless and has no parallel elements (i.e., it has no circuits of size one or two). A matroid is \emph{cosimple} if it has no coloops and no elements in series. An \emph{isomorphism} from a matroid $\sM$ to a matroid $\sM'$ is a bijection of ground sets $\psi:E(\sM)\to E(\sM')$ which preserves bases, i.e., given $S\subset E(\sM)$ we have that $S \in \cB(\sM)$ if and only if $\psi(S) \in \cB(\sM')$. Most matroid properties, e.g., rank, loops, coloops, etc., are preserved by isomorphisms.

%%%%%%%%%%%%%%%%%%%%%%%%%%%%%%%%%%%%%%%%%%%%%%%%%%%
\subsection{Operations on Matroids}
%%%%%%%%%%%%%%%%%%%%%%%%%%%%%%%%%%%%%%%%%%%%%%%%%%%

There are four natural operations one may define on matroids: deletion, contraction, restriction, and direct sum. The first two operations will be used to define matroid complexes in the following section. 

\begin{definition}
    Let $\sM$ be a matroid and fix a subset $S \subset E(\sM)$.
    \begin{enumerate}
        \item The \emph{deletion} of $S$ from $\sM$ is the matroid $\sM\setminus S$ whose ground set is $E(\sM)\setminus S$ and whose independent sets are:
            $
            \cI(\sM\setminus S) \coloneqq \left\{T\subset E(\sM)\setminus S \;\;\big|\;\; T\in \cI(\sM) \right\}.
            $
        \item The \emph{contraction} of $\sM$ by $S$ is the matroid $\sM/S$ whose ground set is $E(\sM)\setminus S$ and whose independent sets are:
            \[
            \cI(\sM/S) \coloneqq \left\{T\subset E(\sM)\setminus S \;\;\Bigg|\;\; \begin{matrix}
            \text{$\exists S'\subset S$ maximal among independent sets}\\
            \text{of $\sM$ in $S$ such that $T\cup S' \in \cI(\sM)$}
            \end{matrix}
            \right\}.
            \]
        \item The \emph{restriction} of $\sM$ to $S$ is the matroid $\sM|S$ whose ground set is $S$ and whose independent sets are:
            $
            \cI(\sM|S) \coloneqq \left\{T\subset S \;\;\big|\;\; T\in \cI(\sM) \right\}.
            $
    \end{enumerate}
\end{definition}

Abusing notation slightly, we will often write $\sM\setminus x$ and $\sM/x$ to mean $\sM\setminus\{x\}$ and $\sM/\{x\}$ respectively when deleting or contracting a single element $x\in E(\sM)$. Deletion and contraction are dual in the sense that $(\sM/S)^*=\sM^*\setminus S$. Any matroid that arises from $\sM$ via a sequence of deletions or contractions is called a \emph{minor} of $\sM$. The following lemma shows how deletion and contraction affect the rank of a matroid, which will be useful in later sections. 

 \begin{lemma}\label{lem:rank-deletion-contraction}
    Let $\sM=(E,\cB)$ be a matroid and $x\in E$.
    \begin{enumerate}
        \item If $x$ is not a coloop then $\rk(\sM\setminus x)=\rk(\sM)$, and if $x$ is a coloop $\rk(\sM\setminus x)=\rk(\sM)-1$. 
        \item If $x$ is a loop then $\rk(\sM/x)=\rk(\sM)$, and if $x$ is not a loop then $\rk(\sM/x)=\rk(\sM)-1$. 
    \end{enumerate}
\end{lemma}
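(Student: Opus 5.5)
The plan is to work directly from the definitions of deletion and contraction in terms of independent sets, using that the rank of a matroid equals the maximum size of an independent set. Both parts follow the same scheme. We identify the independent sets of $\sM\setminus x$ or $\sM/x$ and then compare the maximal ones with the bases of $\sM$. Part (2) could also be deduced from part (1) by duality, using $(\sM/x)^*=\sM^*\setminus x$, $\rk(\sN^*)=|E(\sN)|-\rk(\sN)$, and the fact that $x$ is a loop of $\sM$ exactly when it is a coloop of $\sM^*$. I would nonetheless check (2) directly as well, since the direct argument is just as short.

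For (1), the independent sets of $\sM\setminus x$ are exactly the independent sets of $\sM$ avoiding $x$. If $x$ is not a coloop, some basis $B\in\cB$ has $x\notin B$. Then $B$ is independent in $\sM\setminus x$, so $\rk(\sM\setminus x)\ge |B|=\rk(\sM)$. The reverse inequality is automatic, because every independent set of $\sM\setminus x$ is independent in $\sM$. If $x$ is a coloop, every basis contains $x$. For any $B\in\cB$, the set $B\setminus x$ is independent and avoids $x$, so $\rk(\sM\setminus x)\ge \rk(\sM)-1$. Conversely, an independent set $T$ of size $\rk(\sM)$ avoiding $x$ would be a basis not containing $x$, which is a contradiction. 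Hence $\rk(\sM\setminus x)=\rk(\sM)-1$.

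For (2), take $S=\{x\}$ in the definition of contraction. If $x$ is a loop, the only maximal independent subset of $\{x\}$ is $S'=\varnothing$. Then $\cI(\sM/x)=\{T\subset E\setminus x : T\in\cI(\sM)\}$, and since no basis contains a loop, the same argument as in the non-coloop case gives equal rank. If $x$ is not a loop, then $S'=\{x\}$, so $T$ is independent in $\sM/x$ exactly when $T\cup x\in\cI(\sM)$. Choose a basis $B$ containing $x$; one exists because $\{x\}$ is independent and so extends to a basis. Then $B\setminus x$ is independent in $\sM/x$, giving $\rk(\sM/x)\ge\rk(\sM)-1$. In the other direction, $|T|+1=|T\cup x|\le \rk(\sM)$.

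The only mildly delicate point is the one fact used implicitly throughout: every independent set extends to a basis, so that all maximal independent sets have size $\rk(\sM)$. The paper recalls this in the background section, and I would cite it rather than rederive it from the exchange axiom (B2).
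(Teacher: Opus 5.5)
Your proof is correct and complete. The paper states this lemma without proof and treats it as standard matroid background, so there is no paper argument to compare against. Your direct check from the independent-set descriptions of $\sM\setminus x$ and $\sM/x$ is the standard one.

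The point you flag as delicate is in fact immediate in the paper's setup. There, a set is independent by definition exactly when it is contained in some basis. The only real inputs are therefore two facts. First, all bases have the same size, which the background records. Second, $\sM\setminus x$ and $\sM/x$ are matroids, so their rank equals the maximum size of an independent set.

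Similarly, you do not need to extend $\{x\}$ to obtain a basis containing $x$ when $x$ is not a loop. Such a basis exists directly by the definition of a loop.

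Your alternative duality derivation of (2), via $(\sM/x)^*=\sM^*\setminus x$ and the loop/coloop swap, is also valid.
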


The last operation on matroids is the direct sum of two matroids. 

\begin{definition}
    The \emph{direct sum} of two matroids $\sM=(E,\cB)$ and $\sM'=(E',\cB')$ is the matroid $\sM\oplus \sM'$ whose ground set is $E(\sM\oplus \sM')\coloneqq E\sqcup E'$ and whose bases are
    \[
    \cB(\sM\oplus \sM')\coloneqq \left\{ B \sqcup B' \;\; |\;\; B \in \cB(\sM), B' \in \cB(\sM') \right\}.
    \]
\end{definition}

\begin{example}\label{exm:adjoin-loop}
    A special case of direct sum that will appear later is $\sM\oplus U_{0,1}$ where $U_{0,1}$ is the uniform matroid of rank 0 on one element. Writing $\{\ell\}$ for $E(U_{0,1})$ we see that $E(\sM\oplus U_{0,1})=E(\sM)\sqcup\{\ell\}$, and further, $\ell$ is not contained in any basis, so it is a loop of both $U_{0,1}$ and $\sM\oplus U_{0,1}$. For this reason, direct summing with $U_{0,1}$ is often referred to as adjoining a loop. 
\end{example}
%%%%%%%%%%%%%%%%%%%%%%%%%%%%%%%%%%%%%%%%%%%%%%%%%%%
\subsection{Sources of Matroids}
%%%%%%%%%%%%%%%%%%%%%%%%%%%%%%%%%%%%%%%%%%%%%%%%%%%

Given a (connected undirected) graph $G=(V,E)$, the graphic matroid $\sM(G)$ has ground set $E$ and independent sets the forests of $G$; its bases are the spanning trees. Matroids isomorphic to $\sM(G)$ are \emph{graphic}, and those isomorphic to $\sM(G)^*$ are \emph{cographic}. If $A$ is a matrix over a field $\FF$, its column matroid $\sM(A)$ has ground set the columns of $A$ and bases the maximal linearly independent subsets. A matroid is representable over $\FF$ if it is isomorphic to $\sM(A)$ for some $A$. A matroid is \emph{regular} if it is representable over every field, equivalently, if it admits a totally unimodular real representation. In particular, every graphic or cographic matroid is regular.

%%%%%%%%%%%%%%%%%%%%%%%%%%%%%%%%%%%%%%%%%%%%%%%%%%%
\section{Matroid Complexes}\label{sec:matroid-complex}
%%%%%%%%%%%%%%%%%%%%%%%%%%%%%%%%%%%%%%%%%%%%%%%%%%%

Matroid complexes were introduced in \cite{ABGW00} as a direct generalization of Kontsevich's graph complexes. This section presents these complexes using a slightly different construction. Many of the results below appear in \cite{ABGW00}. However, to keep this note self-contained -- and in the hope of encouraging further interest in matroid complexes -- we provide a new exposition. 

We work with a single bigraded vector space $\MC_{\bullet,\bullet}\coloneqq \bigoplus_{k,r} \MC_{k,r}$ spanned by classes of matroids with an orientation on their ground set, where $k$ is the nullity of the matroid and $r$ is the rank. Matroid deletion and contraction allow one to define differentials on $\MC_{\bullet,\bullet}$. For them to respect the bigrading we must in fact split these into four natural operators: $\partial_{\del},\partial_{\clp}, \partial_{\con}$, and $\partial_{\lp}$.  The maps $\partial_{\del}$ and $\partial_{\clp}$ are defined by deleting elements that are not coloops and deleting coloops respectively. Similarly, $\partial_{\con}$ and $\partial_{\lp}$ are defined by contracting elements that are not loops and contracting loops respectively. Both $\partial_{\del}$ and $\partial_{\lp}$ have bidegree $(-1,0)$, while $\partial_{\con}$ and $\partial_{\clp}$ have bidegree $(0,-1)$. These differentials assemble into two bicomplexes: the deletion bicomplex $(\MC_{\bullet,\bullet}, \partial_{\del},\partial_{\clp})$ and the contraction bicomplex $(\MC_{\bullet,\bullet}, \partial_{\lp}, \partial_{\con})$. Matroid duality exchanges these two bicomplexes after interchanging the nullity and rank gradings. 

\subsection{The Vector Space $\MC$}

Throughout, given a set $S$, we let $\QQ\langle S\rangle $ and $\ZZ\langle S\rangle$ denote the $\QQ$-vector space and free $\ZZ$-module, respectively, generated by elements of $S$. 

\begin{definition}
    An \emph{orientation} on a matroid $\sM=(E,\cB)$ is a primitive element of $\bigwedge^{|E|}\ZZ\left\langle E \right\rangle$.
\end{definition}

Every matroid, even the empty matroid $\varnothing$, has an orientation, since $\bigwedge^{|\varnothing|}\ZZ\langle \varnothing\rangle \cong \ZZ$. We will normally refer to $1$ as the canonical orientation on $\varnothing$.

\begin{remark}
This notion of orientation should not be confused with the usual notion of an oriented matroid; here the orientation data is an orientation of the determinant line of the ground set.
\end{remark}

Let $\widetilde{\cM}$ denote the (ungraded) $\QQ$-vector space freely generated by pairs $(\sM,\eta)$ where $\sM$ is any matroid and $\eta$ is any orientation on $\sM$. There is a natural bigrading on $\widetilde{\cM}$ by nullity and rank. Given integers $k$ and $r$, let
\[
\widetilde{\cM}_{k,r} \coloneqq \QQ\left\langle \left(\sM,\eta\right) \;\; \Bigg|\;\;
\begin{matrix}
    \text{$\sM=(E,\cB)$ is a matroid with $\nul(\sM)=k$, $\rk(\sM)=r$,}\\
    \text{and $\eta$ an orientation on $\sM$}
\end{matrix}\;
\right\rangle \subset \widetilde{\cM}.
\]
This turns $\widetilde{\cM}$ into a $\ZZ^2$-graded $\QQ$-vector space with $\widetilde{\cM}_{\bullet,\bullet}\cong \bigoplus_{(k,r)\in \ZZ^2}\widetilde{\cM}_{k,r}$. 

We wish to quotient $\widetilde{\cM}$ by the natural equivalence relations that arise from reversing orientation and identifying isomorphic matroids. Before we can do this, we must note how orientations push forward along matroid isomorphisms. Since a matroid isomorphism $\psi:\sM\to \sM'$ is a bijection $\psi:E(\sM)\to E(\sM')$, it induces an isomorphism 
\[
\begin{tikzcd}[column sep = 3em]
\psi_{*}:\displaystyle \bigwedge^{|E(\sM)|}\ZZ\left\langle E(\sM) \right\rangle \arrow[r,"\sim"]& \displaystyle\bigwedge^{|E(\sM')|}\ZZ\left\langle E(\sM') \right\rangle.
\end{tikzcd}
\]

\begin{definition}
	Let $\sM=(E,\cB)$ be a matroid and $\eta\in\left(\bigwedge^{|E|}\ZZ\langle E \rangle\right)^{\times}$ an orientation. The \emph{pushforward} of $\eta$ along an isomorphism $\psi:\sM\to \sM'$ is the image of $\eta$ under the map $\psi_{*}$.
\end{definition}

Consider the subspace spanned by the following relations:
\[
\widetilde{\cR}\coloneqq \Span_{\QQ}\left\langle
\begin{matrix}
    (\sM,\eta)+(\sM,-\eta) \\
(\sM,\eta)-(\sM',\psi_{*}(\eta))\end{matrix} \;\; \bigg| \;\;
\begin{matrix}
    \text{all pairs $(\sM,\eta)$ of a matroid with an orientation}\\
    \text{all matroid isomorphisms $\psi:\sM\to\sM'$}
\end{matrix}
\right\rangle \subset \widetilde{\cM}.
\]
Let $\MC$ be the (ungraded) $\QQ$-vector space $\widetilde{\cM}/\widetilde{\cR}$. The relations generating $\widetilde{\cR}$ are homogeneous in the nullity-rank grading, and so the $\ZZ^2$-grading descends to $\MC$, given by
\[
\MC \cong \bigoplus_{(k,r)\in \ZZ^2} \MC_{k,r} \cong \bigoplus_{(k,r)\in \ZZ^2} \widetilde{\cM}_{k,r}/\widetilde{\cR}_{k,r}.
\]
We may think of $\MC_{k,r}$ as spanned by classes $[\sM,\eta]$ where $\sM$ is a matroid of rank $r$ with nullity $k$ and $\eta$ is an orientation on $\sM$. Note such a class has ground set size $k+r$ and $\MC_{\bullet,\bullet}$ is supported in the first quadrant. 

%%%%%%%%%%%%%%%%%%%%%%%%%%%%%%%%%%%%%%%%%%%%%%%%%%%
\subsection{Deletion \& Contraction Differentials}
%%%%%%%%%%%%%%%%%%%%%%%%%%%%%%%%%%%%%%%%%%%%%%%%%%%

We now define differentials on $\MC_{\bullet,\bullet}$. There are four natural choices of differentials one could consider: two coming from deletion and two coming from contraction. In both cases the key is that an orientation $\eta$ on a matroid $\sM$ induces orientations on both $\sM\setminus x$ and $\sM/x$ for any element $x\in E(\sM)$. In this section we focus on what we view as the more interesting differentials that arise from deleting or contracting elements that are not coloops or loops, respectively. 

If $S$ is a finite set and $x \in S$, interior product with respect to $x$ is the linear map
\[
\begin{tikzcd}[column sep = 3em]
\iota_{x}:\displaystyle \bigwedge^{|S|}\ZZ\left\langle S \right\rangle \arrow[r]& \displaystyle\bigwedge^{|S|-1}\ZZ\left\langle S \right\rangle\end{tikzcd}
\]
given by $\iota_{x}(x \wedge \alpha)=\alpha$ and extended with the usual sign convention. The image of this map is contained in the free submodule $\bigwedge^{|S|-1}\ZZ\left\langle S\setminus\{x\} \right\rangle$, and thus, we will view $\iota_{x}$ as a linear map into this subspace. The interior product preserves primitive elements and as such we can use it to define the deletion and contraction of orientations on matroids. Note that both $\sM\setminus x$ and $\sM/x$ have ground set $E(\sM)\setminus\{x\}$, so the induced orientations are defined by the same interior product operator $\iota_x$.

\begin{definition}\label{def:orientation}
    Let $\sM$ be a matroid, $\eta$ an orientation on $\sM$, and $x\in E(\sM)$. 
    \begin{enumerate}
        \item The \emph{induced deletion orientation} on $\sM\setminus x$ is $\eta\setminus x\coloneqq \iota_{x}(\eta)$. 
        \item The \emph{induced contraction orientation} on $\sM/x$ is $\eta/x\coloneqq \iota_{x}(\eta)$.
    \end{enumerate}
\end{definition}

\begin{example}\label{ex:K4-1}
As a running example consider the graphic matroid $\sM(K_4)$ on the complete graph with four vertices. We choose an ordering on the edges of $K_{4}$, which in turn gives an identification $E(\sM(K_4)) \cong [6]$. Fix the orientation $\eta\coloneqq 1\wedge 2\wedge 3\wedge 4\wedge 5\wedge 6$. Since $K_4$ is connected with 4 vertices and 6 edges, the matroid $\sM(K_4)$ has rank 3 and nullity 3. Hence $[\sM(K_4),\eta]\in \MC_{3,3}$. Moreover, this class is nonzero, as one can check that every automorphism of $\sM(K_4)$ induces an even permutation on $[6]$, and so no automorphism sends $\eta$ to $-\eta$. 

For later use, given $i \in [6]$ we let $\eta_i\coloneqq (-1)^{i-1}\,1\wedge \cdots \wedge \widehat{i}\wedge \cdots \wedge 6$. With this notation we have that $\eta\setminus i=\eta/i=\eta_i$ for all $i \in [6]$. For example, we have that
\[
\eta\setminus 1=2\wedge 3\wedge 4\wedge 5\wedge 6,\qquad
\eta\setminus 2=-1\wedge 3\wedge 4\wedge 5\wedge 6,\qquad
\eta\setminus 3=1\wedge 2\wedge 4\wedge 5\wedge 6.
\]
\end{example}

Define linear maps $\tilde{\partial}_{\del}:\widetilde{\cM}_{\bullet,\bullet}\to \widetilde{\cM}_{\bullet, \bullet}$ and $\tilde{\partial}_{\con}:\widetilde{\cM}_{\bullet,\bullet}\to \widetilde{\cM}_{\bullet, \bullet}$ on basis elements by:
 \[
 \tilde{\partial}_{\del}\left((\sM,\eta)\right) \coloneqq \sum_{\substack{x\in E(\sM) \\ \text{$x$ not a coloop}}} \left(\sM\setminus x, \eta \setminus x\right) \quad \quad \text{and} \quad \quad \tilde{\partial}_{\con}\left((\sM,\eta)\right) \coloneqq \sum_{\substack{x\in E(\sM) \\ \text{$x$ not a loop}}} \left(\sM/x, \eta/x\right),
 \]
 and extended linearly. Lemma~\ref{lem:rank-deletion-contraction} implies that $\tilde{\partial}_{\del}$ preserves rank and decreases nullity by one while $\tilde{\partial}_{\con}$ decreases rank by one and preserves nullity. Thus, $\tilde{\partial}_{\del}$ is a degree $(-1,0)$ linear map and $\tilde{\partial}_{\con}$ is a degree $(0,-1)$ linear map.
  
 We now show that these maps descend to differentials on $\MC$. Before checking the details, it is helpful to separate the two issues. First, the maps $\tilde{\partial}_{\del}$ and $\tilde{\partial}_{\con}$ must respect the relations defining $\MC$; this is a functorial statement and comes from the fact that interior product commutes with relabeling of the ground set. Second, once these maps descend, the reason they square to zero is the usual anti-commutativity of interior products together with the fact that deleting or contracting two distinct elements does not depend on the order. The next few lemmas formalize these two points.

\begin{lemma}\label{lem:deletion-desc}
	Both $\tilde{\partial}_{\del}$ and $\tilde{\partial}_{\con}$ descend to linear maps $\partial_{\del}$ and $\partial_{\con}$ on $\MC$.
\end{lemma}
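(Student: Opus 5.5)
Since $\MC=\widetilde{\cM}/\widetilde{\cR}$, it suffices to show $\tilde{\partial}_{\del}(\widetilde{\cR})\subset\widetilde{\cR}$ and $\tilde{\partial}_{\con}(\widetilde{\cR})\subset\widetilde{\cR}$. By linearity it is enough to check this on the two families of generators of $\widetilde{\cR}$. The argument for $\tilde{\partial}_{\con}$ is identical to that for $\tilde{\partial}_{\del}$: replace ``deletion'' by ``contraction'' and ``coloop'' by ``loop''. So I will carry it out for deletion and note the substitutions.

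\emph{Orientation reversal.} For a generator $(\sM,\eta)+(\sM,-\eta)$, the set of non-coloops of $\sM$ does not depend on the orientation. Since $\iota_x$ is linear, $\iota_x(-\eta)=-\iota_x(\eta)$. Hence
\[
\tilde{\partial}_{\del}\bigl((\sM,\eta)+(\sM,-\eta)\bigr)=\sum_{x\text{ not a coloop}}\Bigl((\sM\setminus x,\eta\setminus x)+(\sM\setminus x,-(\eta\setminus x))\Bigr),
\]
and each summand is a generator of $\widetilde{\cR}$ of the first kind.

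\emph{Isomorphism.} For a generator $(\sM,\eta)-(\sM',\psi_*\eta)$ with $\psi:\sM\to\sM'$ an isomorphism, I will verify three facts.
\begin{enumerate}
\item $\psi$ restricts to a bijection from the non-coloops (resp.\ non-loops) of $\sM$ to those of $\sM'$. This holds because coloops and loops are defined through bases, and $\psi$ preserves bases.
\item For each $x\in E(\sM)$, the restriction $\psi|_{E\setminus x}$ is an isomorphism $\sM\setminus x\to\sM'\setminus\psi(x)$, and likewise $\sM/x\to\sM'/\psi(x)$. For deletion, independent sets of $\sM\setminus x$ are the independent sets of $\sM$ avoiding $x$, and these are carried to the independent sets of $\sM'$ avoiding $\psi(x)$. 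For contraction, $\psi$ carries maximal independent subsets of $\{x\}$ to maximal independent subsets of $\{\psi(x)\}$, so the defining condition transports.
\item The naturality identity $(\psi|_{E\setminus x})_*\,\iota_x(\eta)=\iota_{\psi(x)}(\psi_*\eta)$ holds.
\end{enumerate}

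Fact (3) is the main point, though it is routine. To prove it, write $\eta=\pm\, x\wedge\alpha$ with $\alpha$ a wedge of the remaining elements. Then $\psi_*\eta=\pm\,\psi(x)\wedge\psi_*\alpha$, so both sides of the identity equal $\pm\,\psi_*\alpha$. The sign conventions of the interior product therefore agree on the two sides, because $\psi_*$ is the induced map of exterior algebras and commutes with moving the distinguished factor to the front.

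Combining (1)--(3) and reindexing the sum for $\sM'$ by $y=\psi(x)$, we get
\[
\tilde{\partial}_{\del}\bigl((\sM,\eta)-(\sM',\psi_*\eta)\bigr)=\sum_{x\text{ not a coloop}}\Bigl((\sM\setminus x,\eta\setminus x)-\bigl(\sM'\setminus\psi(x),(\psi|_{E\setminus x})_*(\eta\setminus x)\bigr)\Bigr).
\]
This is a sum of generators of $\widetilde{\cR}$ of the second kind, so $\tilde{\partial}_{\del}$ preserves $\widetilde{\cR}$. The same computation with contraction in place of deletion and loops in place of coloops gives the result for $\tilde{\partial}_{\con}$. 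Hence both maps descend to $\MC$. They also preserve the bigrading, since the generators of $\widetilde{\cR}$ are bihomogeneous.
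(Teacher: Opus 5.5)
Your proposal is correct and matches the paper's proof step for step. You reduce to checking $\tilde{\partial}(\widetilde{\cR})\subset\widetilde{\cR}$ on the two kinds of generators. Orientation reversal follows from linearity of $\iota_x$. The isomorphism relation follows because $\psi$ preserves (co)loops, restricts to isomorphisms of deletions and contractions, and is compatible with interior products. Your fact (3) is the paper's Lemma~\ref{lem:pushforward-del-con}, which the paper leaves as a check on generators; your $\eta=\pm\, x\wedge\alpha$ computation supplies exactly that check.
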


Before proving this, we need one technical lemma.

\begin{lemma}\label{lem:pushforward-del-con}
	Let $\psi:\sM\to \sM'$ be an isomorphism of matroids and $x\in E(\sM)$. If $\eta$ is an orientation on $\sM$ then 
	\[
	\psi_{*}(\eta)\setminus \psi(x) = \tilde{\psi}_{*}(\eta\setminus x)
	\quad\text{and}\quad
	\psi_{*}(\eta)/\psi(x) = \tilde{\psi}_{*}(\eta/x),
	\]
	where $\tilde{\psi}:E(\sM)\setminus\{x\}\to E(\sM')\setminus\{\psi(x)\}$ is a bijection given by restricting $\psi$.
	\end{lemma}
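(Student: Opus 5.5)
Both $\eta\setminus x$ and $\eta/x$ are defined as $\iota_x(\eta)$, so the two identities are the same statement. It therefore suffices to prove the purely exterior-algebra identity
\[
\iota_{\psi(x)}\bigl(\psi_*(\eta)\bigr)=\tilde\psi_*\bigl(\iota_x(\eta)\bigr)
\]
for any bijection $\psi\colon E\to E'$ of finite sets, any $x\in E$, and any top-degree element $\eta\in\bigwedge^{|E|}\ZZ\langle E\rangle$. Here $\tilde\psi_*$ is viewed as a map $\bigwedge^{|E|-1}\ZZ\langle E\setminus\{x\}\rangle\to\bigwedge^{|E|-1}\ZZ\langle E'\setminus\{\psi(x)\}\rangle$. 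Matroid structure plays no role beyond $\psi$ being a bijection of ground sets.

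Both sides are $\ZZ$-linear in $\eta$, and $\bigwedge^{|E|}\ZZ\langle E\rangle$ is free of rank one. So I will check the identity on the single generator $\eta=x\wedge\alpha$, where $\alpha=e_2\wedge\cdots\wedge e_n$ is the wedge of the remaining elements of $E\setminus\{x\}$ in some order. Every orientation is $\pm$ this element, and the sign passes through both sides by linearity.

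For this $\eta$, the left side is computed first. Since $\psi_*$ is the algebra map induced by $\psi$, we get $\psi_*(\eta)=\psi(x)\wedge\psi(e_2)\wedge\cdots\wedge\psi(e_n)$. By the defining rule $\iota_y(y\wedge\beta)=\beta$, applying $\iota_{\psi(x)}$ gives $\psi(e_2)\wedge\cdots\wedge\psi(e_n)$. On the right side, $\iota_x(\eta)=\alpha$, and $\tilde\psi_*(\alpha)=\tilde\psi(e_2)\wedge\cdots\wedge\tilde\psi(e_n)$. This agrees with the left side because $\tilde\psi$ is the restriction of $\psi$.

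The only potential subtlety is sign bookkeeping: one must make sure that writing $\eta$ with $x$ in the first slot is legitimate. It is, since any top form equals $\pm x\wedge\alpha$, and linearity of both $\iota$ and $\psi_*$ carries the sign through. Equivalently, one can note that $\psi_*$ is an algebra isomorphism intertwining the derivations $\iota_x$ and $\iota_{\psi(x)}$, because both are determined by their values on generators: $\iota_x(y)=\delta_{xy}$ and $\iota_{\psi(x)}(\psi(y))=\delta_{\psi(x)\psi(y)}=\delta_{xy}$. That observation gives the identity directly on all of $\bigwedge^\bullet$, and I expect no real obstacle beyond this.
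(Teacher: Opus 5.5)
Your proposal is correct and matches the paper's approach. The paper also reduces both identities to the single commutation $\iota_{\psi(x)}\circ\psi_* = \tilde\psi_*\circ\iota_x$ for a bijection of finite sets, and says it "can be checked directly on generators," which is your computation with $\eta = x\wedge\alpha$.
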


\begin{proof}
	Since the deletion and contraction of an orientation is defined in terms of interior product maps, both claims follow from the following fact: If $\psi:E\to E'$ is a bijection of finite sets and $x\in E$, the following diagram commutes:
	\[
	\begin{tikzcd}[column sep = 3em, row sep = 3em]
	\displaystyle \bigwedge^{|E|}\ZZ\left\langle E \right\rangle \arrow[r, "\iota_x"] \arrow[d,"\psi_{*}"]& \displaystyle\bigwedge^{|E|-1}\ZZ\left\langle E\setminus\{x\} \right\rangle \arrow[d,"\tilde{\psi}_{*}"]\\
	\displaystyle \bigwedge^{|E'|}\ZZ\left\langle E' \right\rangle \arrow[r, "\iota_{\psi(x)}"]& \displaystyle\bigwedge^{|E'|-1}\ZZ\left\langle E'\setminus\{\psi(x)\} \right\rangle 
	\end{tikzcd}
	\]
	The claim itself can be checked directly on generators. 
\end{proof}

\begin{proof}[Proof of Lemma~\ref{lem:deletion-desc}]
	We must check that $\tilde{\partial}_{\del}(\widetilde{\cR}) \subset \widetilde{\cR}$ and $\tilde{\partial}_{\con}(\widetilde{\cR}) \subset \widetilde{\cR}$. It is enough to check these claims on the two types of generators of $\widetilde{\cR}$. Let $\sM$ be a matroid and $\eta$ an orientation on $\sM$.  Considering the generator $(\sM,\eta)+(\sM,-\eta)$ we compute directly 
	\begin{align*}
	\tilde{\partial}_{\del}\left((\sM,\eta)+(\sM,-\eta)\right) &= \tilde{\partial}_{\del}\left((\sM,\eta)\right) + \tilde{\partial}_{\del}\left((\sM,-\eta)\right) = \sum_{\substack{x\in E(\sM) \\ \text{$x$ not a coloop}}} \left(\sM\setminus x, \eta \setminus x\right) + \sum_{\substack{x\in E(\sM) \\ \text{$x$ not a coloop}}} \left(\sM\setminus x, (-\eta) \setminus x\right) 
	\intertext{since $(-\eta)\setminus x$ is equal to $\iota_{x}(-\eta)$ and $\iota_{x}$ is a linear map we may pull out the minus sign on the right-hand side and recombine to get}
	&=\sum_{\substack{x\in E(\sM) \\ \text{$x$ not a coloop}}} \left(\left(\sM\setminus x, \eta \setminus x\right)+\left(\sM\setminus x, -(\eta \setminus x)\right)\right),
	\end{align*}
	which is in $\widetilde{\cR}$ since each term in the sum is in $\widetilde{\cR}$. The argument for $\tilde{\partial}_{\con}$ is similar noting that $(-\eta)/x$ is equal to $\iota_{x}(-\eta)$. Turning to the second type of generator let $\psi:\sM\to \sM'$ be a matroid isomorphism and consider the generator $(\sM,\eta)-(\sM',\psi_*(\eta))$. Note for any $x\in E(\sM)$ the restriction of $\psi$ induces an isomorphism $\tilde{\psi}: \sM\setminus x \to \sM'\setminus \psi(x)$. Further, $x$ is a coloop of $\sM$ if and only if $\psi(x)$ is a coloop of $\sM'$. Using these two facts we get that:
	\begin{align*}
	\tilde{\partial}_{\del}\left((\sM,\eta)-(\sM',\psi_*(\eta))\right) &= \tilde{\partial}_{\del}\left((\sM,\eta)\right) - \tilde{\partial}_{\del}\left((\sM',\psi_*(\eta))\right) \\
	&= \sum_{\substack{x\in E(\sM) \\ \text{$x$ not a coloop}}} \left(\sM\setminus x, \eta \setminus x\right) - \sum_{\substack{y \in E(\sM') \\ \text{$y$ not a coloop}}} \left(\sM'\setminus y, \psi_*(\eta)\setminus y\right) \\
	&=\sum_{\substack{x\in E(\sM) \\ \text{$x$ not a coloop}}}\left( \left(\sM\setminus x, \eta \setminus x\right) - \left(\sM'\setminus \psi(x), \psi_*(\eta)\setminus \psi(x) \right)\right).
	\intertext{Applying Lemma~\ref{lem:pushforward-del-con} gives}
	&=\sum_{\substack{x\in E(\sM) \\ \text{$x$ not a coloop}}}\left( \left(\sM\setminus x, \eta \setminus x\right) - \left(\sM'\setminus \psi(x), \tilde{\psi}_*(\eta\setminus x)\right)\right),
	\end{align*}
	which again is in $\widetilde{\cR}$ since each term in the sum is in $\widetilde{\cR}$. The same argument works for contraction since $\tilde{\psi}$ is also an isomorphism between $\sM/x$ and $\sM'/\psi(x)$ and $x$ is a loop of $\sM$ if and only if $\psi(x)$ is a loop of $\sM'$. 
	\end{proof}
	
Lastly, we check that although $\tilde{\partial}_{\del}$ and $\tilde{\partial}_{\con}$ are not differentials on $\widetilde{\cM}$ they descend to differentials $\partial_{\del}$ and $\partial_{\con}$ on $\MC_{\bullet,\bullet}$.

\begin{lemma}\label{lem:horizontal-diffs}
	The maps $\partial_{\del}$ and $\partial_{\con}$ are differentials of degree $(-1,0)$ and $(0,-1)$ respectively. 
\end{lemma}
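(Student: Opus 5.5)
The degree claims were already recorded after the definitions of $\tilde{\partial}_{\del}$ and $\tilde{\partial}_{\con}$, via Lemma~\ref{lem:rank-deletion-contraction}, and they descend to $\MC$ by Lemma~\ref{lem:deletion-desc}. What remains is $\partial_{\del}^2=0$ and $\partial_{\con}^2=0$. For each I will compute the square on a generator $[\sM,\eta]$ and pair the terms so they cancel already in $\widetilde{\cM}$, hence in $\MC$.

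For deletion, expand
\[
\tilde{\partial}_{\del}^2\bigl((\sM,\eta)\bigr)=\sum_{(x,y)}\bigl(\sM\setminus x\setminus y,\ \iota_y\iota_x\eta\bigr).
\]
The sum runs over ordered pairs of distinct $x,y$ with $x$ not a coloop of $\sM$ and $y$ not a coloop of $\sM\setminus x$. The underlying matroid $\sM\setminus\{x,y\}$ does not depend on the order, and $\iota_y\iota_x=-\iota_x\iota_y$ on exterior powers. So the terms for $(x,y)$ and $(y,x)$ cancel, provided the index set is symmetric. I therefore need the following claim. Suppose $x$ is not a coloop of $\sM$ and $y$ is not a coloop of $\sM\setminus x$. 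Then $y$ is not a coloop of $\sM$, and $x$ is not a coloop of $\sM\setminus y$.

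The first half is clear: a basis of $\sM\setminus x$ avoiding $y$ is a basis of $\sM$, since $x$ not being a coloop means $\rk(\sM\setminus x)=\rk(\sM)$. For the second half, the condition "$x$ not a coloop of $\sM$ and $y$ not a coloop of $\sM\setminus x$" is equivalent to $\rk(\sM\setminus\{x,y\})=\rk(\sM)$, using Lemma~\ref{lem:rank-deletion-contraction} twice. This rank condition is symmetric in $x$ and $y$, which gives the claim.

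Contraction is handled dually. The relevant condition is "$x$ not a loop of $\sM$ and $y$ not a loop of $\sM/x$", which is equivalent to $\rk(\sM/\{x,y\})=\rk(\sM)-2$. In other words, $\{x,y\}$ is independent, and this is again symmetric. Alternatively, apply duality, $(\sM/S)^*=\sM^*\setminus S$, since loops of $\sM$ are exactly the coloops of $\sM^*$. In either case $\sM/x/y=\sM/y/x$ and the orientations are again $\iota_y\iota_x\eta=-\iota_x\iota_y\eta$.

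The only step that is not pure formalism is the symmetry of the index set. It is there that I would be careful to use the rank characterization rather than a case analysis on parallel or series elements.
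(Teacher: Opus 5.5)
Your proof is correct and follows the paper's argument. You pair $(x,y)$ with $(y,x)$, use $\iota_y\iota_x=-\iota_x\iota_y$ together with $(\sM\setminus x)\setminus y=(\sM\setminus y)\setminus x$ (and the analogue for contraction), and check that the index set is symmetric. The paper checks symmetry by exhibiting a basis of $\sM$ avoiding both $x$ and $y$; your rank count $\rk(\sM\setminus\{x,y\})=\rk(\sM)$ (respectively $\rk(\sM/\{x,y\})=\rk(\sM)-2$) is an equally valid way to see it.

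One correction: the paired terms $(\sM\setminus\{x,y\},\alpha)$ and $(\sM\setminus\{x,y\},-\alpha)$ do not cancel "already in $\widetilde{\cM}$". They are distinct generators of that free vector space, and indeed $\tilde{\partial}_{\del}^2\neq 0$ on $\widetilde{\cM}$. What is true is that their sum lies in $\widetilde{\cR}$ by the orientation-reversal relation, and that is exactly what gives $\partial_{\del}^2=0$ and $\partial_{\con}^2=0$ on $\MC$.
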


\begin{proof}
	It is enough to show $\tilde{\partial}_{\del}^2(\widetilde{\cM})\subset \widetilde{\cR}$ and $\tilde{\partial}_{\con}^2(\widetilde{\cM})\subset \widetilde{\cR}$. Beginning with the deletion differential, let $\sM$ be a matroid and $\eta$ an orientation on $\sM$. Expanding the definition and using linearity gives:
	\begin{align*}
	\tilde{\partial}_{\del}^2\left((\sM,\eta)\right)
	&=\sum_{\substack{x\in E(\sM) \\ \text{$x$ not a coloop}}}\;\;\sum_{\substack{y\in E(\sM)\setminus\{x\}\\ \text{$y$ not a coloop in }\sM\setminus x}}
	\left((\sM\setminus x)\setminus y,\,(\eta\setminus x)\setminus y\right).
	\end{align*}
	Consider an ordered pair $(x,y)$ appearing in the above sum. Since $y$ is not a coloop in $\sM\setminus x$, there exists a basis $B$ of $\sM\setminus x$ not containing $y$. Note $B$ is also a basis in $\sM$ and it does not contain both $x$ and $y$. This means that $y$ is not a coloop in $\sM$ and $x$ is not a coloop in $\sM\setminus y$. In particular, the swapped ordered pair $(y,x)$ must also appear in the above sum. 
	For $x\neq y$ the interior products $\iota_{x}$ and $\iota_{y}$ anti-commute giving
	\[
	(\eta\setminus x)\setminus y \;=\; \iota_y\iota_x(\eta)\;=\;-\iota_x\iota_y(\eta)\;=\;-\big((\eta\setminus y)\setminus x\big).
	\]
Further, matroid deletion is commutative, meaning for distinct $x,y\in E(\sM)$ both $(\sM\setminus x)\setminus y$ and $(\sM\setminus y)\setminus x$ are equal to $\sM\setminus\{x,y\}$. Thus, we may combine the two summands corresponding to $(x,y)$ and $(y,x)$ to get 
	\[
	\left(\sM\setminus\{x,y\}, (\eta\setminus x)\setminus y\right)+\left(\sM\setminus\{x,y\},- (\eta\setminus x)\setminus y\right)\ \in \widetilde{\cR}.
	\]
	Since every ordered pair $(x,y)$ appearing in the sum is paired with its swap $(y,x)$, it follows that $\tilde{\partial}_{\del}^2\left((\sM,\eta)\right)$ is a sum of elements of $\widetilde{\cR}$, and hence is in $\widetilde{\cR}$ itself. 	
	
	 The contraction case is analogous, using: (i) Given distinct elements $x,y \in E(\sM)$ then $x$ is not a loop in $\sM$ and $y$ is not a loop in $\sM/x$ if and only if $y$ is not a loop in $\sM$ and $x$ is not a loop in $\sM/y$, (ii) The contractions $(\sM/x)/y$ and $(\sM/y)/x$ are equal, and (iii) Since interior products anti-commute, $(\eta/x)/y = -(\eta/y)/x$.
\end{proof}

\begin{example}\label{ex:K4-2}
Continuing the example of $\sM(K_4)$ from Example~\ref{ex:K4-1} notice that no element of this matroid is a loop or a coloop, and so all six terms appear in the deletion and contraction differentials. Moreover, by edge-transitivity of $K_4$, all deletions $\sM(K_4)\setminus i$ are isomorphic, and the same is true for all contractions $\sM(K_4)/i$. From this one computes that both $\partial_{\del}([\sM(K_4),\eta])$ and $\partial_{\con}([\sM(K_4),\eta])$ are zero. Note this agrees with our computations in Figure~\ref{fig:data1}, which shows $\MC_{5}=0$.
\end{example}

%%%%%%%%%%%%%%%%%%%%%%%%%%%%%%%%%%%%%%%%%%%%%%%%%%%
\subsection{Coloop \& Loop Differentials}
%%%%%%%%%%%%%%%%%%%%%%%%%%%%%%%%%%%%%%%%%%%%%%%%%%%
We now define the remaining two differentials on \(\MC_{\bullet,\bullet}\), namely those obtained by deleting coloops and contracting loops. Define the coloop map $\tilde{\partial}_{\clp}$ and loop map $\tilde{\partial}_{\lp}$ to be linear maps $\widetilde{\cM} \to \widetilde{\cM}$ defined on basis elements by:
 \[
 \tilde{\partial}_{\clp}\left((\sM,\eta)\right) \coloneqq \sum_{\substack{x\in E(\sM) \\ \text{$x$ is a coloop}}} \left(\sM\setminus x, \eta \setminus x\right) \quad \quad \text{and} \quad \quad \tilde{\partial}_{\lp}\left((\sM,\eta)\right) \coloneqq \sum_{\substack{x\in E(\sM) \\ \text{$x$ is a loop}}} \left(\sM/x, \eta/x\right),
 \]
and extended linearly. Lemma~\ref{lem:rank-deletion-contraction} shows that these maps have degrees $(0,-1)$ and $(-1,0)$, respectively. Arguments similar to those in the previous section show that $\tilde{\partial}_{\clp}$ and  $\tilde{\partial}_{\lp}$ descend to differentials.

\begin{lemma}\label{lem:vertical-diffs}
	The maps $\tilde{\partial}_{\clp}$ and $\tilde{\partial}_{\lp}$ descend to differentials $\partial_{\clp}$ and $\partial_{\lp}$ on $\MC_{\bullet,\bullet}$, of bidegrees $(0,-1)$ and $(-1,0)$ respectively. 
\end{lemma}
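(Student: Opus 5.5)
The proof follows the template of Lemmas~\ref{lem:deletion-desc} and~\ref{lem:horizontal-diffs}, split into three parts: descent to $\MC$, the bidegree, and the vanishing of the square.

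\emph{Descent.} We check $\tilde{\partial}_{\clp}(\widetilde{\cR})\subset\widetilde{\cR}$ and $\tilde{\partial}_{\lp}(\widetilde{\cR})\subset\widetilde{\cR}$ on the two kinds of generators of $\widetilde{\cR}$.
\begin{enumerate}
\item For $(\sM,\eta)+(\sM,-\eta)$, linearity of $\iota_x$ gives a termwise sum of generators of $\widetilde{\cR}$, exactly as in the deletion case.
\item For $(\sM,\eta)-(\sM',\psi_*\eta)$, an isomorphism $\psi$ sends coloops bijectively to coloops and loops bijectively to loops. We reindex the second sum by $y=\psi(x)$ and apply Lemma~\ref{lem:pushforward-del-con}. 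Each paired difference is then a relation for the restricted isomorphism $\tilde{\psi}$.
\end{enumerate}

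\emph{Bidegree.} Lemma~\ref{lem:rank-deletion-contraction} already gives the bidegrees. Deleting a coloop lowers the rank by one and keeps the nullity, so $\partial_{\clp}$ has bidegree $(0,-1)$. Contracting a loop keeps the rank and lowers the size, hence the nullity, by one, so $\partial_{\lp}$ has bidegree $(-1,0)$.

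\emph{Square zero.} Expand
\[
\tilde{\partial}_{\clp}^2((\sM,\eta))=\sum_{x}\sum_{y}\bigl(\sM\setminus\{x,y\},\iota_y\iota_x\eta\bigr),
\]
where $x$ ranges over coloops of $\sM$ and $y$ over coloops of $\sM\setminus x$. The combinatorial input is that the coloops of $\sM\setminus x$, for $x$ a coloop, are exactly the coloops of $\sM$ other than $x$.
\begin{itemize}
\item[] Since $x$ is a coloop, every basis of $\sM$ contains $x$. So the bases of $\sM\setminus x$ are $\{B\setminus x : B\in\cB(\sM)\}$.
\item[] Hence $y\neq x$ lies in every basis of $\sM\setminus x$ if and only if it lies in every basis of $\sM$.
\end{itemize}
The index set is therefore the set of ordered pairs of distinct coloops of $\sM$, which is symmetric under the swap $(x,y)\mapsto(y,x)$. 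Deletion commutes and $\iota_x,\iota_y$ anticommute, so each pair of swapped terms sums to a generator of $\widetilde{\cR}$. Thus $\tilde{\partial}_{\clp}^2(\widetilde{\cM})\subset\widetilde{\cR}$.

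The loop case is dual. If $x$ is a loop then $\sM/x=\sM\setminus x$ has the same bases as $\sM$. So the loops of $\sM/x$ are exactly the loops of $\sM$ other than $x$, and contraction of distinct elements commutes. Alternatively, one could invoke duality $(\sM/S)^*=\sM^*\setminus S$ to transfer the coloop statement, but the direct argument is just as short.

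The main point needing care is this closure property: the set of coloops (respectively loops) is stable in the sense just described. It is simpler than the non-coloop case of Lemma~\ref{lem:horizontal-diffs}, and I expect no real obstacle.
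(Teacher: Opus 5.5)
Your proposal is correct and takes the route the paper indicates. The paper says only that the arguments of Lemmas~\ref{lem:deletion-desc} and~\ref{lem:horizontal-diffs} carry over, and you supply those details. This includes the one new combinatorial check that makes the swap-pairing work: for a coloop (resp.\ loop) $x$, the coloops of $\sM\setminus x$ (resp.\ loops of $\sM/x$) are exactly the remaining coloops (resp.\ loops) of $\sM$.
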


%%%%%%%%%%%%%%%%%%%%%%%%%%%%%%%%%%%%%%%%%%%%%%%%%%%
\subsection{The Matroid Bicomplexes}
%%%%%%%%%%%%%%%%%%%%%%%%%%%%%%%%%%%%%%%%%%%%%%%%%%%
Having defined four differentials -- $\partial_{\del}, \partial_{\con}, \partial_{\clp}$, and $\partial_{\lp}$ -- we turn to explaining how they can be assembled into four bicomplexes:
\[
(\MC_{\bullet,\bullet}, \partial_{\del}, \partial_{\clp}), \quad \quad  (\MC_{\bullet,\bullet}, \partial_{\lp}, \partial_{\con}), \quad \quad (\MC_{\bullet,\bullet}, \partial_{\del}, \partial_{\con}), \quad \text{and} \quad  (\MC_{\bullet,\bullet}, \partial_{\lp}, \partial_{\clp}).
\]
We will be most interested in the first two, which we call the \emph{(matroid) deletion} and \emph{(matroid) contraction bicomplexes}.  We use the convention that the first differentials are horizontal, the second are vertical,  and that squares in a bicomplex anti-commute. 

\begin{proposition}
	Both  $(\MC_{\bullet,\bullet}, \partial_{\del}, \partial_{\clp})$ and  $(\MC_{\bullet,\bullet}, \partial_{\lp}, \partial_{\con})$ are bicomplexes.
\end{proposition}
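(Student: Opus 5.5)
Since Lemmas~\ref{lem:horizontal-diffs} and~\ref{lem:vertical-diffs} already show that each of $\partial_{\del},\partial_{\clp},\partial_{\con},\partial_{\lp}$ squares to zero and has the stated bidegree, the only remaining point is anti-commutativity of squares:
\[
\partial_{\del}\partial_{\clp}+\partial_{\clp}\partial_{\del}=0 \quad\text{and}\quad \partial_{\lp}\partial_{\con}+\partial_{\con}\partial_{\lp}=0 .
\]
As in the proof of Lemma~\ref{lem:horizontal-diffs}, I will work on the lift $\widetilde{\cM}$ and show that $(\tilde\partial_{\del}\tilde\partial_{\clp}+\tilde\partial_{\clp}\tilde\partial_{\del})(\sM,\eta)\in\widetilde{\cR}$. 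Afterwards I will deduce the contraction statement by duality, or by the verbatim dual argument.

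Expanding, $\tilde\partial_{\clp}\tilde\partial_{\del}(\sM,\eta)$ is a sum over ordered pairs $(x,y)$ with $x$ not a coloop of $\sM$ and $y$ a coloop of $\sM\setminus x$. Each such term is $(\sM\setminus\{x,y\},\iota_y\iota_x\eta)$. Similarly, $\tilde\partial_{\del}\tilde\partial_{\clp}(\sM,\eta)$ is a sum over pairs $(y,x)$ with $y$ a coloop of $\sM$ and $x$ not a coloop of $\sM\setminus y$, with terms $(\sM\setminus\{x,y\},\iota_x\iota_y\eta)=-(\sM\setminus\{x,y\},\iota_y\iota_x\eta)$. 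So the key step is a bijection between the two index sets. Namely, I will show that for distinct $x,y$ the following are equivalent:
\begin{enumerate}
\item[(a)] $x$ is not a coloop of $\sM$ and $y$ is a coloop of $\sM\setminus x$;
\item[(b)] $y$ is a coloop of $\sM$ and $x$ is not a coloop of $\sM\setminus y$.
\end{enumerate}
Once this holds, the terms cancel pairwise modulo the relation $(\sM',\eta')+(\sM',-\eta')\in\widetilde{\cR}$.

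This matroid lemma is the main obstacle, and it is where the argument could fail. For (b)$\Rightarrow$(a), note that deleting a coloop $y$ leaves the bases of $\sM\setminus y$ as $\{B\setminus y\}$. So $x$ not a coloop of $\sM\setminus y$ gives a basis of $\sM$ avoiding $x$, and $x$ is not a coloop of $\sM$. Since $x$ is not a coloop, the bases of $\sM\setminus x$ are exactly the bases of $\sM$ avoiding $x$, and all of them contain $y$; hence $y$ is a coloop of $\sM\setminus x$.

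For (a)$\Rightarrow$(b), the condition says that every basis of $\sM$ avoiding $x$ contains $y$. I expect this not to force $y$ to be a coloop of $\sM$: with $x,y$ parallel in $U_{1,2}$, deleting $x$ makes $y$ a coloop, yet $y$ is not a coloop of $\sM$. So the naive pairing breaks, and I must instead examine the unmatched terms. These are pairs where $\{x,y\}$ is a cocircuit, i.e.\ $x,y$ are in series, so that $\sM\setminus x$ has $y$ as a coloop.

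In that case $\sM\setminus\{x,y\}$ arises both as $(\sM\setminus x)\setminus y$ and as $(\sM\setminus y)\setminus x$. Both orders lie in $\tilde\partial_{\clp}\tilde\partial_{\del}$, since for series elements each deletion makes the other a coloop. These two terms carry opposite orientations, so they cancel within $\tilde\partial_{\clp}\tilde\partial_{\del}$ itself. The remaining terms, those where $y$ is a genuine coloop of $\sM$, match bijectively with $\tilde\partial_{\del}\tilde\partial_{\clp}$ via the equivalence above.

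So the plan is to partition the index pairs into three classes: (i) $y$ a coloop of $\sM$ and $x$ not; (ii) $\{x,y\}$ a series pair cocircuit; and (iii) pairs of coloops, which do not occur in either sum. I will check the cancellations class by class. Finally, I apply duality, $(\sM/S)^*=\sM^*\setminus S$ together with loops corresponding to coloops, to transport the identity to the pair $(\partial_{\lp},\partial_{\con})$; alternatively I will rerun the argument with series replaced by parallel.
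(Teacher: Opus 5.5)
Your argument is correct, but it takes a different route from the paper. The paper never analyzes the cross terms directly. It observes that $\tilde\partial_{\del}+\tilde\partial_{\clp}$ is simply deletion of every element. The square of that operator is therefore a sum over \emph{all} ordered pairs of distinct elements, so the swap $(x,y)\leftrightarrow(y,x)$ is automatically available. Hence $(\partial_{\del}+\partial_{\clp})^2=0$ by the same sign-reversing pairing as in Lemma~\ref{lem:horizontal-diffs}, and expanding the square with $\partial_{\del}^2=\partial_{\clp}^2=0$ isolates the anticommutator.

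That trick sidesteps exactly the matroid subtlety you ran into: ``$y$ is a coloop of $\sM\setminus x$'' does not imply ``$y$ is a coloop of $\sM$''. Your direct analysis costs a small lemma, but it gives finer information. The terms of $\tilde\partial_{\clp}\tilde\partial_{\del}$ indexed by series pairs (cocircuits $\{x,y\}$) cancel among themselves, and the remaining terms match bijectively with those of $\tilde\partial_{\del}\tilde\partial_{\clp}$. The paper's argument only shows that the total cancels and does not reveal this term-level picture.

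When you write it up, tidy three small points:
\begin{enumerate}
\item Make explicit the converse you use for the matching. If $y$ is a coloop of $\sM$ and $x$ is not, then a basis avoiding $x$ contains $y$, so $x$ is not a coloop of $\sM\setminus y$. Your stated equivalence was only proved in the other direction.
\item The identity $(\sM',\iota_x\iota_y\eta)=-(\sM',\iota_y\iota_x\eta)$ holds only modulo $\widetilde{\cR}$, not in $\widetilde{\cM}$.
\item In your $U_{1,2}$ example, the relevant feature is that the two elements are in series; they happen to be parallel as well.
\end{enumerate}
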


\begin{proof}
	By Lemma~\ref{lem:horizontal-diffs} and Lemma~\ref{lem:vertical-diffs} we know in both cases the horizontal and vertical maps give chain complexes. All that remains is to check the differentials anti-commute. Consider the ``total deletion'' operator on $\widetilde{\cM}_{\bullet,\bullet}$ given by $\tilde{\partial}_{\del}^{\tot} \coloneqq \tilde{\partial}_{\del}+\tilde{\partial}_{\clp}$. The same arguments used in the proofs of Lemmas~\ref{lem:deletion-desc} and \ref{lem:horizontal-diffs} show that $\tilde{\partial}_{\del}^{\tot}$ descends to a linear map $\partial_{\del}^{\tot}$ on $\MC$ such that $(\partial_{\del}^{\tot})^2=0$. Expanding this relation gives
	\[
	0=(\partial_{\del}+\partial_{\clp})^2
	=\partial_{\del}^2+\big(\partial_{\del}\partial_{\clp}+\partial_{\clp}\partial_{\del}\big)+\partial_{\clp}^2.
	\]
	Since $\partial_{\del}^2=\partial_{\clp}^2=0$ (see Lemmas~\ref{lem:horizontal-diffs} and \ref{lem:vertical-diffs}) we conclude, as needed, that $\partial_{\del}\partial_{\clp}+\partial_{\clp}\partial_{\del}=0$. This verifies that $(\MC_{\bullet,\bullet},\partial_{\del},\partial_{\clp})$ is a bicomplex. The contraction case is analogous.
\end{proof} 

The deletion and contraction matroid bicomplexes constructed above are related via matroid duality. Since the ground sets of $\sM$ and $\sM^*$ are the same, an orientation on $\sM$ is an orientation on $\sM^*$ and vice versa. Therefore, there is a linear map of (ungraded) vector spaces $(-)^*:\widetilde{\cM} \to \widetilde{\cM}$ given by $(\sM,\eta)\mapsto (\sM^*,\eta)$. The fact that $(\sM^*)^*=\sM$ implies that this is an isomorphism. Further, one may check that the subspace $\widetilde{\cR}$ is stable under taking duals and so duality gives an isomorphism $(-)^*:\MC \to \MC$ of \emph{ungraded} vector spaces. This actually extends to the bicomplexes we have constructed. However, since duality swaps rank and nullity the induced isomorphism does not strictly preserve the grading, but instead transposes it. Making this precise, a \emph{grading-transposing morphism} of bicomplexes $\phi:(C_{\bullet,\bullet},d_{h},d_{v}) \to (D_{\bullet,\bullet},\partial_{h},\partial_{v})$ is a morphism $\phi:C_{\bullet,\bullet} \to D_{\bullet,\bullet}$ such that $\phi(C_{k,r}) \subset D_{r,k}$ for all $k,r \in \ZZ$,  $\phi\circ d_{h} = \partial_{v} \circ \phi$, and $\phi\circ d_{v} = \partial_{h} \circ \phi$.  A \emph{grading-transposing isomorphism} is a grading-transposing morphism that is an isomorphism on the underlying vector spaces. 

\begin{proposition}\label{prop:duality}
	Duality induces a grading-transposing isomorphism of bicomplexes:
	\[
	\begin{tikzcd}[ampersand replacement = \&, row sep = 1em, column sep = 3.5em]
	\left(\MC_{\bullet,\bullet},\partial_{\del},\partial_{\clp}\right) \arrow[r,leftrightarrow,"\sim"] \& \left(\MC_{\bullet,\bullet},\partial_{\lp},\partial_{\con}\right) \\
	\left[\sM,\eta\right] \arrow[r, mapsto] \& \left[\sM^*,\eta\right].
	\end{tikzcd}
	\]
\end{proposition}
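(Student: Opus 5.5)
We will work first on the free space $\widetilde{\cM}$ and then pass to the quotient. Define $D:\widetilde{\cM}\to\widetilde{\cM}$ by $(\sM,\eta)\mapsto(\sM^*,\eta)$. This makes sense because $E(\sM^*)=E(\sM)$, so $\eta$ is also an orientation on $\sM^*$. Since $(\sM^*)^*=\sM$, the map $D$ is an involution and hence a linear isomorphism. Because $\rk(\sM^*)=\nul(\sM)$ and $\nul(\sM^*)=\rk(\sM)$, it sends $\widetilde{\cM}_{k,r}$ onto $\widetilde{\cM}_{r,k}$.

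The first step is to check that $D(\widetilde{\cR})\subset\widetilde{\cR}$. The generator $(\sM,\eta)+(\sM,-\eta)$ maps to $(\sM^*,\eta)+(\sM^*,-\eta)$, which is again a generator. For the other type, a matroid isomorphism $\psi:\sM\to\sM'$ is a bijection of ground sets carrying bases to bases. It therefore carries complements of bases to complements of bases, so the same $\psi$ is an isomorphism $\sM^*\to\sM'^*$ with the same pushforward $\psi_*$. Thus $(\sM,\eta)-(\sM',\psi_*\eta)$ maps to $(\sM^*,\eta)-(\sM'^*,\psi_*\eta)\in\widetilde{\cR}$. Since $D$ is an involution, this gives $D(\widetilde{\cR})=\widetilde{\cR}$. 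So $D$ descends to an involutive isomorphism $[\sM,\eta]\mapsto[\sM^*,\eta]$ on $\MC$ with $D(\MC_{k,r})=\MC_{r,k}$.

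The main step is compatibility with the differentials, and I would check it at the level of $\widetilde{\cM}$. We need $D\circ\tilde\partial_{\del}=\tilde\partial_{\con}\circ D$ and $D\circ\tilde\partial_{\clp}=\tilde\partial_{\lp}\circ D$; the two opposite-direction identities then follow by conjugating with the involution $D$. This rests on three facts.
\begin{enumerate}
\item $x$ is a coloop of $\sM$ if and only if $x$ is a loop of $\sM^*$, since $x$ lies in every basis of $\sM$ exactly when it lies in no complement of a basis.
\item Deletion and contraction are dual: $(\sM\setminus x)^*=\sM^*/x$. This is the stated identity $(\sM/S)^*=\sM^*\setminus S$ applied to $\sM^*$ together with $(\sM^*)^*=\sM$.
\item The induced orientations agree, because $\eta\setminus x=\iota_x(\eta)=\eta/x$ by Definition~\ref{def:orientation}.
\end{enumerate}
Putting these together, the summands are matched term by term:
\[
D\bigl(\tilde\partial_{\del}(\sM,\eta)\bigr)=\sum_{\substack{x\in E(\sM)\\ x\text{ not a coloop of }\sM}}\bigl((\sM\setminus x)^*,\iota_x\eta\bigr)=\sum_{\substack{x\in E(\sM^*)\\ x\text{ not a loop of }\sM^*}}\bigl(\sM^*/x,\iota_x\eta\bigr)=\tilde\partial_{\con}\bigl(D(\sM,\eta)\bigr).
\]
The coloop/loop case is identical, with the index set restricted to coloops of $\sM$, which are exactly the loops of $\sM^*$.

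Passing to $\MC$, $D$ intertwines the horizontal differential $\partial_{\del}$ of the deletion bicomplex with the vertical differential $\partial_{\con}$ of the contraction bicomplex. It likewise intertwines the vertical $\partial_{\clp}$ with the horizontal $\partial_{\lp}$, and it transposes bidegrees. Hence $D$ is a grading-transposing morphism of bicomplexes, and it is bijective, so it is a grading-transposing isomorphism. The inverse is the same map read in the other direction. The only real content is the duality identity $(\sM\setminus x)^*=\sM^*/x$, which the paper takes as known. The remaining sign bookkeeping is trivial, because deletion and contraction use the same interior product $\iota_x$.
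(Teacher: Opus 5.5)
Your proof is correct and follows the same route as the paper, which likewise reduces compatibility of the differentials to the loop/coloop duality and the identity $(\sM\setminus x)^*=\sM^*/x$. You also fill in details the paper leaves implicit: you verify $D(\widetilde{\cR})=\widetilde{\cR}$, which the paper only asserts before the proposition, and you note that the orientations match term by term because deletion and contraction are both defined by the same interior product $\iota_x$.
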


\begin{proof}
	As discussed above, the fact that $(\sM^*)^*=\sM$ implies this morphism is an isomorphism of vector spaces. Since duality swaps rank and nullity, duality sends $(\MC_{k,r})$ isomorphically onto $\MC_{r,k}$. That the differentials are compatible follows from the fact that: (i) An element $x\in E(\sM)$ is a loop in $\sM$ if and only if it is a coloop in $\sM^*$ and (ii) $(\sM \setminus x)^*=\sM^*/x$. 
\end{proof}

\begin{remark}
In fact, every horizontal differential anti-commutes with every vertical differential. Equivalently, besides the deletion and contraction bicomplexes, the diagonal pairs
\[
(\MC_{\bullet,\bullet},\partial_{\del},\partial_{\con})
\qquad\text{and}\qquad
(\MC_{\bullet,\bullet},\partial_{\lp},\partial_{\clp})
\]
also define bicomplexes. This follows from the same sign-reversing pairing argument used above, now applied to the sums \(\partial_{\del}+\partial_{\con}\) and \(\partial_{\lp}+\partial_{\clp}\). Moreover, matroid duality exchanges \(\partial_{\del}\) with \(\partial_{\con}\) and \(\partial_{\lp}\) with \(\partial_{\clp}\), so each diagonal bicomplex is preserved by duality after transposing the bigrading.
\end{remark}

%%%%%%%%%%%%%%%%%%%%%%%%%%%%%%%%%%%%%%%%%%%%%%%%%%%
\subsection{Combinatorial Sub-bicomplexes}
%%%%%%%%%%%%%%%%%%%%%%%%%%%%%%%%%%%%%%%%%%%%%%%%%%%

Many natural classes of matroids, such as binary, regular, and graphic matroids, give rise to sub-bicomplexes of the complexes constructed above. Let $\bP$ be a matroid property that is stable under isomorphism, so that it is well-defined on classes $[\sM,\eta]\in\MC$. Define the \emph{$\bP$-subspace} $\MC^{\bP}$ to be the span of all classes with property $\bP$. We can give $\MC^{\bP}$ the nullity-rank bigrading to define $\MC^{\bP}_{\bullet,\bullet}$.

Given the centrality of duality, we define $\bP^{*}$ by saying $\sM$ has $\bP^*$ if and only if $\sM^*$ has $\bP$. For example, if $\bP$ is the property of being loopless, being simple, or being graphic the corresponding $\bP^*$ properties are being coloopless, cosimple, or cographic. The isomorphism in Proposition~\ref{prop:duality} restricts to give an isomorphism between $\MC^{\bP}$ and $\MC^{\bP^{*}}$. Once again this isomorphism swaps the bigrading. Some matroid properties, such as being regular or being representable over a field $\FF$, are stable under duality, in which case $\MC^{\bP} = \MC^{\bP^*}$.

Recall that $\bP$ is \emph{deletion closed} (resp.\ \emph{contraction closed}) if $\sM$ having $\bP$ implies $\sM\setminus x$ (resp.\ $\sM/x$) has $\bP$ for all $x\in E(\sM)$. Note that $\bP$ is deletion closed if and only if $\bP^*$ is contraction closed. As expected, deletion closed properties give subcomplexes of the deletion bicomplex and contraction closed properties give subcomplexes of the contraction bicomplex. 
 
\begin{proposition}\label{lem:property-p-subcomplex}
    Let $\bP$ be a property of matroids stable under isomorphism.
    \begin{enumerate}
        \item If $\bP$ is deletion closed, then $(\MC^{\bP}_{\bullet,\bullet}, \partial_{\del}, \partial_{\clp})$ is a sub-bicomplex of $(\MC_{\bullet,\bullet}, \partial_{\del}, \partial_{\clp})$.
        \item If $\bP$ is contraction closed, then $(\MC^{\bP}_{\bullet,\bullet}, \partial_{\lp}, \partial_{\con})$ is a sub-bicomplex of $(\MC_{\bullet,\bullet}, \partial_{\lp}, \partial_{\con})$.
        \item If $\bP$ is deletion closed, then duality gives a grading-transposing isomorphism between $(\MC^{\bP}_{\bullet,\bullet}, \partial_{\del}, \partial_{\clp})$ and $(\MC^{\bP^{*}}_{\bullet,\bullet}, \partial_{\lp}, \partial_{\con})$.
    \end{enumerate}
\end{proposition}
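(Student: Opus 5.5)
The plan is to verify directly that the subspace $\MC^{\bP}_{\bullet,\bullet}$ is well defined as a bigraded subspace of $\MC_{\bullet,\bullet}$, and then that each relevant differential maps it into itself.

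\emph{Well-definedness.} Since $\bP$ is stable under isomorphism, the relations in $\widetilde{\cR}$ never mix a matroid having $\bP$ with one lacking it. Indeed, both generators $(\sM,\eta)+(\sM,-\eta)$ and $(\sM,\eta)-(\sM',\psi_*(\eta))$ involve only isomorphic matroids. Hence $\widetilde{\cM}$ splits as the direct sum of the span of pairs with $\bP$ and the span of pairs without $\bP$, and $\widetilde{\cR}$ splits compatibly. So $\MC^{\bP}$ is the image of the first summand, and it is a bigraded direct summand of $\MC$. This is the only slightly delicate point: we need $\MC^{\bP}$ to be a genuine subspace spanned by classes, not something collapsed by the relations.

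\emph{Parts (1) and (2).} For (1), take a generator $[\sM,\eta]$ with $\sM$ having $\bP$. Every term of $\partial_{\del}[\sM,\eta]$ and of $\partial_{\clp}[\sM,\eta]$ has the form $[\sM\setminus x,\eta\setminus x]$. Since $\bP$ is deletion closed, each $\sM\setminus x$ has $\bP$, so both differentials preserve $\MC^{\bP}$. The bidegrees are inherited from the ambient bicomplex. The identities $\partial^2=0$ and the anticommutation of the two differentials also hold on the subspace, since they hold in the ambient bicomplex. Part (2) is identical, using $\sM/x$ and contraction closure.

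\emph{Part (3).} Start from the grading-transposing isomorphism of Proposition~\ref{prop:duality}, $[\sM,\eta]\mapsto[\sM^*,\eta]$. By the definition of $\bP^*$, this map sends $\MC^{\bP}$ bijectively onto $\MC^{\bP^*}$; its inverse is duality again, because $(\sM^*)^*=\sM$. Since $\bP$ is deletion closed, $\bP^*$ is contraction closed, so by (1) and (2) both sides are sub-bicomplexes. A restriction of a grading-transposing isomorphism that carries one sub-bicomplex bijectively onto another is itself a grading-transposing isomorphism of those sub-bicomplexes. This gives (3).
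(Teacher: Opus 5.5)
Your proof is correct and takes the same approach as the paper. The paper's proof is the one-line remark that the statement follows from the earlier constructions (Proposition~\ref{prop:duality}) together with deletion/contraction closure, and you have written out those routine checks explicitly. The direct-summand step is more than you need, since the span of bihomogeneous classes is already a bigraded subspace, but it does no harm.
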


\begin{proof}
	This follows from previous constructions and the deletion/contraction conditions. 
\end{proof}

\begin{remark}
A property \(\bP\) is \emph{minor closed} if it is both deletion closed and contraction closed. Equivalently, \(\bP\) is minor closed if every minor of a matroid with property \(\bP\) again has property \(\bP\). Thus, if \(\bP\) is minor closed and stable under isomorphism, then both $(\MC^{\bP}_{\bullet,\bullet}, \partial_{\del}, \partial_{\clp})$ and $(\MC^{\bP}_{\bullet,\bullet}, \partial_{\lp}, \partial_{\con})$ are well-defined sub-bicomplexes. 

In general, being minor closed does \emph{not} imply $\bP=\bP^*$. For example, being graphic and being cographic are both minor closed, but duality interchanges these two properties rather than preserving either one. By contrast, being binary, being regular, and being representable over a field \(\FF\) are minor closed and stable under duality.  When $\bP$ is minor closed but not self-dual, Proposition~\ref{lem:property-p-subcomplex} yields two pairs of grading-transposing isomorphic bicomplexes:
\[
(\MC^{\bP}_{\bullet,\bullet}, \partial_{\del}, \partial_{\clp})
\cong
(\MC^{\bP^*}_{\bullet,\bullet}, \partial_{\lp}, \partial_{\con}) \quad \quad \text{and} \quad \quad (\MC^{\bP}_{\bullet,\bullet}, \partial_{\lp}, \partial_{\con}) \cong (\MC^{\bP^*}_{\bullet,\bullet}, \partial_{\del}, \partial_{\clp}).
\]
When $\bP$ is self-dual these two pairs coincide, so there is essentially a single bicomplex up to isomorphism (as with regular matroids or $\FF$-representable matroids). Otherwise, as with graphic and cographic matroids, the deletion and contraction variants give genuinely distinct complexes coming in dual pairs.
\end{remark}

\begin{center}
\begin{tabular}{ c | c | c | c | c }
    $\MC^{\bP}_{\bullet,\bullet}$ & $\bP$ & $\sM$ is... & $\partial$ & Name \\ \hline
    $\MC^{\lpl}_{\bullet,\bullet}$ & $\lpl$ & a loopless matroid & $\del$ & the loopless matroid complex \\ \hline
    $\MC^{\simp}_{\bullet,\bullet}$ & $\simp$ & a simple matroid & $\del$ & the simple matroid complex \\ \hline
    $\MC^{\colpl}_{\bullet,\bullet}$ & $\colpl$ & a coloopless matroid & $\con$ & the coloopless matroid complex \\ \hline
    $\MC^{\cosimp}_{\bullet,\bullet}$ & $\cosimp$ & a cosimple matroid & $\con$ & the cosimple matroid complex \\ \hline
    $\MC^{\FF}_{\bullet,\bullet}$ & $\FF$ & representable over the field $\FF$ & $\del$ / $\con$ & the $\FF$-representable matroid complex \\ \hline
    $\MC^{\reg}_{\bullet,\bullet}$ & $\reg$ & is a regular matroid & $\del$ / $\con$ & the regular matroid complex \\ \hline
    $\MC^{\grph}_{\bullet,\bullet}$ & $\grph$ & is a graphic matroid & $\del$ / $\con$ & the graphic matroid complex \\ \hline
    $\MC^{\cogrph}_{\bullet,\bullet}$ & $\cogrph$ & is a cographic matroid & $\del$ / $\con$ & the cographic matroid complex \\ 
\end{tabular}
\end{center}

%%%%%%%%%%%%%%%%%%%%%%%%%%%%%%%%%%%%%%%%%%%%%%%%%%%
\subsection{Total Complexes and Filtrations}\label{subsec:tot-filt}
%%%%%%%%%%%%%%%%%%%%%%%%%%%%%%%%%%%%%%%%%%%%%%%%%%%

We now record the ordinary chain complexes obtained from the two bicomplexes above. There are two natural ways to pass from a bicomplex to singly graded complexes. The first is by totalization. By definition we have
\[
\operatorname{Tot}^{\del}_n(\MC_{\bullet, \bullet})\coloneqq \bigoplus_{k+r=n}\MC_{k,r}
\qquad\text{and}\qquad
\operatorname{Tot}^{\con}_n(\MC_{\bullet,\bullet})\coloneqq \bigoplus_{k+r=n}\MC_{k,r},
\]
with differentials
\[
\partial_{\del}^{\tot}\coloneqq \partial_{\del}+\partial_{\clp}
\qquad\text{and}\qquad
\partial_{\con}^{\tot}\coloneqq \partial_{\lp}+\partial_{\con}.
\]
These are chain complexes since $(\partial_{\del}^{\tot})^2=(\partial_{\con}^{\tot})^2=0$. Since a class in $\MC_{k,r}$ has ground set size $k+r$, the natural total degree is the size of the ground set. Writing $\MC_{\bullet}$ for $\MC$ considered as a $\ZZ$-graded vector space with grading given by ground set size, we may therefore identify the two totalizations with the complexes $(\MC_{\bullet}, \partial_{\del}^{\tot})$ and $(\MC_{\bullet}, \partial_{\con}^{\tot})$. These two complexes are related by matroid duality.

\begin{lemma}\label{lem:tot-comp-tduality}
	Matroid duality induces an isomorphism of complexes:
	\[
	\begin{tikzcd}[ampersand replacement = \&, row sep = 1em, column sep = 3.5em]
	\left(\MC_{\bullet},\partial_{\del}^{\tot}\right) \arrow[r,leftrightarrow,"\sim"] \& \left(\MC_{\bullet},\partial_{\con}^{\tot}\right).
	\end{tikzcd}
	\]
\end{lemma}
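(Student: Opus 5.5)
The plan is to deduce Lemma~\ref{lem:tot-comp-tduality} directly from Proposition~\ref{prop:duality} by totalizing. The duality map $D\colon[\sM,\eta]\mapsto[\sM^*,\eta]$ is already known to be a well-defined isomorphism of the underlying ungraded vector space $\MC$, since $\widetilde{\cR}$ is stable under duality and $(\sM^*)^*=\sM$. It is a grading-transposing isomorphism of bicomplexes, sending $\MC_{k,r}$ onto $\MC_{r,k}$. The first step is to check that $D$ respects the single grading by ground-set size. A class in $\MC_{k,r}$ has ground set of size $k+r$, and $\sM^*$ has the same ground set as $\sM$, so $D$ sends $\bigoplus_{k+r=n}\MC_{k,r}$ onto $\bigoplus_{k+r=n}\MC_{r,k}$, which is the same space. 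Hence $D$ is a degree-zero linear isomorphism $\MC_\bullet\to\MC_\bullet$.

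The second step is compatibility with the differentials. Proposition~\ref{prop:duality} gives two identities:
\[
D\circ\partial_{\del}=\partial_{\con}\circ D
\qquad\text{and}\qquad
D\circ\partial_{\clp}=\partial_{\lp}\circ D.
\]
Here $\partial_{\del}$ is horizontal on the deletion side and $\partial_{\con}$ is vertical on the contraction side, while $\partial_{\clp}$ is vertical and $\partial_{\lp}$ is horizontal. Adding the two identities gives
\[
D\circ\partial_{\del}^{\tot}=D\circ(\partial_{\del}+\partial_{\clp})=(\partial_{\con}+\partial_{\lp})\circ D=\partial_{\con}^{\tot}\circ D.
\]
So $D$ is a chain map, and its inverse (again $D$) is one as well.

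For safety I would verify these identities at the level of generators rather than only cite them. The argument is the same one sketched in the proof of Proposition~\ref{prop:duality}. First, $x$ is a coloop of $\sM$ if and only if it is a loop of $\sM^*$, and $x$ is a non-coloop of $\sM$ if and only if it is a non-loop of $\sM^*$. Second, $(\sM\setminus x)^*=\sM^*/x$. Third, the induced orientations agree, because both $\eta\setminus x$ and $\eta/x$ are defined as $\iota_x(\eta)$. With these facts the summands of $\tilde\partial^{\tot}_{\del}(\sM,\eta)$ match term by term with those of $\tilde\partial^{\tot}_{\con}(\sM^*,\eta)$, with no extra signs.

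The main point needing care is exactly this sign issue. Because duality transposes the bigrading, one might worry about a sign twist: in some conventions, totalizing an anticommuting bicomplex produces $\pm$ signs depending on the bidegree. Here the totals are defined simply as $\partial_{\del}+\partial_{\clp}$ and $\partial_{\lp}+\partial_{\con}$, with no bidegree-dependent signs, so no twist arises and the identity map on orientations suffices.
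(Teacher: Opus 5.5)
Your proposal is correct and follows the paper's route. The paper simply says the lemma is immediate from Proposition~\ref{prop:duality} after passing to total complexes. Your argument spells that out: duality preserves ground-set size, summing $D\circ\partial_{\del}=\partial_{\con}\circ D$ and $D\circ\partial_{\clp}=\partial_{\lp}\circ D$ gives $D\circ\partial^{\tot}_{\del}=\partial^{\tot}_{\con}\circ D$, and no sign twist arises because the total differentials are plain sums.
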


\begin{proof}
This is immediate from Proposition~\ref{prop:duality} after passing to total complexes.
\end{proof}

The second construction is to retain only one of the two differentials. Since each of the maps $\partial_{\del}$, $\partial_{\clp}$, $\partial_{\con}$, and $\partial_{\lp}$ lowers the ground set size by one, each determines a chain complex on $\MC_{\bullet}$. Thus, in addition to the total complexes, we obtain the four complexes
\[
(\MC_{\bullet},\partial_{\del}),\quad
(\MC_{\bullet},\partial_{\clp}),\quad
(\MC_{\bullet},\partial_{\con}),\quad
(\MC_{\bullet},\partial_{\lp}).
\]
We will reserve the adjective ``total'' for the differentials $\partial_{\del}^{\tot}$ and $\partial_{\con}^{\tot}$. Each total complex carries two natural increasing filtrations, one by rank and one by nullity. For the deletion total complex the filtrations are given by
\[
F^{\rk}_p\operatorname{Tot}^{\del}_n(\MC_{\bullet,\bullet})\coloneqq \bigoplus_{\substack{k+r=n\\ r\le p}}\MC_{k,r}
\qquad\text{and}\qquad
F^{\nul}_p\operatorname{Tot}^{\del}_n(\MC_{\bullet,\bullet})\coloneqq \bigoplus_{\substack{k+r=n\\ k\le p}}\MC_{k,r},
\]
and these are preserved by $\partial_{\del}^{\tot}$. Given a filtered complex $F_{p}C_{\bullet}$ we write $\gr_p^F C_\bullet$ for the associated graded complex defined by
\[
\gr_p^F C_\bullet\coloneqq F_pC_\bullet/F_{p-1}C_\bullet.
\]
In the case of the filtrations on the deletion total complex we have
\[
\gr^{\rk}_p\operatorname{Tot}^{\del}_n(\MC_{\bullet,\bullet})\cong \MC_{n-p,p}
\qquad\text{and}\qquad
\gr^{\nul}_p\operatorname{Tot}^{\del}_n(\MC_{\bullet,\bullet})\cong \MC_{p,n-p},
\]
with induced differentials $\partial_{\del}$ and $\partial_{\clp}$ respectively. Thus, after the evident reindexing of total degree, the associated graded pieces are the fixed-rank complexes $(\MC_{\bullet,r},\partial_{\del})$ and the fixed-nullity complexes $(\MC_{k,\bullet},\partial_{\clp})$.

The situation for the contraction total complex is analogous. Again there are two natural increasing filtrations, now given by
\[
F^{\rk}_p\operatorname{Tot}^{\con}_n(\MC_{\bullet,\bullet})\coloneqq \bigoplus_{\substack{k+r=n\\ r\le p}}\MC_{k,r}
\qquad\text{and}\qquad
F^{\nul}_p\operatorname{Tot}^{\con}_n(\MC_{\bullet,\bullet})\coloneqq \bigoplus_{\substack{k+r=n\\ k\le p}}\MC_{k,r},
\]
which are preserved by $\partial_{\con}^{\tot}$. The associated graded complexes are then
\[
\gr^{\rk}_p\operatorname{Tot}^{\con}_n(\MC_{\bullet,\bullet})\cong \MC_{n-p,p}
\qquad\text{and}\qquad
\gr^{\nul}_p\operatorname{Tot}^{\con}_n(\MC_{\bullet,\bullet})\cong \MC_{p,n-p},
\]
with induced differentials $\partial_{\lp}$ and $\partial_{\con}$ respectively. These are the fixed-rank complexes $(\MC_{\bullet,r},\partial_{\lp})$ and fixed-nullity complexes $(\MC_{k,\bullet},\partial_{\con})$. Since the bicomplexes are supported in the first quadrant, all of these filtrations are bounded below and exhaustive. Lastly, these constructions extend to the combinatorial sub-bicomplexes from the previous section.

\begin{proposition}\label{prop:property-totalizations}
Let \(\bP\) be a matroid property stable under isomorphism.
\begin{enumerate}
    \item If \(\bP\) is deletion closed, then all constructions of this subsection yield \(\bP\)-subcomplexes
    \[
 	(\MC^{\bP}_{\bullet}, \partial_{\del}^{\tot}) \subset (\MC_{\bullet}, \partial_{\del}^{\tot}) \qquad
	(\MC^{\bP}_{\bullet}, \partial_{\del}) \subset (\MC_{\bullet}, \partial_{\del}) \qquad
	(\MC^{\bP}_{\bullet}, \partial_{\clp}) \subset (\MC_{\bullet}, \partial_{\clp})
    \]
    together with the induced filtrations, associated graded pieces, and decompositions.

    \item If \(\bP\) is contraction closed, then all constructions of this subsection yield \(\bP\)-subcomplexes
    \[
 	(\MC^{\bP}_{\bullet}, \partial_{\con}^{\tot}) \subset (\MC_{\bullet}, \partial_{\con}^{\tot}) \qquad
	(\MC^{\bP}_{\bullet}, \partial_{\con}) \subset (\MC_{\bullet}, \partial_{\con}) \qquad
	(\MC^{\bP}_{\bullet}, \partial_{\lp}) \subset (\MC_{\bullet}, \partial_{\lp})
    \]
    together with the induced filtrations, associated graded pieces, and decompositions.

    \item Matroid duality identifies each deletion \(\bP\)-subcomplex in {\rm(1)} with the corresponding contraction \(\bP^*\)-subcomplex in {\rm(2)}. More precisely, under duality the rank and nullity filtrations are interchanged.
\end{enumerate}
\end{proposition}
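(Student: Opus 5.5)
The plan is to reduce everything to Proposition~\ref{lem:property-p-subcomplex} together with the observation that $\MC^{\bP}_{\bullet,\bullet}$ is a \emph{bigraded} subspace. Since $\bP$ is stable under isomorphism, $\MC^{\bP}$ is spanned by classes $[\sM,\eta]$ with $\sM$ having $\bP$. Each such class lies in a single $\MC_{k,r}$, so $\MC^{\bP}=\bigoplus_{k,r}\MC^{\bP}_{k,r}$ with $\MC^{\bP}_{k,r}=\MC^{\bP}\cap\MC_{k,r}$.

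For (1), assume $\bP$ is deletion closed. Both $\partial_{\del}$ and $\partial_{\clp}$ send a generator $[\sM,\eta]$ to a signed sum of classes $[\sM\setminus x,\eta\setminus x]$, and each $\sM\setminus x$ has $\bP$. Hence each of $\partial_{\del}$, $\partial_{\clp}$ and $\partial_{\del}^{\tot}=\partial_{\del}+\partial_{\clp}$ preserves $\MC^{\bP}$, giving the three subcomplexes.

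For the filtrations, I would define
\[
F^{\rk}_p\MC^{\bP}_n \coloneqq \MC^{\bP}_n\cap F^{\rk}_p\operatorname{Tot}^{\del}_n = \bigoplus_{k+r=n,\ r\le p}\MC^{\bP}_{k,r},
\]
and define $F^{\nul}_p$ in the same way. These are preserved by $\partial_{\del}^{\tot}$ because it is preserved both on $\MC^{\bP}$ and on the ambient filtration. Because everything splits along the bigrading, $F_{p-1}\cap \MC^{\bP}$ is a direct summand of $F_p\cap\MC^{\bP}$. So the induced map on associated graded pieces, $\gr_p^F\MC^{\bP}\to\gr_p^F\MC$, is injective with image $\MC^{\bP}_{n-p,p}$ (respectively $\MC^{\bP}_{p,n-p}$), and its induced differential is $\partial_{\del}$ (respectively $\partial_{\clp}$). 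The same bigraded splitting gives the decompositions.

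Part (2) is the same argument with contraction in place of deletion. Alternatively, it follows from (1) applied to $\bP^*$, using that $\bP$ is contraction closed exactly when $\bP^*$ is deletion closed.

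For (3), I would invoke Proposition~\ref{lem:property-p-subcomplex}(3), which gives the grading-transposing isomorphism of bicomplexes $[\sM,\eta]\mapsto[\sM^*,\eta]$ from $\MC^{\bP}_{\bullet,\bullet}$ to $\MC^{\bP^*}_{\bullet,\bullet}$. Passing to totalizations, as in Lemma~\ref{lem:tot-comp-tduality}, yields an isomorphism $(\MC^{\bP}_\bullet,\partial^{\tot}_{\del})\cong(\MC^{\bP^*}_\bullet,\partial^{\tot}_{\con})$. This map intertwines the single differentials as follows: $\partial_{\del}\leftrightarrow\partial_{\lp}$ and $\partial_{\clp}\leftrightarrow\partial_{\con}$. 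Since it sends $\MC^{\bP}_{k,r}$ to $\MC^{\bP^*}_{r,k}$, it carries $F^{\rk}_p$ onto $F^{\nul}_p$ and $F^{\nul}_p$ onto $F^{\rk}_p$. This is exactly the statement that the rank and nullity filtrations are interchanged, and it also matches the associated graded pieces.

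The only point needing care is that passing to a subcomplex commutes with taking associated graded pieces. In general, intersecting a filtration with a subcomplex need not give an injective map on $\gr$. The fix is that every filtration here comes from a direct-sum bigrading that $\MC^{\bP}$ respects, so the compatibility is automatic.
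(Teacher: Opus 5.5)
Your route is the paper's: its proof just cites Proposition~\ref{lem:property-p-subcomplex} plus the definitions, and your bigraded-splitting bookkeeping for (1) and (2) correctly unpacks that. There is, however, a concrete error in (3).

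\textbf{The pairing of differentials under duality is wrong.} You pair $\partial_{\del}\leftrightarrow\partial_{\lp}$ and $\partial_{\clp}\leftrightarrow\partial_{\con}$. The isomorphism $\phi[\sM,\eta]=[\sM^*,\eta]$ of Proposition~\ref{lem:property-p-subcomplex}(3) is \emph{grading-transposing}. It therefore sends the horizontal differential of the source to the \emph{vertical} differential of the target:
\[
\phi\circ\partial_{\del}=\partial_{\con}\circ\phi
\qquad\text{and}\qquad
\phi\circ\partial_{\clp}=\partial_{\lp}\circ\phi .
\]
This follows from two facts: $(\sM\setminus x)^*=\sM^*/x$, and $x$ is a coloop of $\sM$ if and only if it is a loop of $\sM^*$. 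Bidegrees force it too. $\partial_{\del}$ has bidegree $(-1,0)$, so after transposition it must correspond to a map of bidegree $(0,-1)$, and $\partial_{\lp}$ is not one.

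Here is an explicit failure of your pairing. Take $E(U_{0,1})=\{\ell\}$ and $\eta=\ell$. Then $\partial_{\del}[U_{0,1},\ell]=[\varnothing,1]$. But $\phi[U_{0,1},\ell]=[U_{1,1},\ell]$ has no loops, so $\partial_{\lp}\phi[U_{0,1},\ell]=0\neq\phi\,\partial_{\del}[U_{0,1},\ell]$. By contrast, $\partial_{\con}[U_{1,1},\ell]=[\varnothing,1]$.

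Your pairing also contradicts your own correct observation that $F^{\rk}_p$ is carried to $F^{\nul}_p$. The piece $\gr^{\rk}_p\Tot^{\del}$ carries $\partial_{\del}$, while $\gr^{\nul}_p\Tot^{\con}$ carries $\partial_{\con}$. Matching associated graded pieces therefore requires $\partial_{\del}\leftrightarrow\partial_{\con}$.

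The total-complex statement $(\MC^{\bP}_\bullet,\partial^{\tot}_{\del})\cong(\MC^{\bP^*}_\bullet,\partial^{\tot}_{\con})$ survives, since both pairings sum to the same total differential. The single-differential and associated-graded identifications must be corrected to
\[
(\MC^{\bP}_\bullet,\partial_{\del})\cong(\MC^{\bP^*}_\bullet,\partial_{\con})
\qquad\text{and}\qquad
(\MC^{\bP}_\bullet,\partial_{\clp})\cong(\MC^{\bP^*}_\bullet,\partial_{\lp}),
\]
so the lists in (1) and (2) correspond entry by entry. With this fix the rest of your argument goes through.

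\textbf{Your closing caveat is misplaced.} For a subcomplex $C'\subset C$ with the induced filtration $F_p\cap C'$, the map $(F_p\cap C')/(F_{p-1}\cap C')\to F_p/F_{p-1}$ is \emph{always} injective. Its kernel is $(F_{p-1}\cap C')/(F_{p-1}\cap C')=0$. The bigrading of $\MC^{\bP}$ is needed for a different reason. It gives the equality $F^{\rk}_p\cap\MC^{\bP}_n=\bigoplus_{r\le p}\MC^{\bP}_{n-r,r}$, and hence $\gr^{\rk}_p\MC^{\bP}_n\cong\MC^{\bP}_{n-p,p}$ (and likewise for the nullity filtration).
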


\begin{proof}
	This follows immediately from Proposition~\ref{lem:property-p-subcomplex} together with the definitions of the total complexes, filtrations, associated graded complexes, and direct sum decompositions.
\end{proof}

%%%%%%%%%%%%%%%%%%%%%%%%%%%%%%%%%%%%%%%%%%%%%%%%%%%
\section{Comparison with Earlier Matroid Complexes}\label{sec:comparison}
%%%%%%%%%%%%%%%%%%%%%%%%%%%%%%%%%%%%%%%%%%%%%%%%%%%

The constructions of Section~\ref{sec:matroid-complex} recover the two matroid complexes that appear most directly in the existing literature. The first is the matroid homology complex of Alekseyevskaya, Borovik, Gelfand, and White \cite{ABGW00}. The second is the regular matroid complex from Section~5.2 of \cite{BBCMMW24}. The point is that the former is obtained by keeping only one of the deletion or contraction differentials, while the latter is obtained by restricting the total deletion complex to simple regular matroids and then bounding the rank.

\subsection{The \cite{ABGW00} Complex} 

For the comparison with \cite{ABGW00}, if $E=\{e_1<\cdots<e_n\}$ is a total ordering on the ground set of a matroid $\sM$ write $\omega_E \coloneqq e_1\wedge \cdots \wedge e_n \in \bigwedge^n \ZZ\langle E\rangle$. The degree-$n$ group in \cite{ABGW00} is spanned by ordered matroids $(\sM,E,\le)$, modulo the rule that an isomorphism contributes the sign of the induced permutation of $E$. This is exactly our orientation relation, so for $n\geq 1$ the assignment
\[
(\sM,E,\le)\longmapsto (-1)^n[\sM,\omega_E]
\]
identifies their generators with ours. The only differences in conventions are the following: (i) in degree $0$: \cite{ABGW00} set their chain complex equal to $0$, whereas we retain the class $[\varnothing,1]$ and (ii) they work over $\ZZ$ while we work over $\QQ$; so one should tensor their complex by $\QQ$ for comparison. Moreover, $\iota_{e_i}(\omega_E)=(-1)^{i-1}\omega_{E\setminus\{e_i\}}$, so the factor $(-1)^n$ converts the signs $(-1)^i$ used in \cite{ABGW00} into ours. Thus their deletion and contraction differentials are precisely $\partial_{\del}$ and $\partial_{\con}$. Equivalently, after tensoring by $\QQ$ the complexes of \cite{ABGW00} are the positive-degree truncations of $(\MC_{\bullet},\partial_{\del})$ and $(\MC_{\bullet},\partial_{\con})$, or, in the language of Section~\ref{subsec:tot-filt}, the direct sums of the fixed-rank and fixed-nullity pieces.

\subsection{The \cite{BBCMMW24} Complex} 

Turning to the regular matroid complex defined in \cite{BBCMMW24}, fix an integer $g\geq 0$. If $\sM$ is a simple regular matroid of rank at most $g$, choose a totally unimodular representation $A=[v_e]_{e\in E(\sM)}$ where $v_{e}$ is a column vector. Set $q_e\coloneqq v_ev_e^t$. The associated matroidal cone is $\sigma(\sM)=\RR_{\geq 0}\langle q_e \mid e\in E(\sM)\rangle$ viewed as a cone in the space of all $g\times g$ symmetric real matrices. As $\sM$ is simple, the rays of $\sigma(\sM)$ are distinct, and because matroidal cones are simplicial, every codimension-one face is obtained by omitting exactly one ray. Hence the facets of $\sigma(\sM)$ are precisely the cones $\sigma(\sM\setminus e)$. If $E(\sM)=\{e_1<\cdots<e_n\}$, choose the reference orientation on $\sigma(\sM)$ given by the ordered ray basis $q_{e_1}\wedge \cdots \wedge q_{e_n}$. With this choice, the boundary in the simplicial cone complex is the alternating sum over the facets obtained by deleting one ray, so it agrees with the total deletion differential.

It remains to compare the alternating conditions. A cone automorphism of $\sigma(\sM)$ permutes its rays, hence sends each $q_e$ to some $q_{e'}$. Since $hq_eh^t$ is rank one, this implies $hv_e=\pm v_{e'}$, so every cone automorphism induces a matroid automorphism of $\sM$. Conversely, if $\phi\in \Aut(\sM)$, then permuting the columns of a totally unimodular representation by $\phi$ gives another totally unimodular representation of $\sM$; by the well-definedness of Construction~2.12 of \cite{BBCMMW24}, this permutation is realized by an element of $\GL_g(\ZZ)$ stabilizing $\sigma(\sM)$. Thus $\sigma(\sM)$ is alternating if and only if $[\sM,\eta]$ is nonzero in $\MC$. This proves the following.

\begin{proposition}
	Let $R^{(g)}_{\bullet}$ denote the regular matroid complex \cite{BBCMMW24}. There is an isomorphism of complexes:
	\[
	\begin{tikzcd}[row sep = .5em, column sep = 3.5em]
	R^{(g)} \arrow[r, "\sim"] & F_{g}^{\rk} \Tot_{\bullet} \left( \MC^{\reg,\simp}_{\bullet, \bullet}, \partial_{\del}, \partial_{\clp}\right)\\
	\left[\sigma(\sM)\right] \arrow[r,mapsto] & \left[\sM, \eta\right]
	\end{tikzcd}.
	\] 
\end{proposition}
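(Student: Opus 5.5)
The isomorphism is assembled from the comparison already made in the preceding paragraphs. The plan is to define the map on generators, check that it is a bijection on basis elements in each degree, and then check that it is a chain map.

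First, the chain groups. In degree $n$, $R^{(g)}_n$ is spanned by classes $[\sigma(\sM)]$ with $\sM$ simple regular of rank at most $g$ on $n$ elements, oriented by an ordered ray basis. Alternating cones are nonzero and non-alternating cones are killed. Since $\sM\mapsto\sigma(\sM)$ is well defined on isomorphism classes (Construction~2.12 of \cite{BBCMMW24}), one can choose the representation. Then set $[\sigma(\sM),q_{e_1}\wedge\cdots\wedge q_{e_n}]\mapsto[\sM,e_1\wedge\cdots\wedge e_n]$. The target $F^{\rk}_g\Tot_n(\MC^{\reg,\simp}_{\bullet,\bullet})$ is $\bigoplus_{r\le g}\MC^{\reg,\simp}_{n-r,r}$, which is spanned by exactly these $[\sM,\eta]$. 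Two things need checking.
\begin{enumerate}
\item The map respects the orientation relations. Reordering the rays changes the sign by the same permutation sign as reordering $E$. An isomorphism of cones induced by $\GL_g(\ZZ)$ corresponds to a matroid isomorphism, by the ray argument above, and it acts on the orientations compatibly.
\item It is a bijection on nonzero basis elements. A cone class vanishes exactly when some automorphism of $\sigma(\sM)$ reverses orientation. By the two-sided correspondence between cone automorphisms and $\Aut(\sM)$ established above, this happens exactly when $[\sM,\eta]=0$.
\end{enumerate}
Distinct isomorphism classes of simple regular matroids give $\GL_g(\ZZ)$-inequivalent cones, because the matroid is recovered from the cone. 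So the map is injective, and surjectivity is clear.

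Second, compatibility with the differentials. The boundary of $\sigma(\sM)$ with the ray-ordering orientation is $\sum_i(-1)^{i-1}[\sigma(\sM\setminus e_i),q_{e_1}\wedge\cdots\widehat{q_{e_i}}\cdots\wedge q_{e_n}]$, possibly restricted to faces that remain in the complex. On the matroid side, $\partial_{\del}^{\tot}=\partial_{\del}+\partial_{\clp}$ deletes every element, and $\iota_{e_i}(\omega_E)=(-1)^{i-1}\omega_{E\setminus e_i}$, so the signs match. Deletion preserves simplicity and regularity, and it does not increase rank, so the rank filtration piece $F^{\rk}_g$ is a subcomplex.

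The main obstacle is matching the terms precisely. The rank of $\sM\setminus e$ drops exactly when $e$ is a coloop. In that case the face $\sigma(\sM\setminus e)$ lies in a lower-rank stratum, and this is exactly the $\partial_{\clp}$ summand. One must confirm that $R^{(g)}$ includes these faces rather than discarding them. The faces that \cite{BBCMMW24} omit, if any, are the cones meeting the boundary; those must likewise be checked to coincide with zero classes or excluded summands. I would settle this by comparing directly with the definition in Section~5.2 of \cite{BBCMMW24}.
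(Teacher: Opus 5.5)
Your route is the paper's: you orient $\sigma(\sM)$ by the ordered ray basis $q_{e_1}\wedge\cdots\wedge q_{e_n}$ and identify facets with deletions, so the cone boundary becomes $\partial_{\del}+\partial_{\clp}$. You then match the alternating condition with the absence of orientation-reversing automorphisms, using $hv_e=\pm v_{e'}$ in one direction and the well-definedness of Construction~2.12 of \cite{BBCMMW24} in the other. Your remark that distinct matroid classes give $\GL_g(\ZZ)$-inequivalent cones is left implicit in the paper. It follows from the same ray argument.

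What you have not done is the step you call the main obstacle. As written (``possibly restricted to faces that remain in the complex''), the chain-map property is not established. Two facts close it.

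\emph{Facets are exactly the deletions.} Your displayed boundary formula presupposes that the facets of $\sigma(\sM)$ are obtained by omitting exactly one ray. Here the paper uses two inputs. Since $\sM$ is simple, the rays $q_e$ are pairwise distinct. Since matroidal cones are simplicial, the $q_e$ are linearly independent. Hence the facets of $\sigma(\sM)$ are exactly the cones $\sigma(\sM\setminus e)$ for every $e\in E(\sM)$, coloop or not.

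\emph{Nothing is discarded.} The differential on $R^{(g)}$ is the boundary in the simplicial cone complex, which is the alternating sum over all facets. Each $\sigma(\sM\setminus e)$ is the cone of a simple regular matroid of rank at most $g$. By your own description of the chain groups, every such cone is a generator of $R^{(g)}$, including lower-rank cones that do not meet the positive definite cone. Consistently, the coloop complex $C^{(g)}\subset R^{(g)}$ of \cite{BBCMMW24} consists of such lower-rank cones together with rank-$g$ cones having a coloop.

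So the facets from coloops give exactly the $\partial_{\clp}$ terms and the remaining facets give the $\partial_{\del}$ terms. This completes your comparison.
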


A consequence of the above proposition is that passing to the top associated graded piece for the rank filtration recovers the fixed-rank simple regular complex
\[
\left(\MC^{\reg}_{\bullet,g}\cap \MC^{\simp}_{\bullet,g},\partial_{\del}\right).
\]

\begin{remark}
The same description identifies the coloop complex $C^{(g)}_{\bullet}$ of \S5.2 of \cite{BBCMMW24} with the subcomplex spanned by simple regular matroids of rank $<g$, together with those of rank $g$ having a coloop.
\end{remark}

%%%%%%%%%%%%%%%%%%%%%%%%%%%%%%%%%%%%%%%%%%%%%%%%%%%
\section{Algebraic Structures on Matroid Complexes}\label{sec:algebraic-structure}
%%%%%%%%%%%%%%%%%%%%%%%%%%%%%%%%%%%%%%%%%%%%%%%%%%%

In this section we use direct sums, restrictions, and contractions to endow the matroid complexes with additional algebraic structure, proving the following. 

\begin{theorem}\label{thm:hopf-algebra}
    The direct sum product $\star$ and the coproduct $\Delta$ make $(\MC_{\bullet},\star,[\varnothing,1],\Delta,\epsilon)$ into a connected graded Hopf algebra such that:
    \begin{enumerate}
    	\item  $\partial_{\del}$, $\partial_{\clp}$, and $\partial^{\tot}_{\del}$ are right graded coderivations for $\Delta$, 
	\item $\partial_{\con}$, $\partial_{\lp}$, and $\partial_{\con}^{\tot}$ are left graded coderivations for $\Delta$, and 
	\item all of the above differentials are graded derivations for $\star$. 
    \end{enumerate}
     Moreover, the rank and nullity filtrations on $\MC_{\bullet}$ are Hopf algebra filtrations, and the corresponding associated graded objects are connected graded Hopf algebras.
\end{theorem}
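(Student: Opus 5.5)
We first fix the definitions so the signs are forced. For classes $[\sM,\eta]$ and $[\sM',\eta']$ set
\[
[\sM,\eta]\star[\sM',\eta'] \coloneqq [\sM\oplus\sM',\eta\wedge\eta'],
\]
where $\eta\wedge\eta'$ is taken in $\bigwedge^{|E\sqcup E'|}\ZZ\langle E\sqcup E'\rangle$. For the coproduct, choose a representative orientation $\eta$ and, for each $S\subset E(\sM)$, write $\eta=\pm\,\eta_S\wedge\eta_{E\setminus S}$ with $\eta_S,\eta_{E\setminus S}$ primitive. Then set
\[
\Delta[\sM,\eta]\coloneqq\sum_{S\subset E}[\sM|S,\eta_S]\otimes[\sM/S,\eta_{E\setminus S}],
\]
with the sign absorbed so that $\eta=\eta_S\wedge\eta_{E\setminus S}$ exactly. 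The splitting is only defined up to a simultaneous sign change of both factors, which is harmless in the tensor product.

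Well-definedness on $\MC$ is checked exactly as in Lemma~\ref{lem:deletion-desc}. For orientation reversal, $-\eta$ changes the sign of one factor. For an isomorphism $\psi$, it restricts to isomorphisms $\sM|S\cong\sM'|\psi(S)$ and $\sM/S\cong\sM'/\psi(S)$, and the pushforward commutes with wedge splitting, as in Lemma~\ref{lem:pushforward-del-con}. The grading is by ground set size, with the Koszul sign rule on $\MC\otimes\MC$. We then verify the axioms.

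\begin{enumerate}
\item \emph{Associativity and graded commutativity of $\star$.} These follow from $(\sM\oplus\sM')\oplus\sM''=\sM\oplus(\sM'\oplus\sM'')$ and $\eta'\wedge\eta=(-1)^{|E||E'|}\eta\wedge\eta'$. The unit is $[\varnothing,1]$.
\item \emph{Coassociativity.} This follows from the standard matroid identities: for $S\subset T$ we have $(\sM|T)|S=\sM|S$, $(\sM|T)/S=(\sM/S)|(T\setminus S)$, and $(\sM/S)/(T\setminus S)=\sM/T$. Together with $\eta=\eta_S\wedge\eta_{T\setminus S}\wedge\eta_{E\setminus T}$, both iterated coproducts give the same triple sum. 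The counit $\epsilon$ picks out $S=E$ on the left and $S=\varnothing$ on the right.
\item \emph{Compatibility $\Delta(a\star b)=\Delta(a)\star\Delta(b)$.} This uses Schmitt's identities $(\sM\oplus\sM')|(S\sqcup S')=\sM|S\oplus\sM'|S'$ and the analogous identity for contraction. Every subset of $E\sqcup E'$ is uniquely of the form $S\sqcup S'$.
\end{enumerate}

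The sign in the compatibility check comes from rewriting $\eta_S\wedge\eta_{E\setminus S}\wedge\eta'_{S'}\wedge\eta'_{E'\setminus S'}$ as $(\eta_S\wedge\eta'_{S'})\wedge(\eta_{E\setminus S}\wedge\eta'_{E'\setminus S'})$. This produces $(-1)^{|E\setminus S||S'|}$, which is exactly the Koszul sign in the product on $\MC\otimes\MC$. Since $\MC_0=\QQ[\varnothing,1]$, the bialgebra is connected graded. Hence an antipode exists by Takeuchi's recursive formula, and no explicit antipode is needed.

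For the derivation property, split the sum defining $\partial(\sM\oplus\sM',\eta\wedge\eta')$ according to whether $x\in E$ or $x\in E'$. Loops and coloops of a direct sum are exactly those of the summands, and $(\sM\oplus\sM')\setminus x=(\sM\setminus x)\oplus\sM'$ for $x\in E$. Moreover $\iota_x(\eta\wedge\eta')$ equals $\iota_x\eta\wedge\eta'$ for $x\in E$ and $(-1)^{|E|}\eta\wedge\iota_x\eta'$ for $x\in E'$. The same argument works verbatim for contraction, and the total differentials are sums of derivations.

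For the coderivation statements, fix $x$ in $\partial_{\del}$ and compare $\Delta(\sM\setminus x)$ with the terms of $\Delta(\sM)$. For $S\not\ni x$ we have $\sM\setminus x|S=\sM|S$ and $(\sM\setminus x)/S=(\sM/S)\setminus x$. However, $x$ may be a coloop of $\sM/S$ without being one in $\sM$. This is why one only gets a one-sided identity. The precise shape we aim to prove is
\[
\Delta\circ\partial_{\del}^{\tot}=(\id\otimes\partial^{\tot}_{\del})\circ\Delta+\text{(terms of the form }(\partial\otimes\id)\Delta),
\]
with "right coderivation'' taken to mean the identity holding with the correct accounting of terms with $x\in S$. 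The terms with $x\in S$ are matched with deletions in $\sM|S$. Making these two term matchings agree, including loop and coloop status across restriction and contraction, is the main obstacle. I would first settle the exact definition of right coderivation that the identities support, by computing small examples such as $U_{1,2}$. The contraction statements then follow by applying duality (Proposition~\ref{prop:duality}). Duality sends $\Delta$ to the flipped coproduct, since $(\sM|S)^*=\sM^*/(E\setminus S)$, and so exchanges left and right.

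Finally, rank and nullity are additive under $\oplus$. They are also additive under the splitting: $\rk(\sM|S)+\rk(\sM/S)=\rk\sM$, and likewise for nullity. So $\MC$ is actually bigraded as a Hopf algebra, and the filtrations $F^{\rk}$ and $F^{\nul}$ are Hopf filtrations. Their associated graded objects are again connected graded Hopf algebras, isomorphic to $\MC$ itself.
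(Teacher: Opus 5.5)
Your verification of the bialgebra axioms, the antipode, the Leibniz rule (3), and the filtration claims is essentially the paper's argument. The genuine gap is in parts (1) and (2): you never prove the coderivation identities, and the obstacle you describe does not occur.

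First, for $x\notin S$, contracting $S$ does not change whether $x$ is a coloop. Since $\rk_{\sM/S}(A)=\rk_{\sM}(A\cup S)-\rk_{\sM}(S)$, the element $x$ is a coloop of $\sM/S$ if and only if $\rk_{\sM}(E\setminus\{x\})=\rk_{\sM}(E)-1$, that is, if and only if it is a coloop of $\sM$. New coloops can appear under \emph{restriction}, e.g.\ in $U_{1,2}|\{x\}$, but deletion never acts on the restriction factor here.

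Second, there are no terms with $x\in S$ to account for, because $\Delta([\sM\setminus x,\eta\setminus x])$ only sums over $S\subseteq E(\sM)\setminus\{x\}$. So $\Delta(\partial_{\del}[\sM,\eta])$ is indexed by pairs $(x,S)$ with $x$ a non-coloop of $\sM$ and $x\notin S$. Likewise $(\id\otimes\partial_{\del})\Delta([\sM,\eta])$ is indexed by pairs $(S,x)$ with $x\in E(\sM)\setminus S$ a non-coloop of $\sM/S$. By the first point these index sets coincide.

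Now use $(\sM\setminus x)|S=\sM|S$, $(\sM\setminus x)/S=(\sM/S)\setminus x$, and $\iota_x(\eta|S\wedge\eta/S)=(-1)^{|S|}\,\eta|S\wedge\iota_x(\eta/S)$, which holds since $\iota_x(\eta|S)=0$. The terms then match with exactly the Koszul sign $(-1)^{|S|}$ prescribed by $\id\otimes\partial_{\del}$. Hence $\Delta\circ\partial_{\del}=(\id\otimes\partial_{\del})\circ\Delta$ holds on the nose, with no correction terms of the form $(\partial\otimes\id)\Delta$. The same argument with ``coloop'' handles $\partial_{\clp}$, and therefore $\partial^{\tot}_{\del}$.

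The one-sidedness comes from the \emph{absence} of any deletion acting on the restriction factor, not from a change in coloop status. As written, your plan replaces the claimed identity by a different one with unspecified extra terms and defers the meaning of ``right coderivation.'' So (1) is not established.

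Your reduction of (2) to (1) via duality is a valid alternative to the paper's direct check. The paper instead uses $(\sM/x)|T=(\sM|(T\cup\{x\}))/x$, $(\sM/x)/T=\sM/(T\cup\{x\})$, and the fact that loop status is unchanged by restriction. For the duality route, one checks that $\Delta([\sM^*,\eta])=(\ast\otimes\ast)\circ\tau\circ\Delta([\sM,\eta])$, with $\tau$ the Koszul-signed flip, and that $\tau\circ(\id\otimes D)=(D\otimes\id)\circ\tau$. Thus right coderivations dualize to left ones. This only yields (2) once (1) itself has been proved as above.
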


Before giving the formal definitions, we record the picture to keep in mind. There are two basic ways to build algebraic structure from matroids: direct sums and restriction-contraction. The direct sum takes two matroids and produces a larger one, so it naturally gives a product $\star:\MC\otimes \MC\to \MC$. For instance, $[U_{1,1},1]\star [U_{1,2},2\wedge 3] = [U_{1,1}\oplus U_{1,2},1\wedge 2\wedge 3]$. By contrast, restriction and contraction take a single matroid \(\sM\) together with a subset \(S\subseteq E(\sM)\) and produce an ordered pair $(\sM|S,\sM/S)$ and so summing over all subsets gives a coproduct $\Delta:\MC\to \MC\otimes \MC$.

The asymmetry between these two constructions should be kept in mind from the outset. The direct sum preserves rank and nullity separately, so $\star$ is naturally bihomogeneous. The coproduct $\Delta$ only preserves rank and nullity additively across the two tensor factors. Thus, while $\Delta$ is compatible with the bigrading, the most natural connected grading for the Hopf algebra structure is the grading by ground set size.

Finally, the differentials interact with these constructions in two distinct ways. For the product $\star$, a single element of $\sM\oplus \sQ$ belongs to one summand or the other, so each differential satisfies the Leibniz rule. For the coproduct $\Delta$, the ordered pair $(\sM|S,\sM/S)$ remembers which elements lie in $S$ and which lie in its complement. This forces an asymmetry: deletion-type differentials act on the contraction factor, while contraction-type differentials act on the restriction factor. This means the differentials are not two-sided coderivations in general. Instead, deletion-type differentials are right coderivations and contraction-type differentials are left coderivations.

As a consequence of Theorem~\ref{thm:hopf-algebra} we will see that essentially any matroid complex that contains either $U_{0,1}$ or $U_{1,1}$ and respects the product structure $\star$ will be acyclic. 

We now make this precise in stages. We first treat the direct sum product and show that each of our four differentials satisfies a Leibniz rule with respect to $\star$. We then construct the restriction-contraction coproduct and prove the corresponding one-sided coderivation identities. Once these ingredients are in place, Theorem~\ref{thm:hopf-algebra} is largely formal, and the rest of the section extracts its two main consequences: broad acyclicity results and a reduction of the algebra structure to connected matroids.

%%%%%%%%%%%%%%%%%%%%%%%%%%%%%%%%%%%%%%%%%%%%%%%%%%%
\subsection{Direct Sum Product}
%%%%%%%%%%%%%%%%%%%%%%%%%%%%%%%%%%%%%%%%%%%%%%%%%%%

We begin by defining a product on $\MC$ using direct sums of matroids. If $\sM$ and $\sQ$ are matroids, the ground set of $\sM\oplus \sQ$ is $E(\sM)\sqcup E(\sQ)$; so there is a canonical isomorphism 
\begin{equation}\label{eqn:orientation-wedge}
\begin{tikzcd}[column sep =3.25em, row sep = .5em]
    \displaystyle \bigwedge^{|E(\sM)|}\ZZ\left\langle E(\sM)\right\rangle \otimes \bigwedge^{|E(\sQ)|}\ZZ\left\langle E(\sQ)\right\rangle \arrow[r, "\sim"] & \bigwedge^{|E(\sM\oplus\sQ)|}\ZZ\left\langle E(\sM\oplus \sQ)\right\rangle.
\end{tikzcd}
\end{equation}
Thus, if $\eta$ and $\omega$ are orientations on $\sM$ and $\sQ$ respectively then $\eta\wedge \omega$ is an orientation on $\sM\oplus\sQ$. Using this, we define a bilinear map  $\widetilde{\star}$ on $\widetilde{\cM}$ on generators by
    \[
    \begin{tikzcd}[column sep =3.25em, row sep = .5em]
        \widetilde{\cM} \otimes\widetilde{\cM} \arrow[r,"\widetilde{\star}"] & \widetilde{\cM}\\ 
        \left(\sM,\eta\right)\otimes\left(\sQ,\omega\right) \arrow[r,mapsto] & \left(\sM\oplus\sQ,\eta\wedge \omega\right).
    \end{tikzcd}
    \]
and then extending linearly. We call this the \emph{direct sum product}. As expected, this descends to a well-defined product on $\MC$, which we will also call the direct sum product. 

\begin{lemma}\label{lem:direct-sum-product}
    The map $\widetilde{\star}$ descends to a bilinear map
    \[
    \begin{tikzcd}[column sep = 3.25em, row sep = .5em]
        \MC\otimes \MC \arrow[r,"\star"] & \MC,
    \end{tikzcd}
    \]
    and the induced product on $\MC$ is associative and unital with $[\varnothing,1]$ as its two-sided unit.
\end{lemma}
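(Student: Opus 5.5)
The proof follows the pattern of Lemma~\ref{lem:deletion-desc}: check that $\widetilde{\star}$ carries relations into relations, and then verify associativity and unitality on generators. Since $\widetilde{\star}$ is bilinear, it descends to $\MC\otimes\MC=(\widetilde{\cM}/\widetilde{\cR})\otimes(\widetilde{\cM}/\widetilde{\cR})$ exactly when
\[
\widetilde{\cR}\,\widetilde{\star}\,\widetilde{\cM}\subset\widetilde{\cR}
\qquad\text{and}\qquad
\widetilde{\cM}\,\widetilde{\star}\,\widetilde{\cR}\subset\widetilde{\cR}.
\]
It suffices to check this on the two types of generators of $\widetilde{\cR}$, paired with a basis element $(\sQ,\omega)$ on either side.

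For the sign relation, bilinearity of $\wedge$ gives
\[
(\sM,\eta)\,\widetilde{\star}\,(\sQ,\omega)+(\sM,-\eta)\,\widetilde{\star}\,(\sQ,\omega)=(\sM\oplus\sQ,\eta\wedge\omega)+(\sM\oplus\sQ,-(\eta\wedge\omega)),
\]
which is a generator of $\widetilde{\cR}$. The case with the sign relation on the right is the same.

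For the isomorphism relation, let $\psi:\sM\to\sM'$ be an isomorphism. Then $\psi\sqcup\id_{E(\sQ)}$ is a bijection $E(\sM)\sqcup E(\sQ)\to E(\sM')\sqcup E(\sQ)$, and it is a matroid isomorphism $\sM\oplus\sQ\to\sM'\oplus\sQ$ because bases of a direct sum are exactly the disjoint unions $B\sqcup B'$. The key point is that pushforward is compatible with the canonical isomorphism \eqref{eqn:orientation-wedge}:
\[
(\psi\sqcup\id)_*(\eta\wedge\omega)=\psi_*(\eta)\wedge\omega.
\]
This holds because the induced map on exterior algebras is an algebra map; one can also check it directly on decomposable generators. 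Consequently
\[
(\sM,\eta)\,\widetilde{\star}\,(\sQ,\omega)-(\sM',\psi_*\eta)\,\widetilde{\star}\,(\sQ,\omega)
\]
is itself an isomorphism generator of $\widetilde{\cR}$. The right-hand factor is handled symmetrically with $\id\sqcup\psi$.

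One bookkeeping point needs care. The direct sum uses a disjoint union of ground sets, so if $E(\sM)$ and $E(\sQ)$ overlap we must first relabel them to be disjoint. The isomorphism relation ensures the resulting class is independent of that choice, by the same compatibility of pushforward with $\wedge$.

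Associativity holds already on generators, up to relations. The canonical bijection $(E_1\sqcup E_2)\sqcup E_3\cong E_1\sqcup(E_2\sqcup E_3)$ is an isomorphism
\[
(\sM_1\oplus\sM_2)\oplus\sM_3\;\cong\;\sM_1\oplus(\sM_2\oplus\sM_3),
\]
since both sides have bases $B_1\sqcup B_2\sqcup B_3$. This bijection pushes $(\eta_1\wedge\eta_2)\wedge\eta_3$ to $\eta_1\wedge(\eta_2\wedge\eta_3)$ by associativity of the wedge product, so the two classes agree in $\MC$.

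For the unit, $\sM\oplus\varnothing$ and $\varnothing\oplus\sM$ are canonically identified with $\sM$, and $\eta\wedge 1=1\wedge\eta=\eta$ under the identification $\bigwedge^0\ZZ\langle\varnothing\rangle=\ZZ$. Hence $[\varnothing,1]$ is a two-sided unit.

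The only real obstacle is the compatibility of pushforward with wedge under disjoint union, together with the disjointness bookkeeping just described. Everything else follows directly from bilinearity and the definitions.
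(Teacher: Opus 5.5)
Your proposal is correct and follows essentially the same route as the paper's proof: check the orientation-reversal and isomorphism generators of $\widetilde{\cR}$ on either side (using $(\psi\oplus\id)_*(\eta\wedge\omega)=\psi_*(\eta)\wedge\omega$), then deduce associativity and unitality from the canonical identifications of iterated direct sums together with associativity of $\wedge$. Where the paper simply writes equality of $(\sM\oplus\sQ)\oplus\sP$ and $\sM\oplus(\sQ\oplus\sP)$, you treat the canonical bijections of ground sets as isomorphisms and push orientations along them; this is a harmless extra bit of care.
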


\begin{proof}
    To show that $\widetilde{\star}$ descends to a well-defined map on $\MC$ it is enough to show that $\widetilde{\star}(\widetilde{\cR} \otimes \widetilde{\cM} + \widetilde{\cM} \otimes \widetilde{\cR}) \subset \widetilde{\cR}$. We do this directly on generators of $\widetilde{\cR}$. Beginning with the orientation-reversing relation and using linearity of $\widetilde{\star}$ we have
    \begin{align*}
    \widetilde{\star}\left(\left( \left(\sM, \eta\right) + \left(\sM, -\eta\right)\right)\otimes \left(\sQ,\omega\right)\right) &=  \widetilde{\star}\left( \left(\sM, \eta\right) \otimes \left(\sQ, \omega\right)\right) + \widetilde{\star}\left( \left(\sM, -\eta\right) \otimes \left(\sQ, \omega\right)\right) \\
    &=\left(\sM \oplus \sQ, \eta\wedge\omega\right) + \left(\sM\oplus \sQ, (-\eta)\wedge \omega\right) = \left(\sM \oplus \sQ, \eta\wedge\omega\right) + \left(\sM\oplus \sQ, -(\eta\wedge \omega)\right)
    \end{align*}
    which is clearly in $\widetilde{\cR}$. Note the last equality above follows from the linearity of \eqref{eqn:orientation-wedge}. The isomorphism relation is checked analogously, using that if $\psi: \sM \to \sM'$ is an isomorphism of matroids then $(\psi \oplus \id):\sM\oplus \sQ \to \sM'\oplus \sQ$ is an isomorphism and $(\psi\oplus\id)_{*}(\eta\wedge\omega) = (\psi_*(\eta))\wedge \omega$.
    
    Associativity follows immediately from the definition and the facts that $(\sM \oplus \sQ)\oplus \sP$ is equal to $\sM\oplus (\sQ \oplus \sP)$ and $(\eta \wedge \omega) \wedge \zeta = \eta \wedge (\omega \wedge \zeta)$. Lastly one checks that $[\varnothing,1]$ is a two-sided unit via a direct computation using that $\sM\oplus\varnothing\cong \sM$ and $\wedge^{|E(\sM)|}\ZZ\langle E(\sM)\rangle \wedge 1 \cong \wedge^{|E(\sM)|}\ZZ\langle E(\sM)\rangle$.
\end{proof}

The direct sum product on $\MC$ is homogeneous for all of the gradings of interest. Moreover, while it is not commutative, it is graded-commutative with respect to the total grading on $\MC_{\bullet}$ given by ground set size. 

\begin{lemma}\label{lem:direct-sum-gradings}
\begin{enumerate}
	\item The morphism $\star$ is homogeneous of degree $(0,0)$ on $\MC_{\bullet,\bullet}$.
	\item The morphism $\star$ is homogeneous of degree $0$ on $\MC_{\bullet}$.
	\item  The direct sum product is graded-commutative on $\MC_{\bullet}$:
	\[
	\left[\sM,\eta\right]\star\left[\sQ,\omega\right] = (-1)^{|E(\sM)|\,|E(\sQ)|}\left[\sQ,\omega\right]\star \left[\sM,\eta\right].
	\] 
\end{enumerate}
\end{lemma}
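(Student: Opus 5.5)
The plan is to check each of the three claims on representatives $(\sM,\eta)\otimes(\sQ,\omega)$ in $\widetilde{\cM}\otimes\widetilde{\cM}$. Lemma~\ref{lem:direct-sum-product} already shows that $\star$ is well defined on $\MC$ and bilinear, so verifying an identity on generators is enough for it to hold on $\MC$.

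For (1), recall two standard facts about direct sums. Every basis of $\sM\oplus\sQ$ has the form $B\sqcup B'$, so $\rk(\sM\oplus\sQ)=\rk(\sM)+\rk(\sQ)$. Also $|E(\sM\oplus\sQ)|=|E(\sM)|+|E(\sQ)|$. Subtracting the first from the second gives $\nul(\sM\oplus\sQ)=\nul(\sM)+\nul(\sQ)$. Hence $\star$ sends $\MC_{k,r}\otimes\MC_{k',r'}$ into $\MC_{k+k',r+r'}$, which is homogeneity of degree $(0,0)$ for the bigrading. Part (2) follows by summing the bidegrees, since the ground-set-size grading is $k+r$.

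For (3), the main tool is the swap isomorphism. Let $\tau:E(\sM)\sqcup E(\sQ)\to E(\sQ)\sqcup E(\sM)$ be the identity on underlying elements. Since the basis sets $B\sqcup B'$ and $B'\sqcup B$ coincide, $\tau$ is a matroid isomorphism $\sM\oplus\sQ\to\sQ\oplus\sM$. The isomorphism relation in $\widetilde{\cR}$ then gives $[\sM\oplus\sQ,\eta\wedge\omega]=[\sQ\oplus\sM,\tau_*(\eta\wedge\omega)]$. Since $\tau$ is the identity on elements, $\tau_*(\eta\wedge\omega)=\eta\wedge\omega$ viewed in the exterior algebra of the same set. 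In an exterior algebra, for homogeneous elements of degrees $|E(\sM)|$ and $|E(\sQ)|$,
\[
\eta\wedge\omega=(-1)^{|E(\sM)|\,|E(\sQ)|}\,\omega\wedge\eta .
\]
The orientation-reversal relation then converts this sign into the stated sign on classes, giving $[\sM,\eta]\star[\sQ,\omega]=(-1)^{|E(\sM)||E(\sQ)|}[\sQ,\omega]\star[\sM,\eta]$.

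The only point requiring care is bookkeeping about the canonical isomorphism \eqref{eqn:orientation-wedge}. We must check that identifying $\bigwedge^{top}\ZZ\langle E\sqcup E'\rangle$ with $\bigwedge^{top}\ZZ\langle E'\sqcup E\rangle$ via $\tau$ is the identity on the wedge of the underlying elements, so that the only sign introduced is the Koszul sign from reordering the wedge factors. This is checked directly on basis monomials $e_1\wedge\cdots\wedge e_n\wedge f_1\wedge\cdots\wedge f_m$. Moving each of the $m$ letters $f_j$ past all $n$ letters $e_i$ costs $(-1)^{nm}$, which is exactly the sign in the statement.
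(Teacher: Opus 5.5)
Your proposal is correct and follows essentially the same argument as the paper: additivity of rank and nullity under direct sum gives (1) and hence (2), and for (3) the identification $\sM\oplus\sQ=\sQ\oplus\sM$ reduces the claim to the exterior-algebra sign $\eta\wedge\omega=(-1)^{|E(\sM)|\,|E(\sQ)|}\,\omega\wedge\eta$. Your explicit use of the swap isomorphism $\tau$ and the orientation-reversal relation just spells out steps the paper's proof leaves implicit.
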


\begin{proof}
	Part (2) follows from (1) by formal properties of graded maps. Condition (1) is equivalent to showing that for all $k,r,k',r'\in \ZZ$ one has
\[
    \begin{tikzcd}[row sep = .5em, column sep = 3.25em]
    \MC_{k,r}\otimes \MC_{k',r'} \arrow[r,"\star"] & \MC_{k+k',r+r'}
     \end{tikzcd}
    \]
     This follows from the fact that  $\rk(\sM\oplus \sQ)=\rk(\sM)+\rk(\sQ)$ and $\nul(\sM\oplus \sQ)=\nul(\sM)+\nul(\sQ)$.
     
     Turning to part (3) it suffices to check the claim on generators.  Using the definition of $\star$ and that matroid direct sum is symmetric the left-hand side can be written as
\[
\left[\sM,\eta\right]\star\left[\sQ,\omega\right] = \left[\sM\oplus\sQ,\eta\wedge\omega\right] =\left[\sQ\oplus\sM,\eta\wedge\omega\right] 
\]
       It is thus enough to prove that $\eta \wedge \omega = (-1)^{|E(\sM)|\,|E(\sQ)|} \omega \wedge \eta$, which follows from the usual sign rule in the exterior algebra. 
       \end{proof}

The four differentials defined in the previous section all satisfy the Leibniz rule with respect to $\star$. The uniform behavior here 
contrasts with the asymmetric behavior we will see in \S\ref{subsec:coproduct} for the coproduct. This is because deletion and contraction of a single element both commute with taking a direct sum with a disjoint matroid: for $s \in E(\sM)$ one has  $(\sM \oplus \sQ)\setminus s = (\sM\setminus s)\oplus \sQ$ and  $(\sM \oplus \sQ)/s = (\sM/s) \oplus \sQ$, and analogously for elements of $E(\sQ)$.

\begin{lemma}\label{lem:direct-sum-derivation}
    Let $\partial$ denote any of $\partial_{\del}$, $\partial_{\clp}$, $\partial_{\con}$, or $\partial_{\lp}$. If $a,b\in \MC_{\bullet}$ are homogeneous, then
    \[
    \partial(a\star b)=\partial(a)\star b+(-1)^{|a|}a\star \partial(b).
    \]
\end{lemma}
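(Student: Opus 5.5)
We argue on representatives in $\widetilde{\cM}$, since $\partial$ and $\star$ are induced from $\tilde\partial$ and $\widetilde\star$ (Lemmas~\ref{lem:deletion-desc}, \ref{lem:vertical-diffs}, \ref{lem:direct-sum-product}). By bilinearity it suffices to take $a=[\sM,\eta]$ and $b=[\sQ,\omega]$ with $|a|=|E(\sM)|$. The ground set of $\sM\oplus\sQ$ is $E(\sM)\sqcup E(\sQ)$, so we split the sum defining $\tilde\partial((\sM\oplus\sQ,\eta\wedge\omega))$ into the terms indexed by $x\in E(\sM)$ and those indexed by $x\in E(\sQ)$. We will show that the first group gives $\tilde\partial(\sM,\eta)\,\widetilde\star\,(\sQ,\omega)$ and the second gives $(-1)^{|E(\sM)|}(\sM,\eta)\,\widetilde\star\,\tilde\partial(\sQ,\omega)$.

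The first input is purely matroidal: the type of an element is unchanged by the direct sum. Every basis of $\sM\oplus\sQ$ has the form $B\sqcup B'$ with $B\in\cB(\sM)$ and $B'\in\cB(\sQ)$. Hence $x\in E(\sM)$ is a loop (resp.\ coloop) of $\sM\oplus\sQ$ if and only if it is a loop (resp.\ coloop) of $\sM$, and similarly for $x\in E(\sQ)$. So for each of the four differentials, the summation condition on $x$ can be checked in the summand containing $x$. Next we check directly from bases the identities
\[
(\sM\oplus\sQ)\setminus x=(\sM\setminus x)\oplus\sQ,\qquad (\sM\oplus\sQ)/x=(\sM/x)\oplus\sQ
\]
for $x\in E(\sM)$, and the analogous ones for $x\in E(\sQ)$. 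For contraction, the bases of $\sM/x$ with $x$ not a loop are $\{B\setminus x: x\in B\in\cB\}$, and with $x$ a loop they are $\cB(\sM)$; both descriptions are compatible with $\sqcup$.

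The remaining step, and the only one involving signs, is the orientation. We need
\[
\iota_x(\eta\wedge\omega)=\iota_x(\eta)\wedge\omega \quad (x\in E(\sM)),\qquad \iota_x(\eta\wedge\omega)=(-1)^{|E(\sM)|}\eta\wedge\iota_x(\omega)\quad (x\in E(\sQ)).
\]
Both follow from the fact that the interior product is a graded derivation of degree $-1$ on the exterior algebra, since $\iota_x$ annihilates every generator other than $x$. To avoid convention issues, we can instead verify them on the basis $\eta=e_1\wedge\cdots\wedge e_m$, $\omega=f_1\wedge\cdots\wedge f_p$: moving $x=f_j$ to the front costs $(-1)^{m+j-1}$, versus $(-1)^{j-1}$ inside $\omega$. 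This is consistent with the convention $\iota_x(x\wedge\alpha)=\alpha$ used in Definition~\ref{def:orientation}.

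Summing the two groups of terms and passing to $\MC$ gives the Leibniz formula. No single step is a real obstacle; the sign bookkeeping in the second interior-product identity is the main point requiring care.
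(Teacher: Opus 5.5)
Your proposal is correct and follows essentially the same route as the paper. Both arguments split the sum over $E(\sM)\sqcup E(\sQ)$, use that deleting or contracting a single element commutes with direct sum, and use the graded-derivation property of $\iota_x$ to get $\iota_x(\eta\wedge\omega)=\iota_x(\eta)\wedge\omega$ and $\iota_x(\eta\wedge\omega)=(-1)^{|E(\sM)|}\eta\wedge\iota_x(\omega)$. Your explicit check that loop and coloop status is unchanged under direct sum, and your basis-level sign verification, spell out points the paper leaves implicit.
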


\begin{proof}
    Let us begin with $\partial_{\del}$. It is enough to check the claim on generators, and we proceed by computing the left-hand side directly
    \begin{align*}
        \partial_{\del}\left( \left[\sM,\eta\right]\star\left[\sQ,\omega\right]\right)&=\partial_{\del}\left(\left[\sM\oplus\sQ,\eta\wedge\omega\right]\right)=\sum_{\substack{x\in E(\sM\oplus\sQ)\\ \text{$x$ not a coloop}}}\left[(\sM\oplus\sQ)\setminus x, (\eta\wedge\omega)\setminus x\right]\\
        &=\sum_{\substack{s \in E(\sM)\\ \text{not a coloop}}} \left[(\sM\oplus \sQ)\setminus s, (\eta\wedge\omega)\setminus s\right]+
        \sum_{\substack{t \in E(\sQ)\\ \text{not a coloop}}} \left[(\sM\oplus\sQ)\setminus t, (\eta\wedge\omega)\setminus t\right]
    \end{align*}
    Deletion of a single element and direct sum commute, meaning $(\sM\oplus \sQ)\setminus s =(\sM\setminus s)\oplus\sQ$ and $(\sM\oplus \sQ)\setminus t =\sM\oplus(\sQ\setminus t)$. We must thus understand how deletion behaves with respect to wedge product of orientations. Recall from Definition~\ref{def:orientation} that we defined the deletion $(\eta\wedge\omega)\setminus s$ to be the interior product $\iota_{s}(\eta\wedge\omega)$ where $\iota_s$ is the interior product given by $\iota_s(s\wedge\alpha)=\alpha$. Abusing notation slightly, we may actually extend $\iota_s$ to be a degree $-1$ graded derivation on the exterior algebra:
    \[
    \begin{tikzcd}[column sep = 3em]
    \iota_{s}:\displaystyle \bigwedge^{\bullet}\ZZ\left\langle E(\sM)\sqcup E(\sQ) \right\rangle \arrow[r]& \displaystyle\bigwedge^{\bullet}\ZZ\left\langle E(\sM)\sqcup E(\sQ) \right\rangle
    \end{tikzcd}
    \]
    satisfies the usual relation $\iota_{s}(\alpha\wedge \beta)=\iota_{s}(\alpha)\wedge\beta+(-1)^{|\alpha|}\alpha\wedge\iota_{s}(\beta)$. Applying this relation, and using that $\iota_{s}(\omega)=0$ and $s\in E(\sM)$ and $\omega$ is disjoint from the span of $E(\sM)$, we see that $\iota_s(\eta\wedge \omega)=\iota_s(\eta)\wedge \omega$. This means that $(\eta\wedge \omega)\setminus s = (\eta\setminus s)\wedge \omega$. The analogous argument shows that $(\eta\wedge \omega)\setminus t=(-1)^{|\eta|}\eta \wedge(\omega \setminus t)$. Combining all of this gives
    \begin{align*}
        \partial_{\del}\left( \left[\sM,\eta\right]\star\left[\sQ,\omega\right]\right)&=\sum_{\substack{s \in E(\sM)\\ \text{not a coloop}}} \left[(\sM\setminus s)\oplus \sQ, (\eta\setminus s)\wedge\omega\right]+
        \sum_{\substack{t \in E(\sQ)\\ \text{not a coloop}}} (-1)^{|E(\sM)|}\left[\sM\oplus(\sQ\setminus t), \eta\wedge(\omega\setminus t)\right]\\
        &=\sum_{\substack{s \in E(\sM)\\ \text{not a coloop}}} \left[\sM\setminus s, \eta\setminus s\right]\star\left[\sQ,\omega\right]+
        \sum_{\substack{t \in E(\sQ)\\ \text{not a coloop}}} (-1)^{|E(\sM)|}\left[\sM,\eta\right]\star\left[\sQ\setminus t, \omega\setminus t\right]\\
    &=\partial_{\del}\left(\left[\sM, \eta\right]\right) \star \left[\sQ,\omega \right]+(-1)^{|E(\sM)|}\left[\sM,\eta\right]\star\partial_{\del}\left(\left[\sQ,\omega\right]\right)
    \end{align*}
	This proves the formula for $\partial_{\del}$. The other cases are largely analogous. The only differences are replacing ``not a coloop'' by ``a coloop'', ``not a loop'', and ``a loop'' as appropriate and using that if $s \in E(\sM)$ and $t \in E(\sQ)$ then $(\sM\oplus\sQ)/s = (\sM/s)\oplus \sQ$ and $(\sM\oplus\sQ)/t = \sM\oplus (\sQ/t)$. 
\end{proof}

%%%%%%%%%%%%%%%%%%%%%%%%%%%%%%%%%%%%%%%%%%%%%%%%%%%
\subsection{The Restriction-Contraction Coproduct}\label{subsec:coproduct}
%%%%%%%%%%%%%%%%%%%%%%%%%%%%%%%%%%%%%%%%%%%%%%%%%%%

We now turn to defining a coproduct on $\MC$ using the restriction and contraction operations on matroids. Unlike the direct sum product, this coproduct cannot be reasonably defined on $\widetilde{\cM}$ itself. The issue is that in order to define the coproduct we must understand how an orientation $\eta$ on $\sM$ induces orientations on the restriction $\sM|S$ and contraction $\sM/S$ for all subsets $S \subset E(\sM)$. There is no canonical way to make such a choice. However, the choice of an induced orientation on both $\sM|S$ and $\sM/S$ is canonical up to sign. This sign ambiguity disappears in $\MC\otimes\MC$, since reversing both induced orientations multiplies both factors by $-1$, and so the signs cancel. We now give the definition of the coproduct before unpacking the necessary operations to make sense of it or check it is well-defined. 

\begin{definition}\label{def:coproduct}
    The \emph{$\Delta$ coproduct} on $\MC$ is the linear map
    \[
    \begin{tikzcd}[column sep = 3.25em, row sep = .75em]
        \MC \arrow[r,"\Delta"] & \MC \otimes \MC\\
        {[\sM,\eta]} \arrow[r,mapsto] & \displaystyle \sum_{S\subseteq E(\sM)} [\sM|S,\eta|S]\otimes [\sM/S,\eta/S].
    \end{tikzcd}
    \]
\end{definition}

\begin{remark}
The coproduct $\Delta$ is the oriented analogue of the restriction-contraction coproduct appearing in Schmitt's Hopf algebra of matroids \cite{schmitt94}. 
In Schmitt's construction one works with the free $\QQ$-vector space on isomorphism classes of matroids (with no orientation data), and the 
coproduct $\Delta([\sM]) = \sum_{S \subseteq E(\sM)} [\sM|S]\otimes[\sM/S]$ is immediately well-defined. The novelty here is that our orientation data introduces signs into the formula, which is precisely what makes $\Delta$ compatible with the chain complex differentials.
\end{remark}

Making sense of this definition requires us to understand how the choice of an orientation $\eta$ on a matroid $\sM$ induces orientations $\eta|S$ and $\eta/S$ on $\sM|S$ and $\sM/S$ for arbitrary subsets $S\subset E(\sM)$. Recall that $\sM|S$, the restriction of $\sM$ to $S$, is equal to the deletion $\sM\setminus(E(\sM) \setminus S)$. The use of restriction rather than deletion is simply for ease of notation. 

Fix a matroid $\sM=(E,\cB)$ and an orientation $\eta \in \left(\bigwedge^{|E|}\ZZ\langle E\rangle\right)^{\times}.$ For any subset $S \subset E$ we have a direct sum decomposition $\ZZ\langle E\rangle \cong \ZZ\langle S \rangle \oplus \ZZ\langle E\setminus S\rangle$, which gives canonical isomorphisms of rank-one free modules:
\[
\begin{tikzcd}[row sep =.75em, column sep = 3.5em]
    \displaystyle \bigwedge^{|S|}\ZZ\langle S\rangle  \otimes \bigwedge^{|E\setminus S|}\ZZ\langle E\setminus S\rangle \arrow[r,"\mu_{S}"] & \bigwedge^{|E|}\ZZ\langle E\rangle.
\end{tikzcd}
\]
In particular, $\mu_{S}^{-1}(\eta)$ is a simple tensor of the form $(\eta|S)\otimes(\eta/S)$ for some $\eta|S \in \left(\bigwedge^{|S|}\ZZ\langle S\rangle\right)^{\times}$ and $\eta/S \in \left(\bigwedge^{|E\setminus S|}\ZZ\langle E\setminus S\rangle\right)^{\times}$. Note that $\eta|S$ and $\eta/S$ are not necessarily unique if $|S|>1$, however, they are unique up to the choice of simultaneous sign. We choose, once and for all, some representative pair
$(\eta|S,\eta/S)$ with $\mu_{S}(\eta|S\otimes\eta/S)=\eta$ and call these the
\emph{induced restriction orientation} on $\sM|S$ and the \emph{induced
contraction orientation} on $\sM/S$. A few remarks:
The key feature of this construction is that although the pair 
$(\eta|S, \eta/S)$ is not unique --- it is determined only up to a 
simultaneous sign flip $(\eta|S,\eta/S) \mapsto (-\eta|S,-\eta/S)$ 
--- this ambiguity cancels in the tensor product:
\begin{enumerate}
    \item The class $[\sM|S,\eta|S]\otimes[\sM/S,\eta/S]$ in $\MC_{\bullet}\otimes\MC_{\bullet}$ is well-defined and 
    \emph{independent} of the choice of preimage $\mu_S^{-1}(\eta)$, since replacing $(\eta|S,\eta/S)$ by $(-\eta|S,-\eta/S)$  changes both tensor factors by $-1$ and leaves their product unchanged.
    
    \item If $S=\varnothing$ then $\ZZ\langle S\rangle \cong \ZZ$  and we take $\eta|\varnothing \coloneqq 1$, the canonical orientation of the empty matroid, and $\eta/\varnothing \coloneqq \eta$. At the other extreme, if $S=E$ we take $\eta|E \coloneqq \eta$ and $\eta/E \coloneqq 1$.
    
    \item If $|S|=1$, say $S=\{x\}$, then $\sM|S$ is the matroid on $\{x\}$, and it has rank $1$ if $x$ is not a loop and rank $0$ if $x$ is a loop. In this case one convenient choice is $\eta|S \coloneqq x$ and $\eta/S \coloneqq \iota_x(\eta)$, since $\mu_S(x\otimes \iota_x(\eta))=\eta$. Hence $\eta/S$ agrees with the orientation obtained from $\eta$ by interior product with respect to $x$ in Definition~\ref{def:orientation}; equivalently, as an element of $\bigwedge^{|E|-1}\ZZ\langle E\setminus\{x\}\rangle$, it is the same element that defines both $\eta\setminus x$ and $\eta/x$. The factor $\eta|S$ is simply the induced orientation on the singleton restriction $\sM|S$.

\end{enumerate}
It is point~(1) that makes it more natural to define the coproduct directly on $\MC$ rather than on $\widetilde{\cM}$. With this in hand we now turn to verifying that $\Delta$ is well-defined and satisfies the expected properties.

For readers who prefer a concrete model, one may keep the following special case in mind. Suppose $E=\{e_1<\cdots<e_n\}$ and $\eta=e_1\wedge\cdots\wedge e_n$. If $S=\{e_{i_1}<\cdots<e_{i_k}\}$ and $E\setminus S=\{e_{j_1}<\cdots<e_{j_{n-k}}\}$, let $\sigma_S$ be the shuffle sending $(1,\dots,n)$ to $(i_1,\dots,i_k,j_1,\dots,j_{n-k})$. Then one convenient choice is
\[
\eta|S=e_{i_1}\wedge\cdots\wedge e_{i_k},
\qquad
\eta/S=\sgn(\sigma_S)\,e_{j_1}\wedge\cdots\wedge e_{j_{n-k}},
\]
since $\mu_S(\eta|S\otimes\eta/S)=\eta$. In other words, the contraction orientation is obtained by writing the elements of $S$ first and then correcting by the sign of the resulting shuffle. This is exactly the sign bookkeeping that appears later in the proofs.

\begin{example}\label{ex:K4-3}
We now revisit the running example of $[\sM(K_4), \eta]$ in the context of the restriction-contraction coproduct. Since $\sM(K_4)$ has 6 ground set elements, writing out $\Delta([\sM(K_4),\eta])$ in full would have 64 terms, and so we only consider some example terms. For instance, when $S=\{2\}$ from our conventions in Examples~\ref{ex:K4-1} and \ref{ex:K4-2} we have $\eta|\{2\}=2$ and $\eta/\{2\}=-1\wedge 3\wedge 4\wedge 5\wedge 6=\eta_2$. Alternatively, since 
\[
2\wedge\left(-1\wedge 3\wedge 4\wedge 5\wedge 6\right)
=
1\wedge 2\wedge 3\wedge 4\wedge 5\wedge 6
=
\eta,
\]
we could have taken this pair to be $-2$ and $1\wedge 3\wedge 4\wedge 5\wedge 6$. Notice in $\widetilde{\cM}\otimes\widetilde{\cM}$ these different orientations give different classes. However, since both factors of the tensor change sign, the overall sign remains unchanged, giving the same class in \(\MC\otimes \MC\). This is exactly the simultaneous sign ambiguity discussed above. 

The first and last terms of the coproduct are $[\varnothing,1]\otimes[\sM(K_4),\eta]$ and $[\sM(K_4),\eta] \otimes [\varnothing, 1]$ and the remaining $62$ terms are indexed by the nonempty proper subsets of $[6]$. 
\end{example}

Before moving forward there are again two separate points to keep in mind. The well-definedness of $\Delta$ is bookkeeping: the simultaneous sign ambiguity in choosing \((\eta|S,\eta/S)\) cancels in the tensor product, and isomorphisms of matroids carry compatible decompositions of the ground-set orientation. Coassociativity is the conceptual point: applying $\Delta$ twice simply refines the ground set into an ordered partition $T\sqcup U\sqcup V=E(\sM)$, and both iterated coproducts sum over the same ordered partitions. The only non-formal step is to check that the induced orientations agree.

\begin{lemma}\label{lem:coproduct}
    The $\Delta$ coproduct is well-defined, coassociative, and counital with counit $\epsilon:\MC_{\bullet}\to \QQ$  defined by $\epsilon([\varnothing,1])=1$ and zero otherwise.
\end{lemma}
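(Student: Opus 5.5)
The plan is to lift $\Delta$ to a map $\widetilde{\Delta}:\widetilde{\cM}\to \MC\otimes\MC$, verify that it kills $\widetilde{\cR}$, and then check coassociativity and counitality on generators. The whole argument runs through the canonical isomorphisms $\mu_S$. Well-definedness of each summand $[\sM|S,\eta|S]\otimes[\sM/S,\eta/S]$ in $\MC\otimes\MC$ was already observed in remark (1): a simultaneous sign flip of the pair cancels. So $\widetilde{\Delta}((\sM,\eta))$ is a well-defined element of $\MC\otimes\MC$.

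\textbf{Step 1: $\widetilde{\Delta}$ kills $\widetilde{\cR}$.}
\begin{enumerate}
\item \emph{Sign relation.} If $\mu_S(\eta|S\otimes\eta/S)=\eta$, then $\mu_S((-\eta|S)\otimes\eta/S)=-\eta$. Hence the $S$-summand of $\widetilde{\Delta}((\sM,-\eta))$ is the negative of that of $\widetilde{\Delta}((\sM,\eta))$, and the sum vanishes.
\item \emph{Isomorphism relation.} Let $\psi:\sM\to\sM'$ be an isomorphism. Reindex the sum for $\sM'$ by $S'=\psi(S)$. Then $\psi$ restricts to isomorphisms $\sM|S\to\sM'|\psi(S)$ and $\sM/S\to \sM'/\psi(S)$; for the contraction this follows from the definition of $\cI(\sM/S)$, which is phrased purely in terms of independent sets.
\item \emph{Compatibility of orientations.} The square
\[
\psi_*\circ\mu_S=\mu_{\psi(S)}\circ\bigl((\psi|_S)_*\otimes(\psi|_{E\setminus S})_*\bigr)
\]
commutes, as in Lemma~\ref{lem:pushforward-del-con}, by checking on wedges of basis elements. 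So $\bigl((\psi|_S)_*(\eta|S),(\psi|_{E\setminus S})_*(\eta/S)\bigr)$ is a valid choice of induced pair for $\psi_*\eta$. By remark (1), the two summands then agree in $\MC\otimes\MC$.
\end{enumerate}

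\textbf{Step 2: Counitality.} The term $(\epsilon\otimes\id)\Delta$ picks out only the summand with $\sM|S$ empty, i.e.\ $S=\varnothing$. By remark (2) this summand is $[\varnothing,1]\otimes[\sM,\eta]$, which gives $[\sM,\eta]$. The identity $(\id\otimes\epsilon)\Delta=\id$ follows symmetrically from $S=E$. Classes whose matroid has nonempty ground set are killed by $\epsilon$, so no other terms contribute.

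\textbf{Step 3: Coassociativity (the main step).}
\begin{enumerate}
\item \emph{Index sets.} Both $(\Delta\otimes\id)\Delta$ and $(\id\otimes\Delta)\Delta$ are sums over ordered decompositions $E=T\sqcup U\sqcup V$. The left side has terms indexed by $T\subseteq S$, $S=T\sqcup U$. The right side has terms indexed by $S=T$ and $U\subseteq E\setminus T$.
\item \emph{Matroids.} We need the standard minor identities
\[
(\sM|S)|T=\sM|T,\qquad (\sM|S)/T=(\sM/T)|U,\qquad \sM/S=(\sM/T)/U.
\]
These follow from the independent-set description of contraction (cf.\ \cite{oxley11}).
\item \emph{Orientations.} Consider the triple canonical isomorphism
\[
\bigwedge^{|T|}\ZZ\langle T\rangle\otimes\bigwedge^{|U|}\ZZ\langle U\rangle\otimes\bigwedge^{|V|}\ZZ\langle V\rangle\to\bigwedge^{|E|}\ZZ\langle E\rangle.
\]
It factors both as $\mu_S\circ(\mu^{S}_T\otimes\id)$ and as $\mu_T\circ(\id\otimes\mu^{E\setminus T}_U)$. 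Both factorizations are just $\alpha\otimes\beta\otimes\gamma\mapsto\alpha\wedge\beta\wedge\gamma$, and this expression is associative.
\item \emph{Conclusion.} Each iterated choice of induced orientations yields a triple $(\alpha,\beta,\gamma)$ with $\alpha\wedge\beta\wedge\gamma=\eta$. Any two such triples differ by signs whose product is $+1$. So the corresponding elements of $\MC^{\otimes3}$ agree, extending remark (1) to three factors.
\end{enumerate}

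The expected obstacle is precisely this sign bookkeeping: confirming that the sign ambiguities are always simultaneous, so that only the overall product of signs matters. Formulating everything via the associative wedge map, rather than explicit shuffle signs, should make it formal.
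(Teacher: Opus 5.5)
Your proposal is correct and follows essentially the same route as the paper. You kill the orientation-reversal and isomorphism relations via the commutative square relating $\mu_S$, $\mu_{\psi(S)}$ and $\psi_*$, obtain counitality from the $S=\varnothing$ and $S=E$ terms, and prove coassociativity by indexing both iterated coproducts over ordered partitions $T\sqcup U\sqcup V$, using that the triple of induced orientations is determined up to signs with product $+1$; your appeal to associativity of the wedge map is just an explicit form of the paper's ``diagram chase.''
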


\begin{proof}
We first check that $\Delta$ is well-defined on $\MC$. It suffices to show that it respects the two types of relations generating $\widetilde{\cR}$. For the orientation-reversing relation, fix a matroid $\sM$ with orientation $\eta$. For each subset $S\subseteq E(\sM)$, by the discussion preceding the lemma we may take the induced orientations for $-\eta$ to be $-\eta|S$ and $\eta/S$. Thus
\[
\Delta([\sM,-\eta])=\sum_{S\subseteq E(\sM)} [\sM|S,-\eta|S]\otimes[\sM/S,\eta/S]= -\Delta([\sM,\eta]),
\]
and so $\Delta([\sM,\eta]+[\sM,-\eta])=0$. Turning to the isomorphism relation let $\psi:\sM\to \sM'$ be an isomorphism. For each subset $S\subseteq E(\sM)$ the map $\psi$ induces isomorphisms
\[
\psi_{S}:\sM|S\to \sM'|\psi(S)
\qquad \text{and} \qquad
\psi^{S}:\sM/S\to \sM'/\psi(S),
\]
such that the following diagram commutes
\[
\begin{tikzcd}[column sep = 4em, row sep = 3em]
\displaystyle \bigwedge^{|S|}\ZZ\langle S\rangle \otimes \bigwedge^{|E(\sM)\setminus S|}\ZZ\langle E(\sM)\setminus S\rangle \arrow[r,"\mu_{S}"] \arrow[d,"(\psi_{S})_*\otimes \psi^S_*"] & \displaystyle \bigwedge^{|E(\sM)|}\ZZ\langle E(\sM)\rangle \arrow[d,"\psi_{*}"] \\
\displaystyle \bigwedge^{|S|}\ZZ\langle \psi(S)\rangle \otimes \bigwedge^{|E(\sM')\setminus \psi(S)|}\ZZ\langle E(\sM')\setminus \psi(S)\rangle \arrow[r,"\mu_{\psi(S)}"] & \displaystyle \bigwedge^{|E(\sM')|}\ZZ\langle E(\sM')\rangle
\end{tikzcd}.
\]
From this we see that if $\mu_{S}(\eta|S\otimes \eta/S)=\eta$ then
\[
\mu_{\psi(S)}\left((\psi_{S})_*\left(\eta|S\right)\otimes \psi^{S}_*\left(\eta/S\right)\right)=\psi_{*}(\eta),
\]
which means that, up to a simultaneous sign that does not matter on $\MC\otimes \MC$ we have that $(\psi_{S})_*(\eta|S)=\psi_*(\eta)|\psi(S)$ and $\psi^S_*(\eta/S)=\psi_*(\eta)/\psi(S)$. Thus, for any subset $S \subset E(\sM)$ we have the following equalities in $\MC\otimes \MC$
\begin{align*}
\left[\sM|S, \eta|S\right] \otimes [\sM/S, \eta/S] &= \left[\sM'|\psi(S), (\psi_{S})_{*}(\eta|S)\right] \otimes \left[\sM'/\psi(S), \psi^{S}_{*}(\eta/S)\right] 
\\&= 
\left[\sM'|\psi(S), \psi_*(\eta)|\psi(S)\right] \otimes \left[\sM'/\psi(S), \psi_*(\eta)/\psi(S)\right] 
\end{align*}
from which we get $\Delta\left(\left[\sM, \eta\right]\right) - \Delta\left(\left[\sM', \psi_*(\eta)\right]\right)=0$ implying that $\Delta$ is well-defined.

Turning to coassociativity we must show that $(\Delta \otimes \id)(\Delta([\sM,\eta]))$ is equal to $(\id \otimes \Delta)(\Delta([\sM,\eta]))$. By definition we have that 
\begin{align*}
(\Delta \otimes \id)\left(\Delta\left(\left[\sM,\eta\right]\right)\right) &= \sum_{S \subset E(\sM)} \Delta\left(\left[\sM|S, \eta|S\right]\right) \otimes \left[\sM/S, \eta/S\right] \\
&=
\sum_{S \subset E(\sM)} \sum_{T \subset S} \left[ (\sM|S)|T, (\eta|S)|T \right] \otimes \left[ (\sM|S)/T, (\eta|S)/T\right] \otimes \left[\sM/S, \eta/S\right],
\intertext{which, after setting $U=S \setminus T$ and $V=E(\sM)\setminus S$ becomes}
&= \sum_{T \sqcup U \sqcup V = E(\sM)} \left[\sM|T, \eta|T\right]\otimes \left[(\sM|(T\sqcup U))/T,\; (\eta|(T\sqcup U))/T \right] \otimes \left[\sM/(T\sqcup U), \eta/(T\sqcup U)\right],
\end{align*}
where the last sum is over all ordered partitions of $T \sqcup U \sqcup V=E(\sM)$. Similarly we have that 
\[
(\id \otimes \Delta)\left(\Delta\left(\left[\sM,\eta\right]\right)\right) =  \sum_{T \sqcup U \sqcup V = E(\sM)} \left[\sM|T, \eta|T\right] \otimes \left[(\sM/T)|U, (\eta/T)|U \right] \otimes \left[ (\sM/T)/U, (\eta/T)/U\right].
\]
Fixing a partition $E(\sM)=T\sqcup U\sqcup V$, there is a canonical isomorphism 
\[
\begin{tikzcd}[row sep = .5em, column sep = 3.25em]
\displaystyle \bigwedge^{|T|} \ZZ\langle T\rangle \otimes \bigwedge^{|U|} \ZZ\langle U\rangle \otimes \bigwedge^{|V|} \ZZ\langle V\rangle \arrow[r] & \bigwedge^{|E(\sM)|} \ZZ\langle E(\sM)\rangle
\end{tikzcd},
\]
and so we can choose primitive elements $\eta_{T},\eta_{U},\eta_{V}$, unique up to simultaneous signs whose product is $+1$, so that $\eta$ is the image of $\eta_{T}\otimes \eta_{U} \otimes \eta_{V}$. A diagram chase similar to the one used in the proof of well-definedness shows that
\[
\eta|T=\eta_T,\qquad
(\eta|(T\sqcup U))/T=(\eta/T)|U=\eta_U,\qquad
\eta/(T\sqcup U)=(\eta/T)/U=\eta_V.
\]
Combined with the fact that restriction and contraction satisfy
\[
\left(\sM|(T\sqcup U)\right)/T = (\sM/T)|U
\qquad \text{and} \qquad
\sM/(T\sqcup U) = \left(\sM/T\right)/U,
\]
this shows that the terms corresponding to the partition $E(\sM)=T\sqcup U\sqcup V$ appearing in $(\Delta \otimes \id)(\Delta([\sM,\eta]))$ and in $(\id \otimes \Delta)(\Delta([\sM,\eta]))$ are equal. 

Lastly we check the counit identity, which we do by direct computation:
\[
\left(\epsilon\otimes \id \right)\left(\Delta\left(\left[\sM,\eta\right]\right)\right) = \sum_{S\subset E(\sM)} \epsilon\left(\left[\sM|S,\eta|S \right]\right) \otimes \left[\sM/S,\eta/S\right],
\]
which is equal to $[\sM,\eta]$ as $\epsilon([\sM|S,\eta|S])$ is zero unless $S=\varnothing$, in which case $\sM/S=\sM$. A similar calculation shows that the only term of $\left(\id \otimes \epsilon \right)\left(\Delta\left(\left[\sM,\eta\right]\right)\right)$ that is nonzero occurs when $S=E(\sM)$ in which case $\sM|S=\sM$. 
\end{proof}

\begin{lemma}\label{lem:coproduct-gradings}
\begin{enumerate}
	\item The morphism $\Delta$ is homogeneous of degree $(0,0)$ on $\MC_{\bullet,\bullet}$.
	\item The morphism $\Delta$ is homogeneous of degree $0$ on $\MC_{\bullet}$.
\end{enumerate}
\end{lemma}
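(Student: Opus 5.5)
The plan is to reduce both statements to the standard additivity of rank under restriction and contraction, and to check it on generators. Since $\Delta$ is linear, and since the relations in $\widetilde{\cR}$ are bihomogeneous (Lemma~\ref{lem:coproduct} already gives well-definedness), it suffices to take a generator $[\sM,\eta]\in\MC_{k,r}$. For each $S\subseteq E(\sM)$ we then show that the summand $[\sM|S,\eta|S]\otimes[\sM/S,\eta/S]$ lies in $\MC_{k',r'}\otimes\MC_{k'',r''}$ with $k'+k''=k$ and $r'+r''=r$. Here the bigrading on $\MC\otimes\MC$ is the usual tensor bigrading.

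The key identity is
\[
\rk(\sM|S)+\rk(\sM/S)=\rk(\sM).
\]
I would prove it directly from the definition of contraction. Fix a maximal independent subset $S'\subseteq S$ of $\sM$, so $|S'|=\rk(\sM|S)$. By the augmentation property of independent sets, $S'$ extends to a basis $B$ of $\sM$. Maximality of $S'$ within $S$ forces $B\cap S=S'$.

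Next I would argue that $B\setminus S'$ is a maximal independent set of $\sM/S$. It is independent, witnessed by $S'$. Suppose some independent $T\supsetneq B\setminus S'$ in $\sM/S$ existed, witnessed by some maximal $S''\subseteq S$. Since all maximal independent subsets of $S$ have the same size $|S'|$, the set $T\cup S''$ would be independent of size larger than $\rk(\sM)$, which is impossible. Hence $\rk(\sM/S)=|B|-|S'|=\rk(\sM)-\rk(\sM|S)$.

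This rank computation is the only step with any content. The main point to handle carefully is that the witness $S''$ in the definition of $\cI(\sM/S)$ may differ from $S'$, and the equicardinality of maximal independent subsets of $S$ takes care of that.

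Given the rank identity, the nullity identity follows at once. The ground sets $S$ and $E(\sM)\setminus S$ have sizes adding to $|E(\sM)|=k+r$, so
\[
\nul(\sM|S)+\nul(\sM/S)=(|S|-r')+(|E\setminus S|-r'')=k.
\]
Orientations play no role, since they do not affect the bigrading. This proves (1).

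Part (2) then follows formally, exactly as in Lemma~\ref{lem:direct-sum-gradings}. The ground-set-size grading is the total grading $n=k+r$, and a bidegree-$(0,0)$ map is homogeneous of degree $0$ for the total grading. Alternatively, one can note directly that $|S|+|E(\sM)\setminus S|=|E(\sM)|$.
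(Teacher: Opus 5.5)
Your proof is correct and follows the same route as the paper. You check on generators that each summand $[\sM|S,\eta|S]\otimes[\sM/S,\eta/S]$ has ranks and nullities summing to those of $\sM$, using $\rk(\sM|S)+\rk(\sM/S)=\rk(\sM)$ together with ground-set sizes, and you deduce part (2) formally. The only difference is that you actually verify that $B\setminus S'$ is a basis of $\sM/S$ (handling the possibly different witness $S''$ by equicardinality of maximal independent subsets of $S$), a step the paper simply asserts as holding ``by definition.''
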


\begin{proof}
Part (2) follows from (1) by formal properties of graded morphisms. For (1) it is enough to check that if $[\sM,\eta]\in \MC_{k,r}$, then every summand in $\Delta([\sM,\eta])$ has nullity adding to $k$ and rank adding to $r$. For a subset  $S\subseteq E(\sM)$, by definition, a basis of $\sM/S$ is obtained by removing from a basis of $\sM$ a maximal independent subset of $S$. Combining this with the fact that $E(\sM|S)=S$ and $E(\sM/S)=E(\sM)\setminus S$ gives:
\[
\rk(\sM) = \rk(\sM/S)+\rk(\sM|S) \quad \quad \text{and} \quad \quad \nul(\sM) = \nul(\sM|S)+\nul(\sM/S).
\]
This proves the claim since every term of $\Delta([\sM,\eta])$ is of the form $[\sM|S,\eta|S]\otimes [\sM/S, \eta/S]$. 
\end{proof}

We now turn to exploring the compatibility between the various differentials on $\MC$ and  the coproduct we have just defined. Before doing so, we briefly recall the basic definitions of coderivations since these seem less common.  Going forward we follow the usual Koszul sign convention for how graded maps act on tensor products. Namely if $f:C_{\bullet} \to C_{\bullet}$ is a morphism of $\ZZ$-graded vector spaces of degree $d$ then we define $(\id \otimes f)$ to be the morphism $C_{\bullet}\otimes C_{\bullet} \to C_{\bullet}\otimes C_{\bullet}$ given on homogeneous simple tensors by
\[
\left(\id \otimes f\right)(a\otimes b) = (-1)^{d|a|} a \otimes f(b)
\]
where $|a|$ denotes the degree of $a$. Further, suppose $C_{\bullet}$ is equipped with a coassociative coproduct $\Delta:C_{\bullet}\to C_{\bullet}\otimes C_{\bullet}$ of degree $0$. A homogeneous linear map $D:C_{\bullet}\to C_{\bullet}$ of degree $d$ is called a \emph{graded coderivation} with respect to $\Delta$ if the co-Leibniz identity
\[
\Delta \circ D=(D\otimes \id+\id\otimes D)\circ \Delta
\]
holds. Using the Koszul rule above, this identity may be written explicitly on homogeneous elements $x\in C_{\bullet}$ as
\[
\Delta(D(x))=\sum_{i} \left(D(a_i) \otimes b_i + (-1)^{d |a_i|} a_i \otimes D(b_i) \right)
\]
where $\Delta(x)=\sum_i a_i \otimes b_i$.
More generally, one can consider one-sided versions of this condition. A degree $d$ map $D:C_{\bullet}\to C_{\bullet}$ is called a \emph{left coderivation} if
$\Delta \circ D=(D\otimes \id)\circ \Delta$ and a \emph{right coderivation} if $\Delta \circ D=(\id\otimes D)\circ \Delta$.
Equivalently, on homogeneous elements as above, these identities become
\[
\Delta(D(x))=\sum_{i} D(a_{i}) \otimes b_i \quad \quad \text{and} \quad \quad \Delta(D(x)) = \sum_{i} (-1)^{d|a_i|} a_i \otimes D(b_i)
\]
respectively. In particular, a graded coderivation is precisely the sum of a left and a right coderivation. Returning to our case, unlike the direct sum product, the restriction-contraction coproduct is asymmetric: the deletion-type differentials act on the contraction factor, while the contraction-type differentials act on the restriction factor. Thus the morphisms are only ever one-sided graded coderivations for $\Delta$. 

\begin{lemma}\label{lem:coderivation}
\begin{enumerate}
	\item On $\MC_{\bullet}$ both $\partial_{\del}$ and $\partial_{\clp}$ are right coderivations for $\Delta$, i.e., 
	\[
	\Delta \circ \partial_{\del} = (\id \otimes \partial_{\del}) \circ \Delta\quad \text{and} \quad 
	\Delta \circ \partial_{\clp}= (\id \otimes \partial_{\clp}) \circ \Delta.
	\]
	\item  On $\MC_{\bullet}$ both $\partial_{\con}$ and $\partial_{\lp}$ are left coderivations for $\Delta$, i.e., 
	\[
	\Delta \circ \partial_{\con}= (\partial_{\con} \otimes \id) \circ \Delta\quad \text{and} \quad 
	\Delta \circ \partial_{\lp} = (\partial_{\lp} \otimes  \id ) \circ \Delta.
	\]
	\end{enumerate}
\end{lemma}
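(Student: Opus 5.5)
Both sides can be expanded as sums over pairs $(x,S)$ with $x\in E(\sM)$ and $S\subseteq E(\sM)\setminus\{x\}$, and the plan is to match them term by term. Consider $\partial_{\del}$ and a generator $[\sM,\eta]$. The left side is
\[
\Delta(\partial_{\del}[\sM,\eta])=\sum_{x \text{ not a coloop of } \sM}\;\sum_{S\subseteq E\setminus\{x\}} [(\sM\setminus x)|S,(\eta\setminus x)|S]\otimes[(\sM\setminus x)/S,(\eta\setminus x)/S].
\]
The right side is
\[
(\id\otimes\partial_{\del})\Delta[\sM,\eta]=\sum_{S}\;\sum_{x\in E\setminus S,\ x \text{ not a coloop of } \sM/S} (-1)^{|S|}[\sM|S,\eta|S]\otimes[(\sM/S)\setminus x,(\eta/S)\setminus x].
\]
Here the Koszul sign is $(-1)^{d|a|}$ with $d=-1$ and $|a|=|S|$.

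\textbf{Matroid identities.} For $x\notin S$ we have $(\sM\setminus x)|S=\sM|S$ and $(\sM\setminus x)/S=(\sM/S)\setminus x$. What needs care is matching the index conditions: $x$ should be a non-coloop of $\sM$ exactly when it is a non-coloop of $\sM/S$. This is false in general, since contraction can create coloops. So I would first test whether the identity actually holds. The case $\sM=U_{1,2}$, $S=\{a\}$, $x=b$ is a counterexample: $b$ is not a coloop of $\sM$, but $\sM/a=U_{0,1}$ makes $b$ a loop, hence not a coloop either. That case is consistent, so I would use the rank formula instead. The element $x$ is a coloop of $\sM/S$ iff $\rk(E\setminus x)-\rk(S)<\rk(E)-\rk(S)$, iff $x$ is a coloop of $\sM$. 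The contraction subtracts $\rk(S)$ uniformly, so the coloop condition is indeed independent of $S$ whenever $x\notin S$. The dual statement, that $x$ is a loop of $\sM|S$ iff $x$ is a loop of $\sM$ for $x\in S$, handles the contraction-type differentials. Similarly, $x$ is a coloop of $\sM$ iff it is a coloop of $\sM\setminus y$ for $y\ne x$ not a coloop, which is what is needed in the $\partial_{\clp}$ case.

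\textbf{Sign matching.} This is the main obstacle. Write $\eta=\mu_S(\eta|S\otimes\eta/S)$, view $\iota_x$ as a graded derivation as in the proof of Lemma~\ref{lem:direct-sum-derivation}, and use $\iota_x(\eta|S)=0$ since $x\notin S$. This gives
\[
\iota_x(\eta)=\iota_x(\eta|S\wedge\eta/S)=(-1)^{|S|}\,\eta|S\wedge\iota_x(\eta/S).
\]
So one valid choice of decomposition of $\eta\setminus x$ along $S$ is $\bigl(\eta|S,\,(-1)^{|S|}(\eta/S)\setminus x\bigr)$. This reproduces exactly the Koszul sign $(-1)^{|S|}$, and by the simultaneous-sign independence remark (1) the chosen representative does not matter in $\MC\otimes\MC$. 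The terms then agree pairwise, and $\partial_{\clp}$ is handled in the same way.

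\textbf{Contraction-type differentials.} For $\partial_{\con}$ and $\partial_{\lp}$ the roles reverse. For $x\in S$, use $(\sM/x)|(S\setminus x)=(\sM|S)/x$ and $(\sM/x)/(S\setminus x)=\sM/S$. For the orientation, $\iota_x(\eta|S\wedge\eta/S)=\iota_x(\eta|S)\wedge\eta/S$, so no sign arises, matching the left coderivation with no Koszul sign. Reindexing $S'=S\setminus x$ gives the bijection of terms. Alternatively, this case follows from the deletion case via duality (Proposition~\ref{prop:duality}), though the coproduct's restriction-contraction asymmetry makes the direct check cleaner.
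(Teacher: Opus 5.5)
Your proposal is correct and is essentially the paper's proof: the same term-by-term matching over pairs $(x,S)$, the same identities $(\sM\setminus x)|S=\sM|S$ and $(\sM\setminus x)/S=(\sM/S)\setminus x$, and the same interior-product computation producing $(-1)^{|S|}$ to match the Koszul sign, with your rank-function argument via $\rk_{\sM/S}(A)=\rk(A\cup S)-\rk(S)$ supplying a justification for coloop invariance under contraction that the paper only asserts. Do clean up two asides: contraction never creates coloops (deletion does), and the claim that $x$ is a coloop of $\sM$ iff it is a coloop of $\sM\setminus y$ is false (take $\sM=U_{1,2}$ on $\{x,y\}$) and is not what the $\partial_{\clp}$ case needs, since that case uses exactly the same contraction invariance you already proved.
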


The key point is conceptual rather than computational: in a term $[\sM|S,\eta|S]\otimes[\sM/S,\eta/S]$ of $\Delta([\sM,\eta])$, deleting an element $x\notin S$ changes only the contraction factor, while contracting an element $x\in S$ changes only the restriction factor. The only subtlety is the Koszul sign produced when the interior product with respect to $x$ is moved past the $|S|$ elements contributing to $\eta|S$.

\begin{proof}
Again we check the formulas on generators beginning with $\partial_{\del}$. For a class $[\sM,\eta]$ expanding the left-hand side gives
\begin{align*}
\Delta\left(\partial_{\del}([\sM,\eta])\right)
&= \sum_{\substack{x\in E(\sM)\\ \text{$x$ not a coloop}}} \Delta\left([\sM\setminus x,\eta\setminus x]\right) \\
&= \sum_{\substack{x\in E(\sM)\\ \text{$x$ not a coloop}}}\;\;\sum_{S\subseteq E(\sM)\setminus \{x\}}
[(\sM\setminus x)|S,(\eta\setminus x)|S]\otimes[(\sM\setminus x)/S,(\eta\setminus x)/S].
\end{align*}
For $x\not\in S$, deletion commutes with restriction and contraction by $S$, and so $(\sM\setminus x)|S = \sM|S$ and $(\sM\setminus x)/S = (\sM/S)\setminus x$. Further, for $x\not\in S$, $x$ is a coloop of $\sM/S$ if and only if $x$ is a coloop of $\sM$. Letting $\mu_{S}(\eta|S\otimes \eta/S)=\eta$, since $x\not\in S$:
\[
\eta\setminus x = \iota_{x}(\eta)=\iota_{x}\left(\eta|S\wedge \eta/S\right)=(-1)^{|S|}\eta|S\wedge \iota_{x}(\eta/S)=\mu_{S}\left(\eta|S\otimes (-1)^{|S|}(\eta/S)\setminus x\right).
\]
Thus, we can take $(\eta\setminus x)|S=\eta|S$ and $(\eta\setminus x)/S = (-1)^{|S|}(\eta/S)\setminus x$. Substituting these identities into our computation of $\Delta(\partial_{\del}([\sM,\eta]))$ and reindexing gives
\begin{align*}
\Delta\left(\partial_{\del}([\sM,\eta])\right)
&= \sum_{S\subseteq E(\sM)} \sum_{\substack{x\in E(\sM)\setminus S\\ \text{$x$ not a coloop in }\sM/S}} (-1)^{|S|}\left[\sM|S,\eta|S\right]\otimes\left[(\sM/S)\setminus x,(\eta/S)\setminus x\right] \\
&= \sum_{S\subseteq E(\sM)} (-1)^{|S|}\left[\sM|S,\eta|S\right]\otimes \partial_{\del}\left(\left[\sM/S,\eta/S\right]\right) = \left(\id\otimes \partial_{\del}\right)\left(\Delta\left([\sM,\eta]\right)\right) 
\end{align*}
The proofs in the remaining cases are analogous. 
% since $\partial_{\del}$ has degree $-1$ and $|[\sM|S,\eta|S]|=|S|$. The proof for $\partial_{\clp}$ is identical, replacing ``not a coloop'' by ``a coloop'' throughout.

%
\end{proof}

%%%%%%%%%%%%%%%%%%%%%%%%%%%%%%%%%%%%%%%%%%%%%%%%%%%
\subsection{Hopf Algebras \& Acyclicity}\label{subsec:dg-hopf-acyclic}
%%%%%%%%%%%%%%%%%%%%%%%%%%%%%%%%%%%%%%%%%%%%%%%%%%%

Having defined a product and coproduct on $\MC$ we now turn to how they can be combined. The most natural packaging would be a differential graded Hopf algebra, in which a single differential $\partial$ is simultaneously a graded derivation for $\star$ and a graded coderivation for $\Delta$. Lemma~\ref{lem:coderivation} shows this fails for each differential; this accounts for the asymmetry in Theorem~\ref{thm:hopf-algebra}, which we now prove.

\begin{proof}[Proof of Theorem~\ref{thm:hopf-algebra}]
By Lemma~\ref{lem:direct-sum-product} the direct sum product is associative and unital, while Lemma~\ref{lem:coproduct} shows that $\Delta$ is coassociative and counital. Lemma~\ref{lem:direct-sum-gradings}(2) and Lemma~\ref{lem:coproduct-gradings}(2) show that both structures are homogeneous of degree $0$ for the grading by ground set size. Thus to show that $(\MC_{\bullet},\star,[\varnothing,1],\Delta,\epsilon)$ is a connected graded Hopf algebra it remains only to check the usual compatibilities between $\star$ and $\Delta$.

First we check that $\Delta$ is an algebra map with respect to $\star$, i.e., $\Delta([\sM,\eta] \star [\sQ,\omega])=\Delta([\sM,\eta]) \star \Delta([\sQ,\omega])$. Here the product on the right-hand side is the extension of $\star$ to $\MC_{\bullet}\otimes \MC_{\bullet}$ by
\[
(a\otimes b)\star (c\otimes d)\coloneqq (-1)^{|b||c|}(a\star c)\otimes (b\star d).
\]
Towards this, expanding $\Delta([\sM,\eta] \star [\sQ,\omega])$ using the definitions gives
\[
\Delta\left(\left[\sM,\eta\right]\star\left[\sQ,\omega\right]\right) = \Delta\left(\left[ \sM\oplus \sQ, \eta\wedge \omega\right]\right) = \sum_{T\subseteq E(\sM \oplus\sQ)} \left[(\sM\oplus \sQ)|T,(\eta\wedge \omega)|T\right]\otimes\left[(\sM\oplus \sQ)/T,(\eta\wedge \omega)/T\right].
\]
Since $E(\sM\oplus\sQ)=E(\sM)\sqcup E(\sQ)$ every subset $T$ in the sum above decomposes uniquely as $T=S\sqcup U$ with $S\subseteq E(\sM)$ and $U\subseteq E(\sQ)$. For such a pair we have
\[
(\sM\oplus\sQ)|T = (\sM\oplus \sQ)|(S\sqcup U) = (\sM|S)\oplus(\sQ|U)
\qquad\text{and}\qquad
(\sM\oplus \sQ)/T = (\sM\oplus \sQ)/(S\sqcup U) = (\sM/S)\oplus(\sQ/U).
\]
Moreover, if $\mu_{S}(\eta|S\otimes \eta/S)=\eta$ and $\mu_{U}(\omega|U\otimes \omega/U)=\omega$, then
\[
\eta\wedge \omega = \eta|S\wedge \eta/S\wedge \omega|U\wedge \omega/U = (-1)^{|E(\sM)\setminus S|\,|U|} \left(\eta|S\wedge \omega|U\right)\wedge \left(\eta/S\wedge \omega/U\right).
\]
This means we may choose the orientations for the subsets $T=S\sqcup U$ so that
\begin{align*}
\left[(\sM\oplus \sQ)|T,(\eta\wedge \omega)|T\right]\otimes & \left[(\sM\oplus \sQ)/T,(\eta\wedge \omega)/T\right] \\
&= (-1)^{|E(\sM)\setminus S|\,|U|}
\left(\left[\sM|S,\eta|S\right]\star\left[\sQ|U,\omega|U\right]\right)\otimes
\left(\left[\sM/S,\eta/S\right]\star\left[\sQ/U,\omega/U\right]\right).
\end{align*}
Finally expressing $\Delta([\sM,\eta]\star[\sQ,\omega])$ as a sum over subsets of $E(\sM)$ and $E(\sQ)$  in this way gives
\begin{align*}
\Delta\left(\left[\sM,\eta\right]\star\left[\sQ,\omega\right]\right)
= \sum_{S,U} (-1)^{|E(\sM)\setminus S|\,|U|}
\left(\left[\sM|S,\eta|S\right]\star\left[\sQ|U,\omega|U\right]\right)\otimes
\left(\left[\sM/S,\eta/S\right]\star\left[\sQ/U,\omega/U\right]\right).
\end{align*}
 This is equal to $\Delta([\sM,\eta])\star \Delta([\sQ,\omega])$ as needed since $|E(\sQ|U)|= |U|$ and $|E(\sM/S)| = |E(\sM) \setminus S|$. 
 
To see that the counit is multiplicative, notice $\epsilon([\sM,\eta]\star[\sQ,\omega])$ is nonzero if and only if $\sM\oplus \sQ=\varnothing$, which happens precisely when both $\sM$ and $\sQ$ are empty. Similarly, from the definitions one sees that $\Delta([\varnothing,1])=[\varnothing,1]\otimes [\varnothing,1]$. Finally, $\MC_{\bullet}$ is connected since $\MC_{0}\cong \QQ\cdot [\varnothing,1]$.  This implies that $\MC_{\bullet}$ has an antipode by the standard connected graded bialgebra argument, finishing the proof that $(\MC_{\bullet},\star,[\varnothing,1],\Delta,\epsilon)$ is a connected graded Hopf algebra.

The statements in parts (1), (2), and (3) concerning the differentials are the content of Lemmas~\ref{lem:coderivation} and \ref{lem:direct-sum-derivation}  together with the fact that the sum of right (respectively left) coderivations is again a right (respectively  left) coderivation.  Finally, the last claim concerning the filtrations follows from Lemmas~\ref{lem:direct-sum-gradings} and \ref{lem:coproduct-gradings}, which show that the filtrations are compatible with $\star$ and $\Delta$ respectively. The descent to the associated graded complexes is then immediate. 
\end{proof}

\begin{corollary}\label{cor:dg-algebra}
	Both $(\MC_{\bullet},\star,[\varnothing,1], \partial_{\del}^{\tot})$ and $(\MC_{\bullet},\star,[\varnothing,1], \partial_{\con}^{\tot})$ are dg-algebras. 
\end{corollary}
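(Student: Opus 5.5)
The corollary should follow formally from Theorem~\ref{thm:hopf-algebra}, so the plan is to assemble the three ingredients of a dg-algebra for each of $\partial_{\del}^{\tot}$ and $\partial_{\con}^{\tot}$: an associative unital graded algebra, a differential of degree $-1$ squaring to zero, and the graded Leibniz rule. The algebra structure is given by Lemma~\ref{lem:direct-sum-product}: $\star$ is associative with two-sided unit $[\varnothing,1]$. Lemma~\ref{lem:direct-sum-gradings}(2) shows that $\star$ is homogeneous of degree $0$ for the ground-set grading on $\MC_\bullet$, so $(\MC_\bullet,\star,[\varnothing,1])$ is a graded algebra. Next, $\partial_{\del}^{\tot}=\partial_{\del}+\partial_{\clp}$ and $\partial_{\con}^{\tot}=\partial_{\lp}+\partial_{\con}$ each lower ground-set size by one and square to zero, as recorded in \S\ref{subsec:tot-filt}.

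The only remaining point is the Leibniz rule for the total differentials. I will deduce it by linearity from Lemma~\ref{lem:direct-sum-derivation}, which gives, for homogeneous $a,b\in\MC_\bullet$ and each $\partial\in\{\partial_{\del},\partial_{\clp},\partial_{\con},\partial_{\lp}\}$,
\[
\partial(a\star b)=\partial(a)\star b+(-1)^{|a|}a\star\partial(b).
\]
All four operators have the same degree $-1$ in the ground-set grading, and the sign $(-1)^{|a|}$ depends only on $a$. Hence adding the identities for $\partial_{\del}$ and $\partial_{\clp}$ gives the same identity for $\partial_{\del}^{\tot}$, and adding those for $\partial_{\lp}$ and $\partial_{\con}$ gives it for $\partial_{\con}^{\tot}$. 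This is precisely part (3) of Theorem~\ref{thm:hopf-algebra} specialized to the total differentials.

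I do not expect a genuine obstacle. The one point to check is that the sign $(-1)^{|a|}$ in Lemma~\ref{lem:direct-sum-derivation} refers to the ground-set grading, which is the grading in which the total differentials are homogeneous. It does not refer to rank or nullity, in which $\partial_{\del}$ and $\partial_{\clp}$ have different bidegrees. The paper's proof of Lemma~\ref{lem:direct-sum-derivation} uses the sign $(-1)^{|E(\sM)|}$, so this consistency holds and the corollary follows.
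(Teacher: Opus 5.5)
Your proposal is correct and follows the paper's route. The paper's proof simply cites Theorem~\ref{thm:hopf-algebra}, and that theorem's part (3) for the total differentials is exactly the sum of the Leibniz identities from Lemma~\ref{lem:direct-sum-derivation} that you write out, combined with associativity and unitality of $\star$ and the fact that the total differential squares to zero.
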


\begin{proof}
This follows immediately from Theorem~\ref{thm:hopf-algebra}.
\end{proof}

\begin{corollary}\label{cor:structure-summary}
Let $\bP$ be a matroid property stable under isomorphism and direct sums.
\begin{enumerate}
    \item If $\bP$ is deletion \textup{(}resp.\ contraction\textup{)} closed, then $\MC^{\bP}_\bullet$ inherits the structure of a dg-algebra under $\star$, with multiplicative rank and nullity filtrations whose associated graded pieces are again dg-algebras.

    \item If $\bP$ is minor closed, then $\MC^{\bP}_\bullet$ is a filtered Hopf algebra as in Theorem~\ref{thm:hopf-algebra}, and the associated graded objects are Hopf algebras.

    \item Matroid duality exchanges the deletion-side and contraction-side structures, replacing $\bP$ by $\bP^*$.
\end{enumerate}
\end{corollary}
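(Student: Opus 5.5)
The plan is to reduce each part to facts already established for the full space $\MC_\bullet$, using that $\MC^{\bP}_\bullet$ is a subspace on which the relevant operations restrict. For part (1), suppose $\bP$ is deletion closed; the contraction case is identical after swapping the roles of the differentials. First I check that $\MC^{\bP}_\bullet$ is a subalgebra. Because $\bP$ is stable under direct sums, $[\sM,\eta]\star[\sQ,\omega]=[\sM\oplus\sQ,\eta\wedge\omega]$ again has property $\bP$. The unit $[\varnothing,1]$ also needs to lie in $\MC^{\bP}_\bullet$. I would take this to be part of the meaning of ``stable under direct sums'' (the empty direct sum). Alternatively, the unit is reached by repeated deletion from any nonempty member of $\bP$ when $\bP$ is deletion closed, provided $\bP$ is nonempty. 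By Proposition~\ref{prop:property-totalizations}, $\partial_{\del}^{\tot}$ preserves $\MC^{\bP}_\bullet$. Corollary~\ref{cor:dg-algebra} says $\partial_{\del}^{\tot}$ is a graded derivation of $\star$ on $\MC_\bullet$, and restricting that identity gives the dg-algebra structure on $\MC^{\bP}_\bullet$.

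For the filtrations in part (1), I would use Lemma~\ref{lem:direct-sum-gradings}(1): $\star$ has bidegree $(0,0)$, so $F^{\rk}_p\star F^{\rk}_q\subseteq F^{\rk}_{p+q}$, and the same holds for nullity. Both filtrations are preserved by $\partial_{\del}^{\tot}$, as recorded in Section~\ref{subsec:tot-filt}, and they restrict to $\MC^{\bP}$. The associated graded pieces therefore inherit a product and a differential, namely $\partial_{\del}$ or $\partial_{\clp}$. The Leibniz rule on these pieces is Lemma~\ref{lem:direct-sum-derivation} applied to that single differential.

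For part (2), a minor closed $\bP$ gives $\sM|S$ and $\sM/S$ both in $\bP$ whenever $\sM$ is, since both are minors. So $\Delta$ maps $\MC^{\bP}$ into $\MC^{\bP}\otimes\MC^{\bP}$. The counit restricts trivially. Together with the subalgebra property from part (1), $\MC^{\bP}_\bullet$ is a sub-bialgebra of $\MC_\bullet$. It is connected graded, because its degree-zero part is spanned by $[\varnothing,1]$. Hence it is a Hopf algebra by the standard connected graded argument, or because the antipode, given by Takeuchi's formula in $\star$ and $\Delta$, preserves the sub-bialgebra. The filtration statements then follow from Lemmas~\ref{lem:direct-sum-gradings} and \ref{lem:coproduct-gradings} exactly as in the proof of Theorem~\ref{thm:hopf-algebra}.

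For part (3), I would check that the duality map $[\sM,\eta]\mapsto[\sM^*,\eta]$ is an algebra isomorphism, which holds because $(\sM\oplus\sQ)^*=\sM^*\oplus\sQ^*$ with the same orientation $\eta\wedge\omega$. Proposition~\ref{prop:property-totalizations}(3) and Lemma~\ref{lem:tot-comp-tduality} then transport the deletion-side dg-algebra for $\bP$ to the contraction-side dg-algebra for $\bP^*$, exchanging the rank and nullity filtrations. On coproducts, $(\sM|S)^*=\sM^*/(E\setminus S)$, so duality intertwines $\Delta$ with its opposite coproduct $\tau\circ\Delta$, where $\tau$ is the swap including its Koszul sign. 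I expect this step to be the main obstacle: checking that the orientation sign conventions match under the flip. If the signs do not line up cleanly, I would state (3) only at the dg-algebra and filtration level, which is what the deletion/contraction exchange requires.
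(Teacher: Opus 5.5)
Your proposal is correct and follows the same route as the paper's proof. For (1) you restrict $\star$, the total differential, and the filtrations via Proposition~\ref{prop:property-totalizations} and Corollary~\ref{cor:dg-algebra}; for (2) you observe that $\Delta$ and $\epsilon$ restrict and reapply Theorem~\ref{thm:hopf-algebra}; for (3) you invoke duality. Along the way you supply details the paper leaves implicit: that the unit lies in $\MC^{\bP}_\bullet$, that the filtrations are multiplicative, and that $[\sM,\eta]\mapsto[\sM^*,\eta]$ respects $\star$.

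The sign issue you flag in (3) resolves cleanly. With $T=E\setminus S$ one may take $\eta|T=(-1)^{|S|\,|T|}\,\eta/S$ and $\eta/T=\eta|S$. The Koszul sign of the flip then cancels, so duality intertwines $\Delta$ exactly with the co-opposite coproduct, and your fallback is unnecessary.
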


\begin{proof}
	For part (1), stability under direct sum makes $\star$ a well-defined product on $\MC_{\bullet}^{\bP}$ and Proposition~\ref{prop:property-totalizations} shows the relevant total differential and filtrations restrict. Hence the dg-algebra statements follow from Corollary~\ref{cor:dg-algebra}. For  part (2), if $\bP$ is minor closed then $\sM|S$ and $\sM/S$ also satisfy $\bP$ for all $S \subset E(\sM)$ and so $\Delta$ and $\epsilon$ restrict to $\MC_{\bullet}^{\bP}$. Theorem~\ref{thm:hopf-algebra} now applies verbatim. Part (3) is a direct application of duality. 
\end{proof}

We now turn to showing that the algebra structure on $\MC_{\bullet}$ implies that many variants of matroid complexes are acyclic. 

\begin{theorem}\label{thm:acyclicity-summary}
      	All of the following complexes are acyclic:
	\[
		\left(\MC_{\bullet}, \partial_{\del}\right), \quad \quad \left(\MC_{\bullet}, \partial_{\del}^{\tot} \right), 
		\quad \quad \left(\MC_{\bullet}, \partial_{\con} \right), \quad \text{and} \quad \left(\MC_{\bullet}, \partial_{\con}^{\tot} \right).
	\]
\end{theorem}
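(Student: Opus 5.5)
The proof will build an explicit contracting homotopy from the derivation property in Lemma~\ref{lem:direct-sum-derivation}. The idea is to find, for each differential $\partial$ in the statement, a degree-one class $u$ with $\partial(u)=[\varnothing,1]$, and then use left multiplication by $u$ as the homotopy.

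For the deletion side, take $u\coloneqq[U_{0,1},\ell]$, where $\ell$ is the unique ground-set element and the orientation is $\ell\in\bigwedge^1\ZZ\langle\{\ell\}\rangle$. Since $\ell$ is a loop it is not a coloop, so:
\begin{itemize}
\item $\partial_{\del}(u)=[U_{0,1}\setminus\ell,\iota_\ell(\ell)]=[\varnothing,1]$;
\item $\partial_{\clp}(u)=0$, because $U_{0,1}$ has no coloops;
\item hence $\partial^{\tot}_{\del}(u)=[\varnothing,1]$ as well.
\end{itemize}

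For the contraction side, take $v\coloneqq[U_{1,1},x]$. Its element is a coloop and not a loop, so:
\begin{itemize}
\item $\partial_{\con}(v)=[\varnothing,1]$;
\item $\partial_{\lp}(v)=0$;
\item hence $\partial^{\tot}_{\con}(v)=[\varnothing,1]$.
\end{itemize}
Alternatively, the contraction cases follow from the deletion cases by Proposition~\ref{prop:duality} and Lemma~\ref{lem:tot-comp-tduality}, since $U_{0,1}^*=U_{1,1}$.

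Next, define $h\colon\MC_\bullet\to\MC_{\bullet+1}$ by $h(a)\coloneqq u\star a$ (respectively $v\star a$). Lemma~\ref{lem:direct-sum-derivation} says each of the four basic differentials is a graded derivation of $\star$. A sum of two degree $-1$ graded derivations is again a graded derivation, so the same holds for $\partial^{\tot}_{\del}=\partial_{\del}+\partial_{\clp}$ and $\partial^{\tot}_{\con}=\partial_{\lp}+\partial_{\con}$. This is just Theorem~\ref{thm:hopf-algebra}(3) and Corollary~\ref{cor:dg-algebra}. With $|u|=1$, the Leibniz rule gives, for homogeneous $a$,
\[
\partial(u\star a)=\partial(u)\star a-u\star\partial(a)=a-u\star\partial(a),
\]
where the last step uses that $[\varnothing,1]$ is the unit (Lemma~\ref{lem:direct-sum-product}). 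Therefore $\partial h+h\partial=\id$ on $\MC_\bullet$, and the identity is null-homotopic.

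It follows that every cycle is a boundary: if $\partial a=0$ then $a=\partial(u\star a)$. This holds in degree $0$ too, since $[\varnothing,1]=\partial(u)$. So all homology vanishes, including the unit class, and the four complexes are acyclic.

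The only place I expect any friction is the sign and orientation bookkeeping in the base computation $\partial(u)=[\varnothing,1]$. Concretely, one must check that $\iota_\ell(\ell)=1$ is exactly the canonical orientation of $\varnothing$, with no stray sign. One must also confirm that the Koszul sign $(-1)^{|u|}=-1$ in Lemma~\ref{lem:direct-sum-derivation} produces $\partial h+h\partial=\id$ rather than $\partial h-h\partial$. Both are direct checks from the definitions: the second is precisely the displayed identity above. Everything else is formal.
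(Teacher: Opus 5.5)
Your proposal is correct and is essentially the paper's argument: Theorem~\ref{thm:acyclicity-summary} is deduced from Corollary~\ref{cor:acyclicity-summary} (with trivial $\bP$) by taking $[U_{0,1},\eta]$ on the deletion side and $[U_{1,1},\omega]$ on the contraction side, and then applying the contracting homotopy of Lemma~\ref{lem:dg-acyclic}. The only differences are cosmetic. You multiply by $u$ on the left, which avoids the $(-1)^n$ sign the paper needs for its right-multiplication homotopy $c\mapsto(-1)^n c\star\ell$. You also note that the contraction cases follow by duality, which the paper handles by the same direct computation with $U_{1,1}$.
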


Before proving this, we recall the following basic lemma, certainly known to experts. 

\begin{lemma}\label{lem:dg-acyclic}
	Let $(C_{\bullet}, \partial, \star, 1)$ be a unital dg algebra. If there exists an element $\ell \in C_{1}$ such that $\partial(\ell) = 1$ then multiplication by $\ell$ gives a contracting homotopy; thus, $(C_{\bullet}, \partial)$ is acyclic.
\end{lemma}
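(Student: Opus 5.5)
The plan is to define the homotopy $h\colon C_\bullet\to C_{\bullet+1}$ by left multiplication, $h(x)\coloneqq \ell\star x$, and to verify directly that $\partial h+h\partial=\id$. Once this identity holds, every cycle $x$ satisfies $x=\partial(\ell\star x)$. Hence every cycle is a boundary and $H_\bullet(C_\bullet,\partial)=0$ in all degrees.

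The single computation is the graded Leibniz rule for a homogeneous $x$, using that $\ell$ has degree $1$:
\[
\partial(\ell\star x)=\partial(\ell)\star x+(-1)^{|\ell|}\ell\star\partial(x)=1\star x-\ell\star\partial(x).
\]
Rearranging gives $\partial(h(x))+h(\partial(x))=x$, using that $1$ is a two-sided unit. By linearity this extends from homogeneous elements to all of $C_\bullet$. One should also note that $h$ raises degree by one, which is exactly the degree a chain homotopy for a degree $-1$ differential must have.

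A point to check is that the sign works out: because $|\ell|=1$ is odd, the term $\ell\star\partial x$ appears with a minus sign, and this is precisely what makes $h\partial$ cancel against it. Had $\ell$ been of even degree the argument would fail, so this is the only potential obstacle. The identity $\id=\partial h+h\partial$ shows that $\id$ is chain homotopic to $0$, so $\id$ induces the zero map on homology.

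One caveat concerns degree $0$. The unit itself satisfies $1=\partial(\ell)$, so it is a boundary as well. The statement therefore gives honest acyclicity in every degree, including $0$, with no augmentation needed.
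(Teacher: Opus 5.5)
Your proof is correct and is essentially the paper's argument: a contracting homotopy given by multiplication by $\ell$, verified via the graded Leibniz rule. The only difference is that the paper multiplies on the right with a sign twist, $h_n(c)=(-1)^n\,c\star\ell$, while your left multiplication $h(x)=\ell\star x$ absorbs that sign automatically. In the graded-commutative algebra $\MC_\bullet$ the two homotopies coincide.
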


\begin{proof}
	For an integer $n$, define $h_{n}:C_{n}\to C_{n+1}$ by $c \mapsto (-1)^{n}(c\star \ell)$. Using that $\partial$ is a graded derivation for $\star$ we have that 
    \begin{align*}
        \partial\left(h_{n}\left(c\right)\right) &= (-1)^{n} \partial\left( c \star \ell\right) = (-1)^{n}\partial\left(c\right)\star \ell + (-1)^{2n}c\star \partial(\ell) = (-1)^{n}\partial\left(c\right)\star \ell + c\star 1 
        \\
        &= (-1)^{n}\partial\left(c\right)\star \ell + c = -h_{n-1}\left(\partial\left(c\right)\right) + c
   \end{align*}
    Rearranging gives that $\partial\circ h_{n}+h_{n-1}\circ \partial$ is equal to the identity on $C_{n}$ as claimed. 
\end{proof}

The proof of Theorem~\ref{thm:acyclicity-summary} follows from the following corollary taking $\bP$ to be the trivial property of being a matroid. 

\begin{corollary}\label{cor:acyclicity-summary}
Let $\bP$ be a matroid property stable under isomorphism.
\begin{enumerate}
\item If $\bP$ is deletion closed and closed under direct sum with $U_{0,1}$, the following complexes are acyclic
\[
\left(\MC^{\bP}_\bullet,\partial^{\tot}_{\del}\right),\qquad
\left(\MC^{\bP}_\bullet,\partial_{\del}\right),\qquad \text{and} \qquad
\left(\MC^{\bP}_{\bullet,r},\partial_{\del}\right)\ \text{for all } r\ge 0.
\]

\item If $\bP$ is contraction closed and closed under direct sum with $U_{0,1}$, the following complexes are acyclic
\[
\left(\MC^{\bP}_\bullet,\partial^{\tot}_{\con}\right),\qquad
\left(\MC^{\bP}_\bullet,\partial_{\lp}\right),\qquad \text{and} \qquad
\left(\MC^{\bP}_{\bullet,r},\partial_{\lp}\right)\ \text{for all } r\ge 0.
\]

\item If $\bP$ is deletion closed and closed under direct sum with $U_{1,1}$, the following complexes are acyclic
\[
\left(\MC^{\bP}_\bullet,\partial^{\tot}_{\del}\right),\qquad
\left(\MC^{\bP}_\bullet,\partial_{\clp}\right),\qquad \text{and} \qquad
\left(\MC^{\bP}_{k,\bullet},\partial_{\clp}\right)\ \text{for all } k\ge 0.
\]

\item If $\bP$ is contraction closed and closed under direct sum with $U_{1,1}$, the following complexes are acyclic
\[
\left(\MC^{\bP}_\bullet,\partial^{\tot}_{\con}\right),\qquad
\left(\MC^{\bP}_\bullet,\partial_{\con}\right),\qquad \text{and} \qquad
\left(\MC^{\bP}_{k,\bullet},\partial_{\con}\right)\ \text{for all } k\ge 0.
\]
\end{enumerate}
\end{corollary}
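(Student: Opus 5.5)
The plan is to apply Lemma~\ref{lem:dg-acyclic} in each case, with $\ell=[U_{0,1},\ell]$ (a loop) in parts (1) and (2), and $\ell=[U_{1,1},c]$ (a coloop) in parts (3) and (4). Here the orientation is the single ground-set element. For this we need three ingredients on the subspace $\MC^{\bP}_\bullet$.

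First, right multiplication by $\ell$ must preserve $\MC^{\bP}_\bullet$. Lemma~\ref{lem:dg-acyclic} only uses the maps $h_n(c)=(-1)^n c\star\ell$, not a full algebra structure. So it suffices that $\bP$ is closed under $\oplus U_{0,1}$ (resp.\ $\oplus U_{1,1}$), and that the differential restricts to $\MC^{\bP}_\bullet$, which is Proposition~\ref{prop:property-totalizations}. Note that $[\varnothing,1]$ lies in $\MC^{\bP}$, since it is obtained by deleting (resp.\ contracting) the loop or coloop; the degenerate case where $\bP$ holds for nothing is vacuous.

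Second, we need the Leibniz rule for the relevant differential with respect to $\star$ with one factor equal to $\ell$. This is Lemma~\ref{lem:direct-sum-derivation}, which applies to each of $\partial_{\del},\partial_{\clp},\partial_{\con},\partial_{\lp}$ and hence to their sums $\partial^{\tot}_{\del}$ and $\partial^{\tot}_{\con}$. The proof of Lemma~\ref{lem:dg-acyclic} goes through verbatim, since it only uses $\partial(c\star\ell)=\partial c\star\ell+(-1)^{|c|}c\star\partial\ell$.

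Third, we compute $\partial(\ell)$:
\begin{itemize}
\item In $U_{0,1}$ the element is a loop, hence not a coloop. So $\partial_{\del}[U_{0,1},\ell]=[\varnothing,\iota_\ell\ell]=[\varnothing,1]$, $\partial_{\clp}[U_{0,1},\ell]=0$, and thus $\partial^{\tot}_{\del}\ell=1$.
\item Dually, $\partial_{\lp}[U_{0,1},\ell]=1$, $\partial_{\con}[U_{0,1},\ell]=0$, and $\partial^{\tot}_{\con}\ell=1$.
\item For $U_{1,1}$ the element is a coloop and not a loop. So $\partial_{\clp}=\partial^{\tot}_{\del}=1$ and $\partial_{\con}=\partial^{\tot}_{\con}=1$ on it, while $\partial_{\del}$ and $\partial_{\lp}$ vanish on it.
\end{itemize}
This gives acyclicity of the total and single-differential complexes listed in each part.

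For the fixed-rank and fixed-nullity statements, we use bigradings. Multiplication by $[U_{0,1},\ell]$ has bidegree $(1,0)$, so it preserves rank, and the relevant differentials $\partial_{\del}$ and $\partial_{\lp}$ also preserve rank. Hence the homotopy $h$ restricts to each $\MC^{\bP}_{\bullet,r}$, and the identity $\partial h+h\partial=\id$ holds there. Likewise, $[U_{1,1},c]$ has bidegree $(0,1)$, which preserves nullity, as do $\partial_{\clp}$ and $\partial_{\con}$; so we get acyclicity of $\MC^{\bP}_{k,\bullet}$.

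Finally, Theorem~\ref{thm:acyclicity-summary} follows by taking $\bP$ trivial: part (1) gives $\partial_{\del}$, part (4) gives $\partial_{\con}$, and either gives the total complexes. The main obstacle is only bookkeeping. We must check that $h$ lands in the $\bP$-subspace, without assuming $\bP$ is closed under general direct sums, and that signs match the Koszul convention in Lemma~\ref{lem:dg-acyclic}. Both reduce to the single-factor Leibniz identity already proven.
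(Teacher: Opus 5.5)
Your proposal is correct and follows the paper's proof: both use the contracting homotopy of Lemma~\ref{lem:dg-acyclic}, given by right multiplication by $[U_{0,1},\ell]$ or $[U_{1,1},c]$, together with the Leibniz rule of Lemma~\ref{lem:direct-sum-derivation}, closure of $\bP$ under the relevant one-element direct sum, and bidegree bookkeeping for the fixed-rank and fixed-nullity pieces. Your side remark that $[\varnothing,1]\in\MC^{\bP}_\bullet$ is harmless but unnecessary: the identity $\partial h+h\partial=\id$ already holds on all of $\MC_\bullet$, so it only requires $h$ and $\partial$ to preserve the $\bP$-subspace.
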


The proof is driven by two elementary identities: adjoining a loop $U_{0,1}$ gives a degree-$1$ class whose boundary is the unit for the differentials $\partial_{\del}$, $\partial_{\del}^{\tot}$, $\partial_{\lp}$, and $\partial_{\con}^{\tot}$, while adjoining a coloop $U_{1,1}$ does the same for $\partial_{\clp}$, $\partial_{\del}^{\tot}$, $\partial_{\con}$, and $\partial_{\con}^{\tot}$. Every acyclicity statement in the corollary is then the same contracting-homotopy argument once one checks that the relevant direct sum preserves the stated subcomplex.

\begin{proof}
For part (1), let $E(U_{0,1})=\{\ell\}$ and fix an orientation $\eta$ on $U_{0,1}$. With appropriate choice of $\eta$ a computation shows that 
\[
\partial_{\del}\left([U_{0,1},\eta]\right)=\partial^{\tot}_{\del}\left([U_{0,1},\eta]\right)=[\varnothing,1]=1_{\star}
\quad \quad \text{and} \quad \quad 
\partial_{\lp}\left([U_{0,1},\eta]\right)=\partial^{\tot}_{\con}\left([U_{0,1},\eta]\right)=[\varnothing,1]=1_{\star}.
\]
Since $U_{0,1}$ has rank $0$ and nullity $1$, direct sum with $[U_{0,1},\eta]$ preserves the complexes
\[
(\MC_\bullet,\partial_{\del}^{\tot}),\qquad
(\MC_\bullet,\partial_{\del}),\qquad
(\MC_{\bullet,r},\partial_{\del}),\qquad
(\MC_\bullet,\partial_{\con}^{\tot}),\qquad
(\MC_\bullet,\partial_{\lp}),\qquad
(\MC_{\bullet,r},\partial_{\lp}),
\]
and, by the assumption on $\bP$, it also preserves the corresponding $\bP$-subcomplexes listed in the statement. Since the relevant differentials are graded derivations for $\star$ the same contracting-homotopy calculation as in Lemma~\ref{lem:dg-acyclic}, with $[U_{0,1},\eta]$, restricts to each of the above $\bP$-subcomplexes. This proves all the acyclicity claims involving $U_{0,1}$. The same argument applied to $[U_{1,1},\omega]$  with $\omega$ an appropriate choice of orientation gives the contracting homotopy in the remaining cases.
\end{proof}

\begin{corollary}
After tensoring with $\QQ$, the deletion and contraction complexes of \cite{ABGW00} have one-dimensional homology in degree $1$ and vanishing homology in all higher degrees.
\end{corollary}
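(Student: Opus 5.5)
The plan is to compare the \cite{ABGW00} complexes, after tensoring with $\QQ$, with the positive-degree truncations of $(\MC_\bullet,\partial_{\del})$ and $(\MC_\bullet,\partial_{\con})$, as explained in Section~\ref{sec:comparison}. Under the identification $(\sM,E,\le)\mapsto(-1)^n[\sM,\omega_E]$ for $n\ge1$, their complex in degrees $n\ge 1$ is exactly $\MC_n$ with differential $\partial_{\del}$ (respectively $\partial_{\con}$), and it is zero in degree $0$. Hence, writing $C_\bullet$ for the truncation $C_n=\MC_n$ for $n\ge1$ and $C_0=0$, we have $H_n(C)=H_n(\MC_\bullet,\partial)$ for all $n\ge2$. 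Theorem~\ref{thm:acyclicity-summary} then gives vanishing in all degrees $n\ge 2$.

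It remains to compute degree $1$. Here $H_1(C)=\MC_1$ modulo the image of $\MC_2$, since the outgoing differential has been truncated to zero. By acyclicity of the untruncated complex, the image of $\partial\colon\MC_2\to\MC_1$ equals the kernel of $\partial\colon\MC_1\to\MC_0$. So $H_1(C)\cong \MC_1/\ker(\partial_1)\cong \img(\partial_1)\subseteq \MC_0\cong\QQ$. The map is surjective: for $\partial_{\del}$ the proof of Corollary~\ref{cor:acyclicity-summary} gives $\partial_{\del}[U_{0,1},\eta]=[\varnothing,1]$, and for $\partial_{\con}$ we have $\partial_{\con}[U_{1,1},\omega]=[\varnothing,1]$. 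Hence $H_1(C)\cong\QQ$. Concretely, $\MC_1$ is two-dimensional, spanned by a loop and a coloop, and only one of the two is killed by the degree-$0$ differential, which agrees with Figure~\ref{fig:data1}.

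The only point needing care is the bookkeeping: the sign normalization $(-1)^n$ and the convention that \cite{ABGW00} set their degree-$0$ group to zero. Both were already addressed in Section~\ref{sec:comparison}. With that identification in hand, the argument is formal, and I expect no real obstacle beyond stating the truncation relation carefully.
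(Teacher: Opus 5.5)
Your proposal is correct and follows the paper's proof essentially verbatim. Truncation leaves $H_n$ unchanged for $n\ge 2$, so Theorem~\ref{thm:acyclicity-summary} kills it, and in degree $1$ acyclicity gives $H_1\cong C_1/\ker(\partial_1)\cong\QQ$; the only difference is that you get surjectivity of $\partial_1$ from the explicit classes $[U_{0,1},\eta]$ and $[U_{1,1},\omega]$, whereas the paper deduces it from $H_0=0$ for the unreduced complex.
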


\begin{proof}
For degrees $n\ge 2$, the positive-degree truncation agrees with the unreduced complex, so the homology vanishes by Theorem~\ref{thm:acyclicity-summary}. In degree $1$, the truncation has zero differential out of $C_1$, hence
\[
H_1 \cong C_1/\img(\partial_2).
\]
For the unreduced complex, acyclicity gives $\img(\partial_2)=\ker(\partial_1)$, and $H_0=0$ implies $\partial_1:C_1\to C_0\cong \QQ$ is surjective. Therefore
\[
H_1 \cong C_1/\ker(\partial_1)\cong C_0\cong \QQ.
\]
The explicit generators are obtained from the degree-$1$ calculation.
\end{proof}

\subsection{Connected Generators and Connected Quotients}
By Theorem~\ref{thm:hopf-algebra}, $\MC_{\bullet}$ is a connected graded Hopf algebra in the standard algebraic sense that its degree-zero piece is $\QQ\langle[\varnothing,1]\rangle$. It would be interesting to try to determine generators for $\MC_{\bullet}$ as a Hopf algebra. In this section we approach the slightly different question of generators of $\MC_{\bullet}$ as an algebra with respect to $\star$. Towards this, the matroids that are indecomposable with respect to direct sum should play the role of multiplicative atoms, just as connected graphs do in graph complexes built from disjoint union. The next theorem makes this precise.

A matroid is said to be \emph{connected} if and only if it cannot be written as the direct sum of two nonempty matroids. Note we do not regard the empty matroid as connected. Connectedness of matroids is preserved under both isomorphism and duality. Every nonempty matroid can be written as the direct sum of connected nonempty matroids; moreover, this decomposition is unique up to isomorphism and order. Given a positive integer $n$ let $V_{n} \subset \MC_{n}$ denote the span of classes $[\sM,\eta]$ such that $\sM$ is connected. Consider the following $\ZZ$-graded $\QQ$-vector spaces: 
\[
V_{\bullet} \coloneqq \bigoplus_{n\in\ZZ} V_{n}, \quad \quad V_{\mathrm{even}} \coloneqq \bigoplus_{k\in \ZZ} V_{2k}, \quad \text{and} \quad V_{\mathrm{odd}} \coloneqq \bigoplus_{k\in \ZZ} V_{2k+1}
\]
By definition there is an inclusion $V_{\bullet} \hookrightarrow \MC_{\bullet}$ of $\ZZ$-graded vector spaces. By Lemma~\ref{lem:direct-sum-gradings}, $(\MC_{\bullet},\star)$ is graded-commutative, which implies the inclusion $V_{\bullet} \hookrightarrow \MC_{\bullet}$ extends uniquely into a graded algebra morphism:
\[
	\begin{tikzcd}[row sep = .5em, column sep = 3.2em]
	\displaystyle \Sym\left(V_{\mathrm{even}}\right) \otimes \bigwedge V_{\mathrm{odd}} \arrow[r, "\Psi"] & \left(\MC_{\bullet}, \star\right) \\
	\end{tikzcd}
\]
Because the total grading controls the Koszul signs, even-degree connected classes should behave polynomially and odd-degree connected classes should behave exteriorly. Thus the real content of the next theorem is not merely that connected matroids generate $\MC_{\bullet}$---that already follows from the connected-component decomposition of matroids---but that the only multiplicative relations are the ones forced by graded-commutativity.

\begin{theorem}\label{thm:connected-generators}
	The morphism $\Psi$ is an isomorphism of graded algebras. Thus, $(\MC_{\bullet},\star)$ is the free super-commutative algebra on its connected classes.
\end{theorem}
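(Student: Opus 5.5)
The plan is to compare bases on both sides, organized by multisets of isomorphism types of connected matroids. First I will describe a basis of $\MC_{\bullet}$. For each isomorphism class of matroid $\sM$, the space spanned by its classes $[\sM,\eta]$ is one-dimensional or zero. It is zero exactly when some automorphism of $\sM$ induces an odd permutation of $E(\sM)$, so that $\psi_*\eta=-\eta$. Hence $\MC_{\bullet}$ has a basis consisting of one class $[\sM,\eta_{\sM}]$ for each \emph{alternating} isomorphism class, meaning one with no odd automorphism. The same holds for $V_{\bullet}$, restricted to connected classes.

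On the source side, $\Sym(V_{\mathrm{even}})\otimes\bigwedge V_{\mathrm{odd}}$ has a basis of monomials. Each monomial is a product of basis vectors $[\sM_i,\eta_{\sM_i}]$ for connected alternating $\sM_i$. Even-size factors may repeat with arbitrary multiplicity; odd-size factors appear at most once. Such a monomial is sent by $\Psi$ to $\pm[\sM_1\oplus\cdots\oplus\sM_m,\eta_1\wedge\cdots\wedge\eta_m]$. By uniqueness of the decomposition into connected components, different monomials (as multisets of connected isomorphism types) map to different isomorphism classes of matroids. It therefore suffices to prove the following key claim. A matroid $\sM=\bigoplus_j \sN_j^{\oplus a_j}$, with the $\sN_j$ pairwise non-isomorphic and connected, is alternating if and only if each $\sN_j$ is alternating and $a_j\le 1$ whenever $|E(\sN_j)|$ is odd. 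Granting the claim, $\Psi$ sends the monomial basis bijectively, up to sign, onto the basis of $\MC_{\bullet}$. That gives both surjectivity and injectivity, since every nonempty matroid decomposes into connected components.

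For the claim, I will use the standard fact that the components of a matroid are intrinsically determined: they are the equivalence classes of the relation ``lie on a common circuit''. Consequently every automorphism of $\sM$ permutes the components, sending each component to an isomorphic one. So $\Aut(\sM)\cong\prod_j \Aut(\sN_j)\wr S_{a_j}$, acting on $E(\sM)$ in the evident way. Now I compute signs of permutations of $E(\sM)$.
\begin{itemize}
\item An automorphism acting on a single copy of $\sN_j$ has the same sign as it has on $E(\sN_j)$.
\item Swapping two copies of $\sN_j$ via a fixed identification is a product of $|E(\sN_j)|$ disjoint transpositions, so its sign is $(-1)^{|E(\sN_j)|}$.
\end{itemize}
These elements generate $\Aut(\sM)$. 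Hence $\Aut(\sM)$ contains an odd permutation if and only if some $\Aut(\sN_j)$ does, or some odd-size $\sN_j$ occurs with multiplicity at least $2$. This proves the claim. On the algebra side, it matches the relations $x\star x=0$ for odd $x$ (graded-commutativity, Lemma~\ref{lem:direct-sum-gradings}(3)) and $[\sN,\eta]=0$ for non-alternating $\sN$.

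The main obstacle is justifying that automorphisms permute connected components. This is the one step that is not formal bookkeeping. I would cite or prove directly that the connected components are the classes of the circuit relation, which isomorphisms preserve. A secondary point of care is the sign comparison: the sign relating $\Psi$ of a monomial to the chosen basis class must be $\pm1$, and in particular nonzero. This holds because the product of orientations $\eta_1\wedge\cdots\wedge\eta_m$ is primitive, so the image is exactly $\pm$ the chosen basis vector.
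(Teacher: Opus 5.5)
Your proof is correct, but it takes a different route from the paper's. The paper never describes a basis of $\MC_\bullet$. Instead it defines $\Psi^{-1}([\sM,\eta])=[\sM_1,\eta_1]\cdots[\sM_t,\eta_t]$ from the component decomposition and a splitting $\eta=\eta_1\wedge\cdots\wedge\eta_t$. It checks that this is independent of the ordering of components, of the splitting, and of the representative in the isomorphism class, and then verifies that both composites are the identity. In that approach the sign bookkeeping is absorbed by the Koszul rule of $\Sym(V_{\mathrm{even}})\otimes\bigwedge V_{\mathrm{odd}}$. For example, an automorphism swapping two copies of an odd-size component sends $\eta$ to $-\eta$, and this is matched automatically by $x\star x=0$ for odd $x$.

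You instead work with the alternating-class basis of $\MC_\bullet$, which is the content of Lemma~\ref{lem:orientation-reversing-zero}, proved only later in the paper. You also implicitly use that classes of non-isomorphic matroids are independent; this holds because the relations in $\widetilde{\cR}$ respect the decomposition of $\widetilde{\cM}$ by isomorphism type. Surjectivity is then immediate. Injectivity reduces to your explicit criterion for when $\bigoplus_j\mathsf{N}_j^{\oplus a_j}$ has no odd automorphism, which you prove via $\Aut(\sM)\cong\prod_j\Aut(\mathsf{N}_j)\wr S_{a_j}$ and the signs of its generators.

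Both arguments rest on the same non-formal fact: isomorphisms carry connected components to connected components. The paper uses this silently in its step (iii) (``one checks''), while you isolate it and justify it through the circuit relation. The paper's route is shorter and never needs to know which classes vanish. Yours costs the basis lemma and the wreath-product description, but it turns the theorem into a transparent combinatorial statement in which the role of repeated odd-size components is explicit.
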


\begin{proof}
	We explicitly construct an inverse to $\Psi$ on generators. Set $\Psi^{-1}([\varnothing,1])=1$. If $n>0$, given a class $[\sM,\eta] \in \MC_{n}$ let $\sM=\sM_{1}\oplus \sM_{2}\oplus \cdots \oplus \sM_{t}$ be a decomposition of $\sM$ into its connected components. We may choose orientations $\eta_{1},\ldots,\eta_{t}$ on the matroids $\sM_1,\ldots,\sM_{t}$ such that $\eta = \eta_{1} \wedge \cdots \wedge \eta_{t}$. Define $\Psi^{-1}([\sM,\eta])$ by
	\[
	\Psi^{-1}\left(\left[\sM,\eta\right]\right) \coloneqq \left[\sM_{1},\eta_{1} \right] \cdots \left[\sM_{t},\eta_{t}\right] \in \Sym\left(V_{\mathrm{even}}\right)\otimes\bigwedge V_{\mathrm{odd}}.
	\]
	Checking this is well-defined requires three steps: (i) The definition of $\Psi^{-1}$ is independent of the order of the connected components. This is true as changing the order of the connected components would introduce the same Koszul sign on both $\eta_{1}\wedge\cdots\wedge\eta_{t}$ and on $[\sM_{1},\eta_{1}]\cdots[\sM_{t},\eta_{t}]$. (ii) Choosing different orientations on the $\sM_{i}$ whose wedge is still equal to $\eta$ does not change $\Psi^{-1}$. This changes the product by signs that will cancel out. (iii) If $\phi:\sM\to \sM'$ is an isomorphism then $\Psi^{-1}([\sM,\eta]) = \Psi^{-1}([\sM',\phi_{*}(\eta)])$. One checks that $\phi$ will induce an isomorphism between the connected components compatible with orientations. 
	
	This is an algebra map with respect to the direct sum product $\star$ because the connected components of $\sM\oplus \sQ$ are precisely the connected components of $\sM$ together with the connected components of $\sQ$. Finally, one checks directly that $\Psi \circ \Psi^{-1}$ is the identity on $\MC_{\bullet}$ and since $\Psi^{-1} \circ \Psi$ is a graded algebra endomorphism of $\Sym(V_{\mathrm{even}})\otimes \bigwedge(V_{\mathrm{odd}})$,  which is the identity on the generators of $V_{\bullet}$ it is also the identity. Thus, $\Psi$ is an isomorphism.
\end{proof}

\begin{remark}
Theorem~\ref{thm:connected-generators} does not identify generators for $\MC_{\bullet}$ as a Hopf algebra. Indeed, if $\sM$ is connected then
\[
\Delta([\sM,\eta])=
[\varnothing,1]\otimes[\sM,\eta]
+[\sM,\eta]\otimes[\varnothing,1]
+\sum_{\varnothing\subsetneq S\subsetneq E(\sM)}
[\sM|S,\eta|S]\otimes[\sM/S,\eta/S],
\]
and so in general connected classes are typically far from primitive. Thus the theorem isolates the multiplicative structure of \(\MC_{\bullet}\), while the restriction-contraction coproduct remains genuinely nontrivial on connected generators.
\end{remark}

Theorem~\ref{thm:connected-generators} suggests passing from the full matroid complexes to a connected quotient. Since every disconnected matroid is a direct sum of smaller nonempty connected matroids, the disconnected part should be viewed as the decomposable part of the algebra. It is therefore natural to ask when the deletion and contraction differentials descend to the quotient by disconnected classes. This does not hold uniformly in all settings: Theorem~\ref{thm:acyclicity-summary} shows that in the presence of a loop or coloop the full complexes are often contractible for formal algebraic reasons. The next lemma shows that after excluding these one-element obstructions --- loopless on the deletion side and coloopless on the contraction side --- the disconnected classes do form a subcomplex.

If $\bP$ is a matroid property stable under isomorphism, let $\MC^{\bP,\disc}_{\bullet}$ denote the subspace of $\MC_{\bullet}^{\bP}$ spanned by classes $[\sM,\eta]$ where $\sM$ is a disconnected matroid. 

\begin{lemma}\label{lem:connected-sub}
\begin{enumerate}
	\item If $\bP$ is deletion closed and every matroid satisfying $\bP$ is loopless then $(\MC_{\bullet}^{\bP,\disc}, \partial_{\del})$ is a subcomplex of $(\MC_{\bullet}, \partial_{\del})$.
	\item If $\bP$ is contraction closed and every matroid satisfying $\bP$ is coloopless then $(\MC_{\bullet}^{\bP,\disc}, \partial_{\con})$ is a subcomplex of $(\MC_{\bullet}, \partial_{\con})$.
   \end{enumerate}
\end{lemma}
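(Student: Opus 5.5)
The plan is to prove (1) directly and deduce (2) by duality. For (1), take a generator $[\sM,\eta]$ of $\MC^{\bP,\disc}_\bullet$, so $\sM$ satisfies $\bP$ and $\sM=\sM_1\oplus\sM_2$ with both summands nonempty. Since $\bP$ is deletion closed, every term $[\sM\setminus x,\eta\setminus x]$ of $\partial_{\del}([\sM,\eta])$ again satisfies $\bP$. It therefore suffices to show that each such $\sM\setminus x$ (with $x$ not a coloop) is disconnected, or at least that any connected term cancels or vanishes.

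Write $x\in E(\sM_1)$ without loss of generality. Then $\sM\setminus x=(\sM_1\setminus x)\oplus\sM_2$, as used in the proof of Lemma~\ref{lem:direct-sum-derivation}. This is disconnected as long as $E(\sM_1)\setminus\{x\}\neq\varnothing$. The only problematic case is $|E(\sM_1)|=1$, so that $\sM_1$ is a one-element matroid, either $U_{0,1}$ or $U_{1,1}$. Looplessness rules out $U_{0,1}$. If $\sM_1=U_{1,1}$, then $x$ is a coloop, and coloops are excluded from the sum defining $\partial_{\del}$. Hence every surviving term is $[\sM\setminus x,\eta\setminus x]$ with $\sM\setminus x$ disconnected, and it satisfies $\bP$. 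So $\partial_{\del}$ maps $\MC^{\bP,\disc}_\bullet$ into itself, and $\partial_{\del}^2=0$ is inherited from Lemma~\ref{lem:horizontal-diffs}.

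Part (2) is the dual argument. If $\sM=\sM_1\oplus\sM_2$ and $x\in E(\sM_1)$ is not a loop, then $\sM/x=(\sM_1/x)\oplus\sM_2$. The only bad case is $E(\sM_1)=\{x\}$: there $\sM_1=U_{0,1}$ is excluded because $x$ is not a loop, and $\sM_1=U_{1,1}$ is excluded because $x$ would be a coloop of $\sM$. Alternatively, (2) follows by applying (1) to $\bP^*$ via Proposition~\ref{prop:duality}, using that connectedness is preserved by duality and that $\bP^*$ is deletion closed and loopless.

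The only real obstacle is spotting the singleton-component edge case; it is exactly where the loopless (respectively coloopless) hypothesis enters.
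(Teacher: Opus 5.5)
Your proposal is correct and matches the paper's proof: you reduce to showing that $\sM\setminus x$ stays disconnected, write $\sM\setminus x=(\sM_1\setminus x)\oplus\sM_2$, and use looplessness and the exclusion of coloops to rule out a singleton component $\sM_1$. The paper obtains (2) by duality, as in your alternative; your direct dual argument for (2) is also valid.
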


\begin{proof}
	Once again claim (2) follows from (1) by duality. It is enough to show that $\partial_{\del}(\MC_{n}^{\bP,\disc})$ is contained in $\MC_{n-1}^{\bP,\disc}$. This amounts to the claim that if $\sM$ is disconnected and  $x \in E(\sM)$ is not a coloop then $\sM\setminus x$ is also disconnected. To see this, suppose $\sM=\sM_{1}\oplus \sM_{2}$ with both $\sM_{1}$ and $\sM_{2}$ nonempty. Given $x\in E(\sM_1)$ we have that $\sM\setminus x = (\sM_{1}\setminus x)\oplus \sM_{2}$. Since every matroid with $\bP$ is loopless, the only one-element connected component that could disappear under deletion by $x$ is $U_{1,1}$. However, this would mean $\sM_1=U_{1,1}$ and $E(\sM_1)=\{x\}$ implying $x$ is a coloop. Hence $\sM_{1}\setminus x$ is nonempty and $\sM\setminus x$ remains the direct sum of two nonempty matroids so it is disconnected. 
\end{proof}

The upshot from Lemma~\ref{lem:connected-sub} is that we may quotient the ``full'' matroid complexes by the disconnected subcomplexes to form the connected deletion/contraction matroid complexes:
\[
\left( \MC_{\bullet}^{\bP,\cont}, \partial_{\del}\right) \quad \quad \text{and} \quad \quad \left( \MC_{\bullet}^{\bP,\cont}, \partial_{\con}\right)  \quad \quad \text{where} \quad \quad \MC_{\bullet}^{\bP,\cont} \coloneqq \MC_{\bullet}^{\bP}/\MC^{\bP,\disc}_{\bullet}.
\] 

Recall that if $A=A_{0}\oplus A_{+}$ is a graded algebra with product $\star$, then its indecomposable quotient is $A_{+}/(A_{+}\star A_{+})$, which measures the part of $A$ not generated by products of positive-degree elements. In the present setting the product is $\star$, so it is natural to compare the connected quotient of $\MC_{\bullet}^{\bP}$ with the indecomposable quotient of the graded algebra $(\MC_{\bullet}^{\bP},\star)$.

\begin{corollary}\label{cor:connected-indecomposables}
If $\bP$ is stable under isomorphism, closed under direct sums, and is inherited by connected components then $\MC^{\bP,\disc}_{\bullet} = \MC_{+}^{\bP} \star \MC_{+}^{\bP}$. Moreover, whenever Lemma~\ref{lem:connected-sub} applies, the connected quotient identifies with the indecomposable quotient
\[
\MC^{\bP,\cont}_{\bullet}
\cong
\MC^{\bP}_{+}/(\MC^{\bP}_{+}\star \MC^{\bP}_{+}).
\]
\end{corollary}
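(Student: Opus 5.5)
The plan is to prove the equality $\MC^{\bP,\disc}_{\bullet} = \MC^{\bP}_{+}\star\MC^{\bP}_{+}$ by double inclusion. From it, the identification of quotients follows formally.

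For the inclusion $\supseteq$, I would take spanning products $[\sM,\eta]\star[\sQ,\omega]$ with $\sM,\sQ$ nonempty and having $\bP$. Such a product equals $[\sM\oplus\sQ,\eta\wedge\omega]$. This matroid has $\bP$ because $\bP$ is closed under direct sums, and it is disconnected by definition, since both summands are nonempty. So every spanning element of $\MC^{\bP}_+\star\MC^{\bP}_+$ lies in $\MC^{\bP,\disc}_\bullet$, and by bilinearity of $\star$ (Lemma~\ref{lem:direct-sum-product}) the whole span does.

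For the inclusion $\subseteq$, I would take a generator $[\sM,\eta]$ with $\sM$ disconnected and having $\bP$. Write $\sM=\sM_1\oplus\sM'$, where $\sM_1$ is one connected component and $\sM'$ is the direct sum of the remaining components; $\sM'$ is nonempty because $\sM$ is disconnected. Since $\bP$ is inherited by connected components, each component has $\bP$, and then $\sM'$ has $\bP$ by closure under direct sums. As in the proof of Theorem~\ref{thm:connected-generators}, choose orientations $\eta_1,\eta'$ with $\eta=\eta_1\wedge\eta'$. This is possible because of the canonical isomorphism \eqref{eqn:orientation-wedge}, and any sign can be absorbed into $\eta'$. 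Then $[\sM,\eta]=[\sM_1,\eta_1]\star[\sM',\eta']\in\MC^{\bP}_+\star\MC^{\bP}_+$. The only point needing care is that $\sM$ need only be isomorphic to $\sM_1\oplus\sM'$. The isomorphism relation in $\widetilde{\cR}$ handles this: the class is unchanged if we replace $(\sM,\eta)$ by its pushforward along the isomorphism.

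For the \emph{moreover} part: when Lemma~\ref{lem:connected-sub} applies, $\MC^{\bP,\cont}_\bullet$ is by definition $\MC^{\bP}_\bullet/\MC^{\bP,\disc}_\bullet$. In degree $0$, the class $[\varnothing,1]$ is neither connected nor disconnected under the paper's conventions, so I would read the connected quotient in positive degrees, that is, as $\MC^{\bP}_+/\MC^{\bP,\disc}_\bullet$. This uses $\MC^{\bP,\disc}_\bullet\subseteq\MC^{\bP}_+$, which holds since disconnected matroids are nonempty. Substituting the equality just proved gives the identification with the indecomposable quotient. The hypotheses of Lemma~\ref{lem:connected-sub} ensure that $\MC^{\bP,\disc}_\bullet$ is a subcomplex, so the isomorphism is one of complexes for $\partial_{\del}$, respectively $\partial_{\con}$.

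I expect the main obstacle to be bookkeeping rather than mathematics: treating the degree-zero summand correctly, and checking that the component-splitting step really produces factors in $\MC^{\bP}_+$. The latter is exactly where the hypothesis that $\bP$ is inherited by connected components gets used.
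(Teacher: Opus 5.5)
Your proposal is correct and takes essentially the same route as the paper, whose proof simply invokes the decomposition of a matroid into connected components for the equality and then deduces the quotient identification formally; your double-inclusion argument spells out those details. Your degree-zero caveat is a genuine refinement that the paper glosses over: when $\varnothing$ has $\bP$, the connected quotient $\MC^{\bP}_{\bullet}/\MC^{\bP,\disc}_{\bullet}$ retains $[\varnothing,1]$ while the indecomposable quotient does not, so the identification must be read in positive degrees, as you do.
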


\begin{proof}
	The final statement follows immediately from the first. The first follows from the fact that every matroid can be written as a direct sum of connected components. 
\end{proof}

Corollary~\ref{cor:connected-indecomposables} identifies the connected quotient with the indecomposable quotient of the algebra \((\MC_{\bullet}^{\bP},\star)\). This is the matroid analogue of passing from all graphs to connected graphs in graph-complex theory. The point is not that the quotient inherits the full structure from Section~\ref{subsec:dg-hopf-acyclic}, but rather that it isolates the part of the theory not generated by repeated direct sums of smaller objects.

One should emphasize, however, that this is an algebraic quotient rather than a Hopf quotient. In general the subspace $\MC_{\bullet}^{\bP,\disc}$ need not be a co-ideal for $\Delta$, so there is no reason for $\MC_{\bullet}^{\bP,\cont}$ to inherit a compatible coproduct. Instead, Theorem~\ref{thm:connected-generators} shows that the product structure is already controlled by connected matroids, while Lemma~\ref{lem:connected-sub} shows that in favorable settings the homological study may likewise be reduced to connected classes. In this sense, the connected quotient is the natural reduced object to examine when searching for homology not forced by the free super-commutative algebra structure of the full complexes.

\section{Structural Results and Computations}\label{sec:computation}

The results of Section~\ref{sec:algebraic-structure} show that the direct sum product places surprisingly strong constraints on matroid complexes. The formal acyclicity statements of Theorem~\ref{thm:acyclicity-summary} explain why many unreduced complexes vanish, while Theorem~\ref{thm:connected-generators} and Corollary~\ref{cor:connected-indecomposables} show that connected classes are the natural place to look for genuinely new behavior. The goal of this section is therefore concrete: first isolate automatic vanishing phenomena, then choose explicit bases adapted to computation, and finally record the resulting calculations.

\subsection{Vanishing and Support Conditions}

\begin{lemma}\label{lem:orientation-reversing-zero}
Let $\sM$ be a matroid and $\eta$ an orientation on $\sM$. The class $[\sM,\eta]$ vanishes in $\MC_{\bullet}$ if and only if $\sM$ admits an orientation-reversing automorphism, that is, an automorphism $\tau:\sM\to \sM$ such that $\tau_{*}(\eta)=-\eta$.
\end{lemma}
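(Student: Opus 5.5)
The plan is to treat $\MC$ as a space of coinvariants and argue fiber by fiber over isomorphism classes. I will prove the easy direction first, then construct an explicit linear functional to prove the converse.

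\emph{Easy direction.} Suppose $\tau\in\Aut(\sM)$ satisfies $\tau_*(\eta)=-\eta$. The isomorphism relation in $\widetilde{\cR}$ gives $[\sM,\eta]=[\sM,\tau_*(\eta)]=[\sM,-\eta]$. The orientation-reversing relation gives $[\sM,-\eta]=-[\sM,\eta]$. Hence $2[\sM,\eta]=0$, and since we work over $\QQ$, $[\sM,\eta]=0$.

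\emph{Converse, setup.} Suppose no automorphism of $\sM$ reverses $\eta$. I will build a linear functional $\phi:\widetilde{\cM}\to\QQ$ that vanishes on $\widetilde{\cR}$ but satisfies $\phi((\sM,\eta))=1$.

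\begin{enumerate}
\item Choose a small model: among all matroids, consider only those isomorphic to $\sM$. Pairs $(\sM',\eta')$ with $\sM'\not\cong\sM$ are sent to $0$. Since both generators of $\widetilde{\cR}$ involve pairs in a single isomorphism class, this is consistent. (The class of matroids isomorphic to $\sM$ is not a set, but $\widetilde{\cM}$ is implicitly set-sized; one can restrict to matroids on subsets of a fixed universe. I will not dwell on this.)
\item For $(\sM',\eta')$ with $\sM'\cong\sM$, pick any isomorphism $\psi:\sM'\to\sM$. Since the top exterior power is rank one, $\psi_*(\eta')=\pm\eta$. Set $\phi((\sM',\eta'))$ equal to this sign $\pm1$.
\end{enumerate}

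\emph{Converse, well-definedness.} This is the main step. If $\psi$ and $\psi'$ are two isomorphisms $\sM'\to\sM$, then $\psi'\circ\psi^{-1}$ is an automorphism of $\sM$. By hypothesis it does not send $\eta$ to $-\eta$, so it fixes $\eta$, because every automorphism sends $\eta$ to $\pm\eta$. Therefore $\psi'_*(\eta')=\psi_*(\eta')$, and the sign is independent of the choice of isomorphism.

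Note that the hypothesis is stated for the fixed orientation $\eta$, but it is equivalent to the statement for $-\eta$, by linearity of $\tau_*$.

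\emph{Converse, vanishing on $\widetilde{\cR}$.} Linearity of $\psi_*$ gives $\phi((\sM',-\eta'))=-\phi((\sM',\eta'))$. For an isomorphism $\chi:\sM'\to\sM''$, the map $\psi\circ\chi^{-1}$ is an isomorphism $\sM''\to\sM$. Using it shows $\phi((\sM'',\chi_*\eta'))=\phi((\sM',\eta'))$. So $\phi$ kills both kinds of generators of $\widetilde{\cR}$, and it descends to $\MC$.

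\emph{Conclusion.} The identity gives $\phi((\sM,\eta))=1$, so $[\sM,\eta]\neq 0$. The main obstacle is exactly the well-definedness of the sign, which is where the absence of an orientation-reversing automorphism is used. The rest is bookkeeping.
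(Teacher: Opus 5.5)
Your proof is correct. The easy direction is the same as the paper's. The converse takes a different route.

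The paper argues structurally. It first quotients $\widetilde{\cM}$ by the isomorphism relations, which leaves a space freely spanned by isomorphism classes of oriented matroids. It then quotients by orientation reversal and asserts that a class can die only if $(\sM,\eta)$ and $(\sM,-\eta)$ already lie in the same isomorphism orbit.

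You instead build a dual witness: a sign functional $\phi$ on $\widetilde{\cM}$, supported on a single isomorphism class, that kills $\widetilde{\cR}$ and takes the value $1$ on $(\sM,\eta)$. The absence of odd automorphisms is used exactly to make $\phi$ independent of the chosen isomorphism.

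The paper's route describes all of $\MC$ at once (one basis vector per matroid isomorphism class without odd automorphisms). However, it leaves the orbit-by-orbit computation of the second quotient implicit. Your route is local and fully rigorous: it rules out any cancellation through chains of relations without having to describe the quotient.

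It also yields more. The functionals for distinct isomorphism classes have disjoint supports, so the same construction proves the linear-independence half of Proposition~\ref{prop:basis-mc}, which the paper likewise only asserts.
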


\begin{proof}
If $\tau_{*}(\eta)=-\eta$, then in $\MC_{\bullet}$ we have
\[
[\sM,\eta]=[\sM,\tau_{*}(\eta)]=[\sM,-\eta]=- [\sM,\eta],
\]
so $2[\sM,\eta]=0$, and hence $[\sM,\eta]=0$ over $\QQ$. Conversely, first quotient $\widetilde{\cM}$ by the isomorphism relations and then by the orientation-reversal relations. After choosing one oriented representative for each isomorphism class, the only nontrivial way for a class to vanish is for some automorphism of $\sM$ to identify $\eta$ with $-\eta$. This is exactly the existence of an orientation-reversing automorphism.
\end{proof}

\begin{example}\label{exm:uniform-zero}
    Recall from Example~\ref{exm:into-uniform-matroid} the uniform matroid $U_{2,n}$ of rank $2$ on $n$ elements has $E=[n]$ and $\cB=\binom{[n]}{2}$. If $n\geq2$ then the transposition $\tau=(12)$ gives an isomorphism $\tau:U_{2,n}\to U_{2,n}$, which is orientation-reversing. Lemma~\ref{lem:orientation-reversing-zero} implies that $[U_{2,n},\eta]=0$ when $n\geq2$. 
\end{example}

As an immediate application of Lemma~\ref{lem:orientation-reversing-zero} we see that the matroid complex does not see matroids containing parallel or series elements or containing two or more loops or coloops. 

\begin{proposition}\label{prop:vanishing-2loops-parallel}
    Let $\sM$ be a matroid and $\eta$ an orientation. 
    \begin{enumerate}
        \item If $\sM$ has parallel elements then $[\sM,\eta]=0$.
        \item If $\sM$ has two or more loops then $[\sM,\eta]=0$.
        \item If $\sM$ has series elements then $[\sM,\eta]=0$.
        \item If $\sM$ has two or more coloops then $[\sM,\eta]=0$.
    \end{enumerate}
\end{proposition}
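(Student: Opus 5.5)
The plan is to apply Lemma~\ref{lem:orientation-reversing-zero} in each case. That means exhibiting a transposition $\tau=(x\,y)$ of $E(\sM)$ that is a matroid automorphism. Any transposition acts on a top exterior power by $\tau_*(\eta)=-\eta$. Writing $\eta=\pm x\wedge y\wedge\alpha$ with $\alpha$ not involving $x,y$, we get $\tau_*(\eta)=\pm y\wedge x\wedge\alpha=-\eta$. So each case reduces to checking that swapping the two chosen elements preserves the set of bases.

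For (1) and (2), I would use a single observation. If $x,y$ are parallel, or are both loops, then for every basis $B$ the set $\tau(B)$ is again a basis.

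In the loop case no basis contains $x$ or $y$, so $\tau(B)=B$.

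In the parallel case, $\{x,y\}$ is a circuit, so no basis contains both elements. If $B$ contains neither, then $\tau(B)=B$. Otherwise suppose $x\in B$ and $y\notin B$, and set $B'=(B\setminus x)\cup y$. I must show $B'$ is independent. If it were dependent, it would contain a circuit $C$ with $y\in C$ and $x\notin C$. Applying circuit elimination to $C$ and $\{x,y\}$ at $y$ yields a circuit inside $(C\setminus y)\cup\{x\}\subseteq B$, a contradiction. Hence $B'$ is independent of size $\rk(\sM)$, so it is a basis. Alternatively, I can cite the standard fact from \cite{oxley11} that parallel elements are interchangeable, via the closure or rank function. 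This is the only step needing a genuine matroid argument, and I expect it to be the main obstacle, though a mild one.

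For (3) and (4) I would dualize rather than argue directly. The class $[\sM,\eta]$ vanishes if and only if some automorphism of $\sM$ reverses $\eta$. Since $\Aut(\sM)=\Aut(\sM^*)$ as permutation groups of the common ground set, this holds if and only if $[\sM^*,\eta]$ vanishes. This is consistent with the isomorphism of Proposition~\ref{prop:duality}. Coloops of $\sM$ are loops of $\sM^*$, and series pairs of $\sM$ are exactly the parallel pairs of $\sM^*$. The latter holds because $x,y$ lie in series precisely when every basis of $\sM$ meets $\{x,y\}$ minimally, that is, when $\{x,y\}$ is a cocircuit, i.e.\ a circuit of $\sM^*$. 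Thus (3) follows from (1) applied to $\sM^*$, and (4) follows from (2) applied to $\sM^*$. The series case can also be checked directly: the same transposition preserves bases by applying the parallel-case argument to complements of bases.
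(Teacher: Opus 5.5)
Your proposal is correct and follows the paper's proof essentially step for step. Both show that the transposition $\tau_{x,y}$ is a matroid automorphism: trivially when $x,y$ are loops, and by circuit elimination against $\{x,y\}$ in the parallel case. Lemma~\ref{lem:orientation-reversing-zero} then gives (1) and (2), and (3) and (4) follow by duality, since series pairs and coloops of $\sM$ are parallel pairs and loops of $\sM^*$. Your extra remarks, that $\Aut(\sM)=\Aut(\sM^*)$ and that series pairs are exactly two-element cocircuits, spell out the duality step the paper states in one line.
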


\begin{proof}
    Statements (3) and (4) concerning series elements and coloops follow from those in (1) and (2) since duality gives an isomorphism that takes parallel elements to series elements and loops to coloops. For (1) suppose that $\sM$ is a matroid and $x,y \in E(\sM)$ are parallel elements, so $\{x,y\}$ is a circuit. Let $\tau_{x,y}:E(\sM)\to E(\sM)$ be the transposition swapping $x$ and $y$ and fixing all other elements. We claim that $\tau_{x,y}$ is a matroid automorphism, which amounts to checking it takes bases to bases. Let $B \in \cB(\sM)$.  Since $\{x,y\}$ is a circuit, $B$ cannot contain both $x$ and $y$. If $B\cap\{x,y\}=\varnothing$, then $\tau_{x,y}(B)=B$. If $x\in B$ and $y\not \in B$, then $(B\setminus\{x\})\cup\{y\}$ has the same cardinality as $B$. If it were dependent, it would contain a circuit $C$ with $y\in C$. Applying circuit elimination to $C$ and the circuit $\{x,y\}$ at the element $y$ produces a circuit contained in $(C\setminus\{y\})\cup\{x\}\subseteq B$, contradicting that $B$ is a basis. Hence $(B\setminus\{x\})\cup\{y\}$ is independent, and therefore a basis. The case $y\in B$ and $x\notin B$ is symmetric. Thus $\tau_{x,y}$ preserves bases and is an automorphism of $\sM$. Since $\tau_{x,y}$ is a transposition, the induced map on $\bigwedge^{|E(\sM)|}\ZZ\langle E(\sM)\rangle$ is multiplication by $-1$. Therefore $(\tau_{x,y})_{*}(\eta)=-\eta$, and Lemma~\ref{lem:orientation-reversing-zero} implies that $[\sM,\eta]=0$.

For (2), suppose that $x,y\in E(\sM)$ are two loops. Then no basis of $\sM$ contains either $x$ or $y$, so the same transposition $\tau_{x,y}$ preserves every basis and hence is a matroid automorphism. Again $(\tau_{x,y})_{*}(\eta)=-\eta$, and Lemma~\ref{lem:orientation-reversing-zero} gives $[\sM,\eta]=0$.
\end{proof}

Note that since a matroid is simple if and only if it has no loops and no parallel elements, Proposition~\ref{prop:vanishing-2loops-parallel} implies that the simple and loopless subcomplexes are isomorphic. Similarly, it shows the cosimple and coloopless subcomplexes are isomorphic, and all rank-two matroids (regardless of ground set size) represent trivial classes in $\MC$. 

\begin{corollary}\label{cor-rank-2-trivial}
The rank-$2$ piece of the matroid complex is trivial, i.e., $\MC_{n-2,2}=0$ for all $n$.
\end{corollary}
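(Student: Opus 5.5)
The plan is to show that every rank-$2$ matroid $\sM$ on $n$ elements admits an orientation-reversing automorphism. Lemma~\ref{lem:orientation-reversing-zero} then gives $[\sM,\eta]=0$ for every orientation $\eta$, hence $\MC_{n-2,2}=0$. Proposition~\ref{prop:vanishing-2loops-parallel} already handles most cases, so the real work is identifying what survives it.

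First, suppose $\sM$ has two or more loops or a pair of parallel elements. Then $[\sM,\eta]=0$ by parts (1) and (2) of Proposition~\ref{prop:vanishing-2loops-parallel}. We may therefore assume $\sM$ has at most one loop $\ell$ and that $\sM\setminus \ell$ (or $\sM$ itself, if there is no loop) is simple of rank $2$.

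Next, classify simple rank-$2$ matroids. In such a matroid every pair of elements is independent, since there are no circuits of size $\le 2$. Every pair therefore extends to a basis, and because bases have size $2$, every $2$-subset is a basis. Hence the simple part is $U_{2,m}$ with $m\ge 2$ (rank $2$ forces $m\geq 2$), and $\sM$ is either $U_{2,m}$ or $U_{2,m}\oplus U_{0,1}$.

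Finally, exhibit the automorphism. Take the transposition $\tau$ of two elements of the $U_{2,m}$ summand, extended by the identity on the loop if there is one. It preserves bases because every $2$-subset of the uniform part is a basis and the loop lies in no basis. Being a transposition, $\tau$ acts by $-1$ on $\bigwedge^{n}\ZZ\langle E\rangle$, so it is orientation-reversing. This is exactly Example~\ref{exm:uniform-zero}, with the loop case being the same check.

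The only step needing care is the classification in the second step. Its key fact is that in rank $2$ every independent pair is a basis; once that is in hand, the rest is routine.
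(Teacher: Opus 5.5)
Your proposal is correct and follows essentially the same route as the paper: you dispose of parallel elements and repeated loops via Proposition~\ref{prop:vanishing-2loops-parallel}, observe that after removing at most one loop the matroid is simple of rank $2$ and hence uniform, and conclude with the transposition from Example~\ref{exm:uniform-zero}. Your treatment of the single-loop case is a bit more explicit than the paper's: you write $\sM\cong U_{2,m}\oplus U_{0,1}$ and extend the transposition by the identity on the loop.
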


\begin{proof}
Let $\sM$ be a rank-$2$ matroid. If $\sM$ is not simple, then either it has parallel elements or a loop. Parallel elements are handled by Proposition~\ref{prop:vanishing-2loops-parallel}. If $\sM$ has a loop, then either it has two loops, or deleting the unique loop leaves a loopless rank-$2$ matroid that is either uniform or still contains a dependent two-element set; in each case one obtains an orientation-reversing automorphism or parallel elements, so the class vanishes. If $\sM$ is simple, then every two-element subset of $E(\sM)$ is independent, hence every two-element subset is a basis. Therefore $\sM\cong U_{2,n}$, and Example~\ref{exm:uniform-zero} applies.
\end{proof}

Going forward we focus our attention primarily on the deletion complex as our computations are built around this case. With this in mind the next observation is the one most directly relevant for computation: a nonzero basis element that is a cycle in the deletion complex must already be supported on simple matroids.

\begin{lemma}\label{lem:ker-implies-loopless}
If $[\sM,\eta]\neq 0$ and $[\sM,\eta]\in \ker(\partial_{\del})$, then $\sM$ is simple.
\end{lemma}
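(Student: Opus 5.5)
The plan is to reduce to the single problematic case using Proposition~\ref{prop:vanishing-2loops-parallel}, and then to separate the terms of $\partial_{\del}[\sM,\eta]$ by counting loops.

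\emph{Reduction.} Suppose $[\sM,\eta]\neq 0$. By Proposition~\ref{prop:vanishing-2loops-parallel}(1) and (2), $\sM$ has no parallel elements and at most one loop. So if $\sM$ is not simple, it has exactly one loop $\ell$, and we can write $\sM=\sM'\oplus U_{0,1}$ with $E(U_{0,1})=\{\ell\}$ and $\sM'=\sM\setminus\ell$ loopless. Choose orientations with $\eta=\eta'\wedge\ell$, so that $[\sM,\eta]=[\sM',\eta']\star[U_{0,1},\ell]$. It then suffices to show $\partial_{\del}[\sM,\eta]\neq 0$ in this case.

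\emph{Computing the boundary.} By Lemma~\ref{lem:direct-sum-derivation},
\[
\partial_{\del}[\sM,\eta]=\partial_{\del}[\sM',\eta']\star[U_{0,1},\ell]\;\pm\;[\sM',\eta']\star\partial_{\del}[U_{0,1},\ell].
\]
Since $\ell$ is a loop, it is not a coloop, so $\partial_{\del}[U_{0,1},\ell]=[\varnothing,1]$. The second term is therefore $\pm[\sM',\eta']$.

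In the first term, each summand is $[(\sM'\setminus x)\oplus U_{0,1},\,\cdot\,]$ with $x$ ranging over the non-coloops of $\sM'$. Deletion does not create loops, so each such summand has exactly one loop. The second term $[\sM',\eta']$ has no loops.

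\emph{Separating by loop count.} The number of loops is an isomorphism invariant, so $\MC_\bullet$ splits as a direct sum of the subspaces spanned by classes with a fixed number of loops. Projecting $\partial_{\del}[\sM,\eta]$ onto the loop-free summand leaves exactly $\pm[\sM',\eta']$. Alternatively, one can phrase this through Theorem~\ref{thm:connected-generators}: $[U_{0,1},\ell]$ is a free odd generator, and the expression has a nonzero component of $[U_{0,1},\ell]$-degree zero.

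\emph{Nonvanishing of $[\sM',\eta']$.} If $[\sM',\eta']=0$, Lemma~\ref{lem:orientation-reversing-zero} gives an orientation-reversing automorphism $\tau$ of $\sM'$. Then $\tau\oplus\id_{\ell}$ is an automorphism of $\sM$ with $(\tau\oplus\id_{\ell})_*(\eta'\wedge\ell)=-\eta$, which would force $[\sM,\eta]=0$, a contradiction. Hence $\partial_{\del}[\sM,\eta]\neq0$, contrary to the hypothesis, so $\sM$ must be simple.

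The main point needing care is the loop-count splitting. One must check that both kinds of relations generating $\widetilde{\cR}$ are homogeneous for the number of loops, so the decomposition descends from $\widetilde{\cM}$ to $\MC$. This holds because isomorphisms preserve loops and orientation reversal does not change the matroid. The remaining steps follow directly from the results cited above.
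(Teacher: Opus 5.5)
Your proposal is correct and follows essentially the same route as the paper. Both arguments use Proposition~\ref{prop:vanishing-2loops-parallel} to reduce to a single loop $\ell$, note that $[\sM\setminus\ell,\eta\setminus\ell]$ is the only loop-free term of $\partial_{\del}[\sM,\eta]$ and so cannot cancel, and show it is nonzero because an orientation-reversing automorphism of $\sM\setminus\ell$ would extend across $\ell$. Your Leibniz-rule computation and your check that the loop-count splitting descends to $\MC$ make precise the non-cancellation step that the paper states in one line.
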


\begin{proof}
By Proposition~\ref{prop:vanishing-2loops-parallel}, the condition $[\sM,\eta]\neq 0$ rules out parallel elements and shows that $\sM$ has at most one loop. Suppose, for contradiction, $\sM$ has a unique loop $\ell$. Then $\sM\setminus \ell$ is loopless, whereas every deletion $\sM\setminus x$ with $x\neq \ell$  has a loop. Hence the term $[\sM\setminus \ell,\eta\setminus \ell]$ in
\[
\partial_{\del}([\sM,\eta])=\sum_{\substack{x\in E(\sM)\\ x\text{ not a coloop}}}[\sM\setminus x,\eta\setminus x]
\]
cannot cancel with any other summand. Therefore $[\sM\setminus \ell,\eta\setminus \ell]=0$. But any orientation-reversing automorphism of $\sM\setminus \ell$ lifts across the extra loop $\ell$, so Lemma~\ref{lem:orientation-reversing-zero} would imply $[\sM,\eta]=0$, a contradiction.
\end{proof}

\subsection{Computing with Matroid Complexes}

Together, Lemma~\ref{lem:orientation-reversing-zero} and Lemma~\ref{lem:ker-implies-loopless} show that the computation is governed by two simple principles: generators with odd automorphisms vanish, and nonzero deletion cycles can only occur on simple matroids. This is the input for the basis construction below. Let $\omega_n = 1\wedge 2\wedge \cdots \wedge n$, viewed as an orientation on any matroid on the ground set $[n]$. For $i\in [n]$, let $\rho_i:[n]\setminus\{i\}\longrightarrow [n-1]$ be the unique order-preserving bijection. If $\sM$ is a matroid on $[n]$ and $\sM'$ is a matroid on $[n-1]$, define
\[
\mu(\sM\setminus i,\sM') \coloneqq
\begin{cases}
0 & \text{if $\sM\setminus i$ is not isomorphic to $\sM'$,}\\
\sgn(\psi\circ \rho_i^{-1}) & \text{if $\psi:\sM\setminus i\to \sM'$ is an isomorphism.}
\end{cases}
\]
Concretely, $\mu(\sM\setminus i, \sM')$ is the sign of the permutation needed to identify $\sM\setminus i$ with the chosen representative of $\sM'$ on the ground set $[n-1]$. Whenever $\Aut(\sM')\subset A_{n-1}$, this is well defined, since any two such isomorphisms differ by an even automorphism of $\sM'$.

The role of $\mu(\sM\setminus i,\sM')$ is to separate the two sources of signs in the deletion differential. The factor $(-1)^{i-1}$ comes from the interior product $\iota_i(\omega_n)$, while $\mu(\sM\setminus i,\sM')$ records the additional sign needed to relabel the deleted matroid back to the chosen representative on $[n-1]$. Proposition~\ref{prop:basis-mc} shows that, once these two signs are accounted for, the differential matrix is obtained simply by recording which deletions land in which isomorphism classes.

\begin{proposition}\label{prop:basis-mc}
Suppose $\widetilde{B}_n$ is a set of representatives for the isomorphism classes of matroids on $[n]$. The set $B_n \coloneqq \left\{(\sM,\omega_n)  \;\middle|\; \sM \in \widetilde{B}_n \;\; \Aut(\sM)\subset A_n\right\}$, with $\omega_n=1\wedge2\wedge\cdots\wedge n$, is a basis for $\MC_{n}$ and if $(\sM,\omega_n)\in B_n$:
    \[
    \partial_{\del}\left(\left[\sM,\omega_n\right]\right)
    =\sum_{\substack{i \in [n] \\  \text{$i$ not a coloop}}} \;\sum_{(\sM',\omega_{n-1})\in B_{n-1}}
    (-1)^{i-1}\,\mu(\sM\setminus i,\sM')\,[\sM',\omega_{n-1}].
    \]
\end{proposition}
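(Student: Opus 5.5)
The proof splits into a basis statement and a formula for $\partial_{\del}$ in that basis.

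\emph{Spanning.} By the relations in $\widetilde{\cR}$, every generator $[\sM,\eta]$ with $|E(\sM)|=n$ equals, up to sign, $[\sM_0,\omega_n]$ for the representative $\sM_0\in\widetilde{B}_n$ isomorphic to $\sM$. Indeed, pushing forward along any isomorphism $\psi:\sM\to\sM_0$ gives $[\sM,\eta]=[\sM_0,\psi_*\eta]$. Since $\bigwedge^n\ZZ\langle[n]\rangle\cong\ZZ$, we have $\psi_*\eta=\pm\omega_n$. If $\Aut(\sM_0)\not\subset A_n$, an odd automorphism $\tau$ satisfies $\tau_*\omega_n=\sgn(\tau)\,\omega_n=-\omega_n$, so Lemma~\ref{lem:orientation-reversing-zero} gives $[\sM_0,\omega_n]=0$. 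Hence $B_n$ spans $\MC_n$.

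\emph{Linear independence.} I would build a linear functional detecting each element of $B_n$. Fix $\sM_0\in\widetilde{B}_n$ with $\Aut(\sM_0)\subset A_n$. Define $f:\widetilde{\cM}_n\to\QQ$ by
\[
f\bigl((\sM,\eta)\bigr)=\begin{cases}\epsilon & \text{if } \psi:\sM\to\sM_0 \text{ is an isomorphism and } \psi_*\eta=\epsilon\,\omega_n,\\ 0 & \text{if } \sM\not\cong\sM_0.\end{cases}
\]
This is independent of the choice of $\psi$, since two choices differ by an element of $\Aut(\sM_0)$, and even automorphisms fix $\omega_n$.

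Next check that $f$ kills $\widetilde{\cR}$. It is odd in $\eta$, so it kills $(\sM,\eta)+(\sM,-\eta)$. It also kills $(\sM,\eta)-(\sM',\phi_*\eta)$: if $\psi'$ is an isomorphism $\sM'\to\sM_0$, then $\psi'\circ\phi$ is an isomorphism $\sM\to\sM_0$, and $(\psi'\circ\phi)_*\eta=\psi'_*(\phi_*\eta)$ gives the same sign.

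So $f$ descends to $\MC_n$, takes the value $1$ on $[\sM_0,\omega_n]$, and vanishes on every other element of $B_n$. This is where the hypothesis $\Aut\subset A_n$ enters in an essential way, and it is the main point to get right.

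\emph{The differential formula.} By definition,
\[
\partial_{\del}[\sM,\omega_n]=\sum_{i\text{ not a coloop}}[\sM\setminus i,\iota_i\omega_n].
\]
Here $\iota_i\omega_n=(-1)^{i-1}\,1\wedge\cdots\wedge\widehat{i}\wedge\cdots\wedge n$, and $(\rho_i)_*$ sends $1\wedge\cdots\wedge\widehat{i}\wedge\cdots\wedge n$ to $\omega_{n-1}$ because $\rho_i$ is order-preserving.

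Let $\sM'\in\widetilde{B}_{n-1}$ be the representative of $\sM\setminus i$, with isomorphism $\psi:\sM\setminus i\to\sM'$. Then $\psi\circ\rho_i^{-1}$ is a permutation of $[n-1]$, and
\[
\psi_*\bigl(1\wedge\cdots\wedge\widehat{i}\wedge\cdots\wedge n\bigr)=(\psi\circ\rho_i^{-1})_*\,\omega_{n-1}=\sgn(\psi\circ\rho_i^{-1})\,\omega_{n-1}.
\]
Therefore
\[
[\sM\setminus i,\iota_i\omega_n]=(-1)^{i-1}\mu(\sM\setminus i,\sM')\,[\sM',\omega_{n-1}].
\]

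Two cases remain. If $\Aut(\sM')\subset A_{n-1}$, then $\mu$ is well defined and $(\sM',\omega_{n-1})\in B_{n-1}$, so the term appears in the stated sum. Otherwise $[\sM',\omega_{n-1}]=0$ by Lemma~\ref{lem:orientation-reversing-zero}, and the term contributes nothing; correspondingly the sum ranges only over $B_{n-1}$. Summing over the non-coloops $i$ gives the stated formula.
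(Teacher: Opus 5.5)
Your proposal is correct and follows the paper's outline: reduce every generator to $\pm[\sM_0,\omega_n]$, discard representatives with odd automorphisms, and get the differential by combining the interior-product sign $(-1)^{i-1}$ with the relabeling sign $\sgn(\psi\circ\rho_i^{-1})$. Where you go beyond the paper is linear independence. The paper only invokes Lemma~\ref{lem:orientation-reversing-zero}, which shows each basis vector is nonzero but not that vectors from distinct isomorphism classes are independent. Your dual functionals $f_{\sM_0}$ settle this directly and also give a rigorous proof of the converse direction of that lemma.
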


\begin{proof}
Every class in degree $n$ has a representative of the form $(\sM,\omega_n)$ for some matroid on $[n]$. By Lemma~\ref{lem:orientation-reversing-zero}, such a class is nonzero precisely when $\sM$ has no odd automorphism, which proves the basis statement. The formula for the differential is obtained by deleting each $i\in [n]$, keeping track of the sign coming from interior product with respect to $i$, and then rewriting the result in the basis $B_{n-1}$.
\end{proof}

Similar results can be given for the other matroid complexes introduced in this paper; however, the above is the main input for the computations in the next section. 

\subsection{Implementation}
Proposition~\ref{prop:basis-mc} leads to a direct algorithm for constructing the deletion matroid complex from a list of matroid isomorphism classes. For each $n$, we enumerate matroid isomorphism classes on the ground set $[n]$, discard those with odd automorphisms, and use the surviving classes to form the basis $B_n$. For a basis element $[\sM,\omega_n]$, Proposition~\ref{prop:basis-mc} expresses $\partial_{\del}([\sM,\omega_n])$ as a signed sum of basis elements in $B_{n-1}$. In this way one obtains the differential matrices of $(\MC_{\bullet},\partial_{\del})$ bidegree by bidegree. We carried out two rounds of computation, each using different software and data sources. Most computations, including all of those for which we report runtime data, were run on an Ubuntu server with two 48-core 3.6 GHz AMD EPYC CPUs, 1.5 TB of RAM, and two GPUs: NVIDIA A100 80 GB and NVIDIA A40. 

All code and data discussed in this section are available via GitHub:
\begin{center}
\url{https://github.com/julietteBruce/matroidComplex}
\end{center}

\subsection{All Matroids ($n\leq 9$)}

We implemented the algorithm above in \textit{Macaulay2} \cite{M2}, using the \texttt{Matroids} package \cite{chenM2, chen19}, which provides basic matroid functions including deletion, contraction, isomorphism testing, and a database of all matroids, up to isomorphism, on at most nine elements. Some of the results can be seen in Figure~\ref{fig:data1}, which shows the graded pieces of $\MC$ graded by rank and ground set size. To compute combinatorial subcomplexes, we pruned the basis $B_n$ by testing whether the underlying matroid satisfied the desired property. For the simple and loopless subcomplexes we used the \textit{SageMath} database \texttt{matroids.AllMatroids}. For the binary, ternary, and regular subcomplexes we used \textit{OSCAR} and \textit{polymake}, since these properties can be checked efficiently via excluded-minor tests and related algorithms. The resulting lists of representatives were then imported back into \textit{Macaulay2} to assemble the corresponding differential matrices. The dimension tables for these subcomplexes appear in Appendix~\ref{sec:app}.

\begin{remark}
All entries in the above tables were computed in less than a minute, with the exceptions of the $(9,4)$ and $(9,5)$ entries. Computing each of these entries took roughly 6 hours of compute time. The bottleneck for these entries comes from the fact that there are $190{,}214$ matroids on 9 elements of rank 4 and by duality the same number of rank 5. Further, identifying whether a matroid on 9 elements has an odd automorphism becomes challenging as our code for computing matroid automorphisms is a bit naive and requires checking a large number of potential automorphisms. 
\end{remark}

\begin{example}
    The matroid complex $\left(\MC_{\bullet},\partial_{\del} \right)$ in degrees up to $n=7$ is given by
\[
\begin{tikzcd}[row sep = .4em, column sep = 3.3em]
%0 & 1 & 2 & 3 & 4 & 5 & 6 & 7 & \\
\QQ &\arrow[l, swap, "\partial_{\del,0}"] \QQ^{2} & \arrow[l,swap, "\partial_{\del,1}"] \QQ &\arrow[l,swap, "\partial_{\del,2}"] 0 &\arrow[l,swap, "\partial_{\del,3}"] 0 &\arrow[l,swap, "\partial_{\del,4}"] 0 &\arrow[l,swap, "\partial_{\del,5}"] \QQ^{2} & \arrow[l,swap, "\partial_{\del,6}"] \QQ^{18} & \arrow[l,swap, "\partial_{\del,7}"] \cdots
\end{tikzcd}.
\]
In our basis the two interesting nonzero maps $\partial_{\del,1}$ and $\partial_{\del,6}$ can be represented as the matrices below: 
\[
\partial_{\del,1}=\begin{bmatrix}0\\1\end{bmatrix},\quad  \partial_{\del,6}=\begin{bmatrix}1&3&0&4&4&0&3&1&0&0&0&0&0&0&0&0&0&0\\0&0&1&1&3&7&-1&0&1&0&0&0&0&0&0&0&0&0\end{bmatrix}.
\]
\end{example}

\subsection{Simple, Connected, Loopless Regular Matroids ($n\leq 15$)}

Since the number of matroids on 10 or more elements is unknown, extending our computations for the full matroid complex -- and most of the combinatorial subcomplexes -- beyond $n=9$ is presently out of reach. However, the structural results of Section~\ref{sec:algebraic-structure} suggest that the most interesting part of the complex can be isolated; by Corollary~\ref{cor:connected-indecomposables} the connected quotient of any loopless subcomplex identifies with the indecomposable quotient of the corresponding algebra, so connected matroids capture precisely the part of the complex not generated by direct sums of smaller objects. In particular, these complexes are most likely to exhibit interesting homology classes.
 
For regular matroids a much larger computation is feasible. The database of Fripertinger and Wild \cite{fripertingerWild} provides a complete list of isomorphism-class representatives for all simple loopless connected (slc) regular matroids through ground-set size $n=15$. These matroids are given via their $\FF_2$-representations.

The key computational bottleneck in computing matroid complexes is determining whether a matroid has an odd automorphism. When $\sM$ is binary, its automorphism group acts on the column space of the representing matrix over $\FF_2$, and this action can be analyzed efficiently using linear algebra over $\FF_2$. For each matroid in the Fripertinger--Wild database we computed the automorphism group by working directly with the $\FF_2$-representation. This approach is substantially more efficient than the naive method used in our full computations, which was the primary bottleneck in the earlier computation.

\begin{figure}[ht]
\centering
\includegraphics[scale=.75]{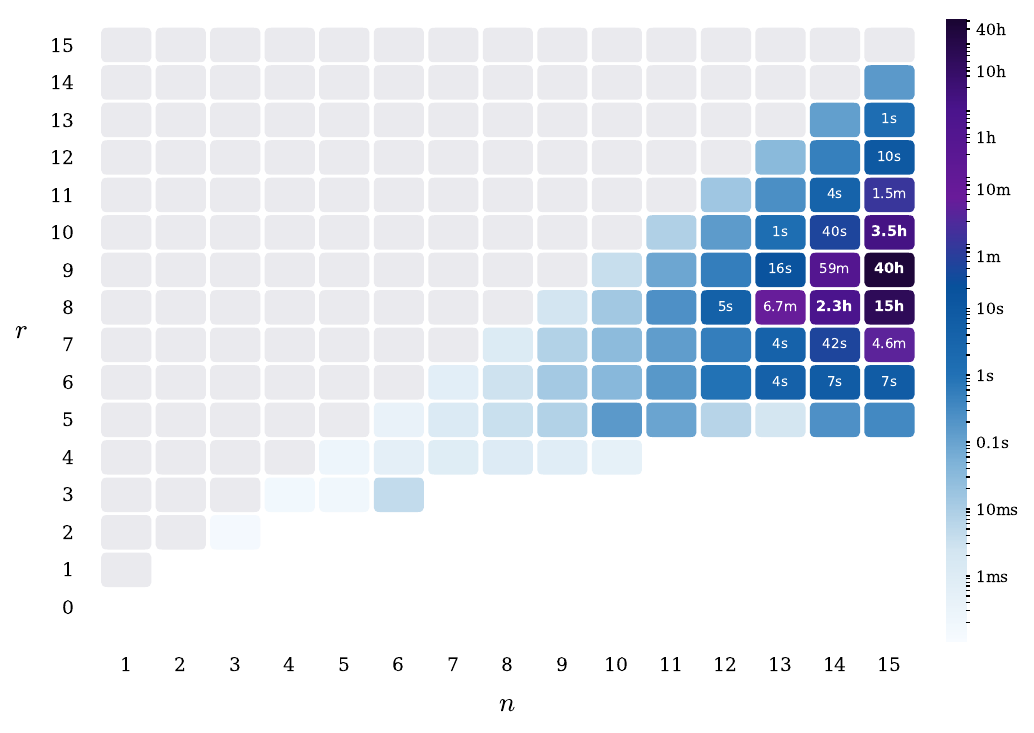}
\caption{CPU time for computing the basis of $\MC^{\reg,\simp,\cont}_{n-r,r}$ at each bidegree $(n,r)$. Empty cells indicate bidegrees with no connected simple loopless regular matroids. The total CPU time was approximately $70$ hours.}
\label{fig:timing}
\end{figure}
 
Using this method, we computed the connected quotient complex
\[
\left(\MC_{\bullet}^{\reg, \simp, \cont}, \partial_{\del}\right)
\]
on simple, loopless, connected regular matroids through ground-set size $n=15$. By Corollary~\ref{cor:connected-indecomposables}, this is the indecomposable quotient of the simple loopless regular matroid deletion subcomplex $(\MC_{\bullet}^{\reg,\simp},\partial_{\del})$. The dimensions of the chain groups in this complex are recorded in Figure~\ref{fig:reg-connected}. Note the symmetry of the table arrises because while the class of simple matroids is not duality closed, the condition of having no odd automorphism forces cosimplicity when $n>1$, which in turn makes the class of simple, loopless, connected regular matroids with no odd automorphism stable under duality. 

\begin{remark}\label{rmk:timing}
The computational profile for constructing the basis elements for this chain complex is recorded in Figure~\ref{fig:timing}. Through $n=12$ the entire computation takes only seconds, but the cost grows rapidly: the $n=14$ row requires roughly $3.3$ hours and $n=15$ requires roughly $60$ hours, for a total of approximately $70$ CPU hours. The bottleneck is concentrated at the middle ranks: the single entry $(n,r)=(15,9)$ accounts for over $40$ hours, reflecting the fact that the Fripertinger--Wild database contains $18,469$ connected simple loopless regular matroids at this bidegree --- the largest in the census. Despite this, the computation remains feasible because the $\FF_2$ linear algebra approach to detecting odd automorphisms is far more efficient than the naive method used in the first round of computations.
\end{remark}

\subsection{Computing Homology}

For the binary, ternary, and regular deletion matroid complexes, the acyclicity results of Section~\ref{subsec:dg-hopf-acyclic} already imply that the homology vanishes, so these computations serve primarily as a consistency check on the implementation. There are still potentially interesting combinatorial properties of these complexes; see \cite{BBC23}, where a virtual analogue of the Euler characteristic of the binary matroid complex is defined and computed.

On the other hand, the homology of the simple and loopless deletion matroid complexes (which are isomorphic by Proposition~\ref{prop:vanishing-2loops-parallel}) is more interesting, since Theorem~\ref{thm:acyclicity-summary} does not apply and we see no reason to expect them to be acyclic. We computed the homology of these subcomplexes through ground-set size $8$ and found they have two nonzero homology groups in this range:
\[
H_{i}\left(\MC_{\bullet}^{\lpl}, \partial_{\del}\right) = H_{i}\left(\MC_{\bullet}^{\simp}, \partial_{\del}\right) = 
\begin{cases}
 \QQ & \text{ if $i=0,1$,} \\
 0 & \text{ for $1 < i \leq 8$}.
\end{cases}
\]
These homology classes are easy to understand. In degree zero the class is generated by $[\varnothing,1]$. The matroid $U_{1,1}$ is simple, loopless, and has no odd automorphism, meaning it represents a nonzero class in  $\MC_{1}^{\lpl} \cong \MC_{1}^{\simp}$. Further, the unique element of $U_{1,1}$ is a coloop, implying that $\partial_{\del}([U_{1,1},\eta])=0$. On the other hand, $\MC_{2}^{\lpl}=\MC_{2}^{\simp}=0$ since there are only four matroids on two elements up to isomorphism: $U_{0,2}$ and $U_{0,1}\oplus U_{1,1}$, which are not loopless (and thus not simple), $U_{1,2}$ which has a parallel element, and $U_{2,2}$ which has rank 2. The classes of these last two matroids are zero by Proposition~\ref{prop:vanishing-2loops-parallel} and Corollary~\ref{cor-rank-2-trivial}. Hence $U_{1,1}$ represents a nontrivial homology class in both subcomplexes. 

The computation of the connected quotient complex $(\MC_{\bullet}^{\reg,\simp,\cont},\partial_{\del})$ through $n=15$ provides a substantial extension of the data in the regular case. By Corollary~\ref{cor:connected-indecomposables}, this connected quotient is precisely the indecomposable quotient of $(\MC_{\bullet}^{\reg,\simp},\partial_{\del})$, so its homology detects all classes not generated by direct sums. Since the property of being regular is minor closed and closed under direct sum with both $U_{0,1}$ and $U_{1,1}$, Theorem~\ref{thm:acyclicity-summary} implies that the full regular deletion complex is acyclic. However, the simple loopless restriction breaks the hypotheses of Theorem~\ref{thm:acyclicity-summary}.

Our computations find four nonzero homology groups of $\MC^{\reg,\simp,\cont}_{\bullet,\bullet}$ for $n\leq 14$ living in degrees:
\[
(n,r)=(1,1), \quad (6,3), \quad (10,5), \quad (14,7).
\]
Each of the corresponding homology groups is one-dimensional. Motivated by this pattern, we identified the matroids whose classes generate these homology groups, in order:
\[
U_{1,1}, \quad \sM(W_{3}), \quad \sM(W_{5}), \quad \sM(W_{7})
\]
where $\sM(G)$ is the graphic matroid of a graph $G$ and $W_{g}$ is the wheel graph of genus $g$ with $2g$ edges. This leads to the following conjecture:

\begin{conjecture}\label{con:reg-homology}
    The homology of the indecomposable quotient of the simple loopless regular matroid complex is generated by the classes of $U_{1,1}$ and $\sM(W_{2k+1})$ for all $k\geq1$. 
\end{conjecture}

Note a direct computation to show that $[\sM(W_{2k+1}),\eta]$ is nonzero in $\MC_{\bullet}$ for any choice of orientation $\eta$. Deletion on $\sM(W_{2k+1})$ corresponds to deleting an edge of $W_{2k+1}$. The graph $W_{2k+1}\setminus e$ has an odd automorphism for any edge $e$, and so $[\sM(W_{2k+1}),\eta]$ is a cycle. However, showing that this is not a boundary seems more difficult, as does showing it is the unique nonzero homology class for $n>1$.

The appearance of the odd wheels is especially suggestive in light of the
graph-complex picture. The classes above, apart from the one-element class
\(U_{1,1}\), lie in the graphic locus of the regular matroid complex. Brown--Chan--Galatius--Payne show that the odd wheel graphs $W_{2k+1}$ give nonzero classes in the degree-zero homology of the graph complex $\GC_{2}^{\vee}$, by showing they pair nontrivially with canonical forms $\omega_{4k+1}$. Further, the odd wheel classes generate a polynomial subalgebra of the $E^1$-page of the homological Quillen spectral sequence \cite{BCGP24}. Conjecture~\ref{con:reg-homology} may therefore be viewed as a matroidal shadow of the odd-wheel phenomenon in graph homology. 
%%%%%%%%%%%%%%%%%%%%%%%%%%%%%%%%%%%%%%%%%%%%%%%%%%%
%%%%%%%%%%%%%%%%%%%%%%%%%%%%%%%%%%%%%%%%%%%%%%%%%%%

%%%%%%%%%%%%%%%%%%%%%%%%%%%%%%%%%%%%%%%%%%%%%%%%%%%
%%%%%%%%%%%%%%%%%%%%%%%%%%%%%%%%%%%%%%%%%%%%%%%%%%%

\bibliographystyle{alphaurl}
\bibliography{main}

\appendix 
\section{Computed Complexes}
\label{sec:app}
The tables in this section record dimensions of chain groups for the regular, binary, ternary, simple, and loopless deletion matroid complexes, together with the simple loopless connected regular matroid complex.

\begin{figure}[ht]
\centering
\scalebox{0.8}{
\matroidtable{%
$9$ & \E & \E & \E & \E & \E & \E & \E & \E & \E & $0$ \\[2pt]
$8$ & \E & \E & \E & \E & \E & \E & \E & \E & $0$ & $0$ \\[2pt]
$7$ & \E & \E & \E & \E & \E & \E & \E & $0$ & $0$ & $0$ \\[2pt]
$6$ & \E & \E & \E & \E & \E & \E & $0$ & $0$ & $0$ & $0$ \\[2pt]
$5$ & \E & \E & \E & \E & \E & $0$ & $0$ & $0$ & $0$ & $0$ \\[2pt]
$4$ & \E & \E & \E & \E & $0$ & $0$ & $0$ & $\QQ$ & $\QQ$ & $0$ \\[2pt]
$3$ & \E & \E & \E & $0$ & $0$ & $0$ & $\QQ$ & $\QQ$ & $0$ & $0$ \\[2pt]
$2$ & \E & \E & $0$ & $0$ & $0$ & $0$ & $0$ & $0$ & $0$ & $0$ \\[2pt]
$1$ & \E & $\QQ$ & $\QQ$ & $0$ & $0$ & $0$ & $0$ & $0$ & $0$ & $0$ \\[2pt]
$0$ & $\QQ$ & $\QQ$ & $0$ & $0$ & $0$ & $0$ & $0$ & $0$ & $0$ & $0$ \\[2pt]
\midrule
 & \bfseries$0$ & \bfseries$1$ & \bfseries$2$ & \bfseries$3$ & \bfseries$4$ & \bfseries$5$ & \bfseries$6$ & \bfseries$7$ & \bfseries$8$ & \bfseries$9$ \\
}}
\caption{Dimensions of the chain groups for the regular matroid subcomplex $\MC^{\reg}_{n-r,r}$ for $0\le r\le n\le 9$. Blue entries indicate bidegrees with no matroids.}
\label{fig:data}
\end{figure}
 
%% ── Binary ───────────────────────────────────────────────────────────────
\begin{figure}[ht]
\centering
\scalebox{0.8}{
\matroidtable{%
$9$ & \E & \E & \E & \E & \E & \E & \E & \E & \E & $0$ \\[2pt]
$8$ & \E & \E & \E & \E & \E & \E & \E & \E & $0$ & $0$ \\[2pt]
$7$ & \E & \E & \E & \E & \E & \E & \E & $0$ & $0$ & $0$ \\[2pt]
$6$ & \E & \E & \E & \E & \E & \E & $0$ & $0$ & $0$ & $0$ \\[2pt]
$5$ & \E & \E & \E & \E & \E & $0$ & $0$ & $0$ & $\QQ$ & $\QQ^{4}$ \\[2pt]
$4$ & \E & \E & \E & \E & $0$ & $0$ & $0$ & $\QQ^{2}$ & $\QQ^{5}$ & $\QQ^{4}$ \\[2pt]
$3$ & \E & \E & \E & $0$ & $0$ & $0$ & $\QQ$ & $\QQ^{2}$ & $\QQ$ & $0$ \\[2pt]
$2$ & \E & \E & $0$ & $0$ & $0$ & $0$ & $0$ & $0$ & $0$ & $0$ \\[2pt]
$1$ & \E & $\QQ$ & $\QQ$ & $0$ & $0$ & $0$ & $0$ & $0$ & $0$ & $0$ \\[2pt]
$0$ & $\QQ$ & $\QQ$ & $0$ & $0$ & $0$ & $0$ & $0$ & $0$ & $0$ & $0$ \\[2pt]
\midrule
 & \bfseries$0$ & \bfseries$1$ & \bfseries$2$ & \bfseries$3$ & \bfseries$4$ & \bfseries$5$ & \bfseries$6$ & \bfseries$7$ & \bfseries$8$ & \bfseries$9$ \\
}}
\caption{Dimensions of the chain groups for the binary matroid subcomplex $\MC^{\FF_{2}}_{n-r,r}$ for $0\le r\le n\le 9$. Blue entries indicate bidegrees with no matroids.}
\label{fig:data2}
\end{figure}
 
%% ── Ternary ──────────────────────────────────────────────────────────────
\begin{figure}[ht]
\centering
\scalebox{0.8}{
\matroidtable{%
$9$ & \E & \E & \E & \E & \E & \E & \E & \E & \E & $0$ \\[2pt]
$8$ & \E & \E & \E & \E & \E & \E & \E & \E & $0$ & $0$ \\[2pt]
$7$ & \E & \E & \E & \E & \E & \E & \E & $0$ & $0$ & $0$ \\[2pt]
$6$ & \E & \E & \E & \E & \E & \E & $0$ & $0$ & $0$ & $0$ \\[2pt]
$5$ & \E & \E & \E & \E & \E & $0$ & $0$ & $0$ & $\QQ^{2}$ & $\QQ^{37}$ \\[2pt]
$4$ & \E & \E & \E & \E & $0$ & $0$ & $0$ & $\QQ^{4}$ & $\QQ^{15}$ & $\QQ^{37}$ \\[2pt]
$3$ & \E & \E & \E & $0$ & $0$ & $0$ & $\QQ^{2}$ & $\QQ^{4}$ & $\QQ^{2}$ & $0$ \\[2pt]
$2$ & \E & \E & $0$ & $0$ & $0$ & $0$ & $0$ & $0$ & $0$ & $0$ \\[2pt]
$1$ & \E & $\QQ$ & $\QQ$ & $0$ & $0$ & $0$ & $0$ & $0$ & $0$ & $0$ \\[2pt]
$0$ & $\QQ$ & $\QQ$ & $0$ & $0$ & $0$ & $0$ & $0$ & $0$ & $0$ & $0$ \\[2pt]
\midrule
 & \bfseries$0$ & \bfseries$1$ & \bfseries$2$ & \bfseries$3$ & \bfseries$4$ & \bfseries$5$ & \bfseries$6$ & \bfseries$7$ & \bfseries$8$ & \bfseries$9$ \\
}}
\caption{Dimensions of the chain groups for the ternary matroid subcomplex $\MC^{\FF_{3}}_{n-r,r}$ for $0\le r\le n\le 9$. Blue entries indicate bidegrees with no matroids.}
\label{fig:data3}
\end{figure}
 
%% ── Simple ───────────────────────────────────────────────────────────────
\begin{figure}[ht]
\centering
\scalebox{0.8}{
\matroidtable{%
$9$ & \E & \E & \E & \E & \E & \E & \E & \E & \E & $0$ \\[2pt]
$8$ & \E & \E & \E & \E & \E & \E & \E & \E & $0$ & $0$ \\[2pt]
$7$ & \E & \E & \E & \E & \E & \E & \E & $0$ & $0$ & $0$ \\[2pt]
$6$ & \E & \E & \E & \E & \E & \E & $0$ & $0$ & $0$ & $\QQ^{131}$ \\[2pt]
$5$ & \E & \E & \E & \E & \E & $0$ & $0$ & $0$ & $\QQ^{21}$ & $\QQ^{174{,}771}$ \\[2pt]
$4$ & \E & \E & \E & \E & $0$ & $0$ & $0$ & $\QQ^{9}$ & $\QQ^{290}$ & $\QQ^{174{,}502}$ \\[2pt]
$3$ & \E & \E & \E & $0$ & $0$ & $0$ & $\QQ^{2}$ & $\QQ^{7}$ & $\QQ^{14}$ & $\QQ^{117}$ \\[2pt]
$2$ & \E & \E & $0$ & $0$ & $0$ & $0$ & $0$ & $0$ & $0$ & $0$ \\[2pt]
$1$ & \E & $\QQ$ & $0$ & $0$ & $0$ & $0$ & $0$ & $0$ & $0$ & $0$ \\[2pt]
$0$ & $\QQ$ & $0$ & $0$ & $0$ & $0$ & $0$ & $0$ & $0$ & $0$ & $0$ \\[2pt]
\midrule
 & \bfseries$0$ & \bfseries$1$ & \bfseries$2$ & \bfseries$3$ & \bfseries$4$ & \bfseries$5$ & \bfseries$6$ & \bfseries$7$ & \bfseries$8$ & \bfseries$9$ \\
}}
\caption{Dimensions of the chain groups for the simple matroid and loopless matroid subcomplexes $\MC^{\simp}_{n-r,r}$ and $\MC^{\lpl}_{n-r,r}$ for $0\le r\le n\le 9$. Note these complexes are isomorphic by Proposition~\ref{prop:vanishing-2loops-parallel}. Blue entries indicate bidegrees with no matroids.}
\label{fig:simple}
\end{figure}

\begin{figure}[ht]
\centering
\scalebox{0.7}{
\begin{tikzpicture}
\node[inner sep=0pt] (tbl) {%
\renewcommand{\arraystretch}{1.5}%
\setlength{\tabcolsep}{4pt}%
\begin{tabular}{r | *{16}{>{\centering\arraybackslash}m{2.0em}}}
%% r=15
$15$ & \E & \E & \E & \E & \E & \E & \E & \E & \E & \E & \E & \E & \E & \E & \E & $0$ \\[2pt]
%% r=14
$14$ & \E & \E & \E & \E & \E & \E & \E & \E & \E & \E & \E & \E & \E & \E & $0$ & $0$ \\[2pt]
%% r=13
$13$ & \E & \E & \E & \E & \E & \E & \E & \E & \E & \E & \E & \E & \E & $0$ & $0$ & $0$ \\[2pt]
%% r=12
$12$ & \E & \E & \E & \E & \E & \E & \E & \E & \E & \E & \E & \E & $0$ & $0$ & $0$ & $0$ \\[2pt]
%% r=11
$11$ & \E & \E & \E & \E & \E & \E & \E & \E & \E & \E & \E & $0$ & $0$ & $0$ & $0$ & $0$ \\[2pt]
%% r=10
$10$ & \E & \E & \E & \E & \E & \E & \E & \E & \E & \E & $0$ & $0$ & $0$ & $0$ & $0$ & $\QQ$ \\[2pt]
%% r=9
$9$ & \E & \E & \E & \E & \E & \E & \E & \E & \E & $0$ & $0$ & $0$ & $0$ & $0$ & $\QQ$ & $\QQ^{18}$ \\[2pt]
%% r=8
$8$ & \E & \E & \E & \E & \E & \E & \E & \E & $0$ & $0$ & $0$ & $0$ & $0$ & $0$ & $\QQ^{21}$ & $\QQ^{247}$ \\[2pt]
%% r=7
$7$ & \E & \E & \E & \E & \E & \E & \E & $0$ & $0$ & $0$ & $0$ & $0$ & $0$ & $\QQ^{12}$ & $\QQ^{98}$ & $\QQ^{247}$ \\[2pt]
%% r=6
$6$ & \E & \E & \E & \E & \E & \E & $0$ & $0$ & $0$ & $0$ & $0$ & $\QQ$ & $\QQ^{5}$ & $\QQ^{12}$ & $\QQ^{21}$ & $\QQ^{18}$ \\[2pt]
%% r=5
$5$ & \E & \E & \E & \E & \E & $0$ & $0$ & $0$ & $0$ & $0$ & $\QQ^{2}$ & $\QQ$ & $0$ & $0$ & $\QQ$ & $\QQ$ \\[2pt]
%% r=4
$4$ & \E & \E & \E & \E & $0$ & $0$ & $0$ & $0$ & $0$ & $0$ & $0$ & $0$ & $0$ & $0$ & $0$ & $0$ \\[2pt]
%% r=3
$3$ & \E & \E & \E & $0$ & $0$ & $0$ & $\QQ$ & $0$ & $0$ & $0$ & $0$ & $0$ & $0$ & $0$ & $0$ & $0$ \\[2pt]
%% r=2
$2$ & \E & \E & $0$ & $0$ & $0$ & $0$ & $0$ & $0$ & $0$ & $0$ & $0$ & $0$ & $0$ & $0$ & $0$ & $0$ \\[2pt]
%% r=1
$1$ & \E & $\QQ$ & $0$ & $0$ & $0$ & $0$ & $0$ & $0$ & $0$ & $0$ & $0$ & $0$ & $0$ & $0$ & $0$ & $0$ \\[2pt]
%% r=0
$0$ & $0$ & $0$ & $0$ & $0$ & $0$ & $0$ & $0$ & $0$ & $0$ & $0$ & $0$ & $0$ & $0$ & $0$ & $0$ & $0$ \\[2pt]
\midrule
 & \bfseries$0$ & \bfseries$1$ & \bfseries$2$ & \bfseries$3$ & \bfseries$4$ & \bfseries$5$ & \bfseries$6$ & \bfseries$7$ & \bfseries$8$ & \bfseries$9$ & \bfseries$10$ & \bfseries$11$ & \bfseries$12$ & \bfseries$13$ & \bfseries$14$ & \bfseries$15$ \\
\end{tabular}%
};
\node[left=10pt of tbl.west, rotate=90, anchor=center, font=\itshape] {$r$};
\node[below=6pt of tbl.south, anchor=center, font=\itshape] {$n$};
\end{tikzpicture}%
}
\caption{Dimensions of the chain groups for the connected quotient of the simple loopless regular deletion matroid subcomplex $\MC^{\reg,\simp,\cont}_{n-r,r}$ for $0\le r\le n\le 15$. By Corollary~\ref{cor:connected-indecomposables}, this is the indecomposable quotient of $(\MC_{\bullet}^{\reg,\simp},\partial_{\del})$. Blue entries indicate bidegrees with no matroids.}
\label{fig:reg-connected}
\end{figure}
%%%%%%%%%%%%%%%%%%%%%%%%%%%%%%%%%%%%%%%%%%%%%%%%%%%
%%%%%%%%%%%%%%%%%%%%%%%%%%%%%%%%%%%%%%%%%%%%%%%%%%%
\clearpage
\end{document}